\newtheorem{thmintro}{Theorem}
\newtheorem{question}[thmintro]{Question}
\newtheorem{theorem}{Theorem}[section]
\newtheorem{corollary}[theorem]{Corollary}
\newtheorem{lemma}[theorem]{Lemma}
\newtheorem{proposition}[theorem]{Proposition}
\theoremstyle{definition}
\newtheorem{construction}[theorem]{Construction}
\newtheorem{definition}[theorem]{Definition}
\newtheorem{example}[theorem]{Example}
\theoremstyle{remark}
\newtheorem{remark}[theorem]{Remark}
\def\g{\mathfrak{g}}
\def\t{\mathfrak{t}}
\def\z{\mathfrak{z}}
\newcommand{\R}{\mathbb{R}}
\newcommand{\Z}{\mathbb{Z}}
\newcommand{\C}{\mathbb{C}}
\newcommand{\N}{\mathbb{N}}
\newcommand{\F}{\mathbb{F}}
\newcommand{\Q}{\mathbb{Q}}
\newcommand{\Hom}{\operatorname{Hom}}
\def\P{\mathcal{P}}
\def\g{\mathfrak{g}}
\def\z{\mathfrak{z}}
\def\u{\mathfrak{u}}
\newcommand{\uf}{\mathfrak{u}}
\def\t{\mathfrak{t}}
\newcommand{\BONE}{\mathds 1}
\newcommand{\co}{\colon\thinspace}
\begin{document}

\title[A stable splitting for spaces of commuting elements]
{A stable splitting for spaces of commuting elements in unitary groups}

\author[A.~Adem]{Alejandro Adem}
\address{Department of Mathematics\\
University of British Columbia, Vancouver BC V6T 1Z2, Canada}
\email{adem@math.ubc.ca}

\author[J.~M.~G\'omez]{Jos\'e Manuel G\'omez}
\address{ Departamento de Matem\'aticas\\
Universidad Nacional de Colombia sede Medell\'in, 
Medell\'in, Colombia }       
\email{jmgomez0@unal.edu.co}

\author[S.~Gritschacher]{Simon Gritschacher}
\address{Department of Mathematics\\
University of Munich, Munich, Germany }       
\email{simon.gritschacher@math.lmu.de}

\begin{abstract}
We prove an analogue of Miller's stable splitting of the unitary group $U(m)$ for spaces of commuting elements in $U(m)$. After inverting $m!$, the space $\Hom(\Z^n,U(m))$ splits stably as a wedge of Thom-like spaces of bundles of commuting varieties over certain partial flag manifolds. Using Steenrod operations we prove that our splitting does not hold integrally. Analogous decompositions for symplectic and orthogonal groups as well as homological results for the one-point compactification of the commuting variety in a Lie algebra are also provided.
\end{abstract}

\maketitle

\section{Introduction}\label{sec:introduction}

\subsection{Statement of results} In \cite{Miller} Miller showed that the unitary group $U(m)$ admits a stable splitting
\[
U(m)_+\simeq \bigvee_{0\leq k \leq m} \textnormal{Gr}_k(\C^m)^{\mathfrak{u}_k}
\]
where
\[
\textnormal{Gr}_k(\C^m)^{\mathfrak{u}_k}=(U(m)/U(m-k))_+\wedge_{U(k)} \mathfrak{u}_k^+
\]
is the Thom space of the adjoint bundle, i.e., of the vector bundle over the Grassmannian $\textnormal{Gr}_k(\C^m)$ associated with the adjoint representation of $U(k)$. In this paper we prove an analogous stable splitting for the space of commuting $n$-tuples in $U(m)$,
\[
\Hom(\Z^n,U(m))=\{(x_1,\dots,x_n)\in U(m)^n\mid x_i x_j=x_jx_i\textnormal{ for all }1\leq i,j\leq n\}\,,
\]
which holds after inverting $m!$. When $n\ge 2$ the spaces $\Hom(\Z^{n},U(m))$ may have torsion 
at primes dividing $m!$ (see \cite{KT} for the case of $SU(m)$). The presence of this torsion makes it necessary to adapt the proof of 
the stable splitting for $U(m)$ to obtain a splitting for $\Hom(\Z^{n},U(m))$ when $n\ge 2$.  

In our splitting the role of the adjoint bundle is played by a bundle of commuting varieties 
over a generalised Grassmannian. We denote by
\[
C_{n}(\uf_{k})=\{(X_{1},\dots,X_{n})\in \uf_{k}^{n} ~~|~~ [X_{i},X_{j}]=0 
\text{ for all }  1\le i,j\le n\}
\]
the space of commuting $n$-tuples in the Lie algebra $\uf_{k}$. When $k>1$ and $n>1$ the commuting 
variety $C_{n}(\uf_{k})$ is not a vector space as it is not closed under addition. The group 
$U(k)$ acts on $C_{n}(\uf_{k})$  diagonally by the adjoint representation, and this action extends to one on the one-point compactification $C_n(\mathfrak{u}_k)^+$. Our splitting is indexed by a certain poset of partitions of $m$ denoted $\mathcal{P}$. The elements $\lambda=(\lambda_a)_{a\in I}\in \mathcal{P}$ are partitions of $m$ into $2^n$ parts indexed by the set of binary sequences $I=\{0,1\}^n$. Given $a\in I$ we write $|a|=\sum_{i=1}^n a(i)$.

\begin{thmintro}\label{thm:main1}
After inverting $m!$ there is a stable splitting for every integer $n\ge 2$,
\[
\Hom(\Z^n,U(m))_{+}\simeq \bigvee_{(\lambda_a)_{a\in I}\in \mathcal{P}} 
\left(U(m)_+\wedge_{\prod_{a\in I} U(\lambda_a)} \bigwedge_{a\in I} C_{|a|}(\mathfrak{u}_{\lambda_a})^+ \right)\, .
\]
\end{thmintro}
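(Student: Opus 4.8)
The plan is to adapt Miller's argument: build a finite filtration of $\Hom(\Z^{n},U(m))$ indexed by the poset $\mathcal{P}$ whose subquotients are the asserted Thom-like spaces, and then split the resulting cofibre sequences after inverting $m!=|W(U(m))|$.

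\textbf{The filtration.} Given a commuting tuple $\vec x=(x_{1},\dots,x_{n})$, set for each $a\in I$
\[
V_{a}(\vec x)=\bigcap_{i:\,a(i)=0}\ker(x_{i}+I)\ \cap\ \bigcap_{i:\,a(i)=1}\operatorname{im}(x_{i}+I)\ \subseteq\ \C^{m}.
\]
Since the $x_{i}$ are commuting unitaries, each $\ker(x_{i}+I)$ is $x_{j}$-invariant and orthogonal to $\operatorname{im}(x_{i}+I)$, so $\C^{m}=\bigoplus_{a\in I}V_{a}(\vec x)$ orthogonally, with $x_{i}|_{V_{a}}=-I$ when $a(i)=0$ and $x_{i}|_{V_{a}}+I$ invertible when $a(i)=1$. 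Then $\lambda(\vec x):=(\dim_{\C}V_{a}(\vec x))_{a\in I}\in\mathcal{P}$; order $\mathcal{P}$ by declaring $\mu<\lambda$ when $\mu$ is obtained from $\lambda$ by moving one unit of mass from some $a\in I$ to an $a'$ gotten by flipping one entry of $a$ from $1$ to $0$ (transitive closure). As $\dim\bigcap_{i\in S}\ker(x_{i}+I)$ is upper semicontinuous in $\vec x$ for each $S\subseteq[n]$, the set $F_{\le\lambda}:=\{\vec x:\lambda(\vec x)\le\lambda\}$ is closed; a linear refinement of $\mathcal{P}$ yields an increasing filtration $X_{0}\subset\dots\subset X_{N}=\Hom(\Z^{n},U(m))$ by closed subspaces with $X_{j}\setminus X_{j-1}=S_{\lambda}:=\{\vec x:\lambda(\vec x)=\lambda\}$.

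\textbf{The subquotients.} Sending $\vec x\in S_{\lambda}$ to the decomposition $(V_{a}(\vec x))_{a\in I}$ realises $S_{\lambda}$ as a bundle over the partial flag manifold $U(m)/\prod_{a\in I}U(\lambda_{a})$; the fibre over the standard flag consists of tuples supported blockwise with $x_{i}|_{V_{a}}$ prescribed as above. On each block $V_{a}$ the Cayley transform $u\mapsto(u-I)(u+I)^{-1}$ is a $U(\lambda_{a})$-conjugation-equivariant diffeomorphism from $\{u\in U(\lambda_{a}):u+I\text{ invertible}\}$ onto $\mathfrak{u}_{\lambda_{a}}$ carrying commuting tuples to commuting tuples, so this fibre is identified with $\prod_{a\in I}C_{|a|}(\mathfrak{u}_{\lambda_{a}})$ and
\[
S_{\lambda}\ \cong\ U(m)\times_{\prod_{a\in I}U(\lambda_{a})}\prod_{a\in I}C_{|a|}(\mathfrak{u}_{\lambda_{a}}).
\]
As $S_{\lambda}$ is open in the compact space $X_{j}$ with closed complement $X_{j-1}$, the quotient $X_{j}/X_{j-1}$ is the one-point compactification of $S_{\lambda}$, which for this bundle over a compact base (all groups in sight being compact, so the relevant maps proper) is exactly
\[
X_{j}/X_{j-1}\ \cong\ U(m)_{+}\wedge_{\prod_{a\in I}U(\lambda_{a})}\bigwedge_{a\in I}C_{|a|}(\mathfrak{u}_{\lambda_{a}})^{+}.
\]

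\textbf{Splitting after inverting $m!$, and the main obstacle.} It remains to split the cofibre sequences $X_{j-1}\to X_{j}\to X_{j}/X_{j-1}$ compatibly once $m!$ is inverted. I would reduce to the maximal torus $T=U(1)^{m}$ with $W=S_{m}$: the same construction filters $\Hom(\Z^{n},T)=(S^{1})^{nm}$, where the filtration splits on the nose and $W$-equivariantly, being the classical stable splitting of a product of circles into spheres — the $\lambda$-piece is the maximal-torus analogue of the formula above, with each $C_{|a|}(\mathfrak{u}_{\lambda_{a}})$ replaced by the vector space $\mathfrak{t}_{\lambda_{a}}^{\oplus|a|}=C_{|a|}(\mathfrak{t}_{\lambda_{a}})$. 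The conjugation map $q\co (U(m)/T)\times_{W}\Hom(\Z^{n},T)\to\Hom(\Z^{n},U(m))$ is filtration preserving, a homeomorphism over the regular locus, a rational homology isomorphism, and on associated graded is blockwise induced up from the inclusion of a maximal torus into $U(\lambda_{a})$; inverting $m!$ should produce a Becker–Gottlieb-type transfer $\tau$ with $q\circ\tau$ multiplication by $|W|$, exhibiting $\Sigma^{\infty}\Hom(\Z^{n},U(m))_{+}$ as a filtered stable retract of $\Sigma^{\infty}\big((U(m)/T)_{+}\wedge_{W}\Hom(\Z^{n},T)_{+}\big)$. The split torus side then forces the target to split, and comparing associated graded pieces (compatibly with the Cayley identifications) picks out exactly the Thom spaces above, proving the theorem. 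I expect the technical heart to be making this transfer/retraction rigorous, since $q$ is only a resolution rather than a fibration; the alternative is to construct stable retractions $X_{j}\to X_{j}/X_{j-1}$ directly à la Miller, which runs into the same $m!$-torsion — torsion which, as the Steenrod-operation argument later in the paper shows, genuinely cannot be removed.
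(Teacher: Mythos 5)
Your filtration, its indexing by $\mathcal{P}$, and the Cayley-transform identification of the strata $S_\lambda$ with bundles of commuting varieties over partial flag manifolds all match the paper's construction (Sections 3.3--3.4, Proposition 3.6, Corollary 3.8), up to a harmless sign convention (the paper strips off the eigenvalue $1$, not $-1$). The one-point compactification argument for the subquotients is also correct (Corollary 3.8). Where the proposal diverges from the paper, and where the honestly flagged gap sits, is the splitting step, and there I would push back on the transfer angle and on two things that are being glossed over.

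First, a Becker--Gottlieb transfer for $q\co (U(m)/T)\times_{\Sigma_m}T^n\to\Hom(\Z^n,U(m))$ is both unavailable and unnecessary. It is unavailable because $q$ is not a fibration: it is one-to-one over the regular locus but has fibres that are quotients of flag manifolds of varying dimension over the singular strata, so there is no dualizable fibre to transfer along. It is unnecessary because Baird's action-map theorem (cited in the paper as \cite[Theorem 3.3]{Baird}, used in Lemma A.4) already shows that $q$ is a $\Z[1/m!]$-homology isomorphism, not merely a rational one; and since both sides are finite CW complexes (Lemma 3.9 / Corollary 3.10), Whitehead's theorem upgrades this to a $\Z[1/m!]$-local stable equivalence outright. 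So after inverting $m!$ you may simply replace $\Hom(\Z^n,U(m))_+$ by $(U(m)/T)_+\wedge_{\Sigma_m}T^n_+$; no retraction argument is needed.

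Second, even granting that replacement, the remaining steps require two inputs that the proposal treats as obvious but are the real content. (a) The claim that the fat wedge filtration of $T^n\cong(S^1)^{nm}$ ``splits on the nose and $W$-equivariantly'' is an invocation of the Crabb--Hubbuck--McCall theorem \cite[Proposition 5.2]{CHM}: one needs a $\Sigma_{nm}$-equivariant stable splitting of the permutation torus, restricted along the diagonal $\Sigma_m\le\Sigma_{nm}$, realized by a map $s_{T^n}\co V^+\wedge(\mathfrak{t}^+)^{\wedge n}\to V^+\wedge T^n_+$. This is not the classical non-equivariant James-type splitting of a product of circles; the equivariance after stabilizing by a permutation representation $V$ is essential and nontrivial. (b) The identification of the $\lambda$-summand of $(U(m)/T)_+\wedge_{\Sigma_m}T^n_+$ with $U(m)_+\wedge_{U(\lambda)}\bigwedge_a C_{|a|}(\mathfrak{u}_{\lambda_a})^+$ is itself only a $\Z[1/m!]$-local equivalence: the comparison map $\phi_a^+\co (U(\lambda_a)/T(\lambda_a))_+\wedge_{\Sigma_{\lambda_a}}(\mathfrak{t}_{\lambda_a}^+)^{\wedge|a|}\to C_{|a|}(\mathfrak{u}_{\lambda_a})^+$ from the torus picture to the Lie-algebra commuting variety is a homology isomorphism only after inverting primes $\le\lambda_a$ (Lemma 4.4 via Lemma A.4). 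Your remark ``compatibly with the Cayley identifications'' hides precisely this step.

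The paper itself does not run the global argument you sketch; it proves the sharper Theorem 4.1, producing for each $\lambda$ a stable section of the collapse $\zeta_\lambda\co F^\lambda\Hom_+\to U(m)_+\wedge_{U(\lambda)}\bigwedge_aC_{|a|}(\mathfrak{u}_{\lambda_a})^+$ directly, by chaining (i) the $\phi_a^+$ approximation, (ii) a lemma (Lemma 4.6) converting an $H$-equivariant stable section of $f\co X\to Y$ into a $G$-equivariant stable section of $\textnormal{id}\wedge f\co G_+\wedge_HX\to G_+\wedge_HY$ via vector-bundle complementation, and (iii) the CHM equivariant torus splitting. Doing it stratum by stratum is what isolates exactly which primes must be inverted for each summand (hence the statement of Theorem 4.1 with $\max\{\lambda_a\mid|a|>1\}$). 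If you want to repair your global argument instead, you would need to (a) cite Baird for the $\Z[1/m!]$-equivalence of $q$ and invoke Whitehead, (b) cite CHM for the equivariant torus splitting, and (c) run a stratum-by-stratum version of Lemma 4.4 to identify each summand. At that point you have essentially reproduced the paper's proof.
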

The stable summands are Thom-like spaces for bundles of commuting varieties. These commuting varieties are irreducible, and in the non-abelian case the bundles that appear are not vector bundles, but rather bundles of infinite subspace arrangements.

Our methods apply to prove an analogous stable splitting for spaces of commuting elements in the compact symplectic group $Sp(m)$ (Theorem \ref{thm:splittingspm}), but not for $SU(m)$. This is because our proof uses a stable splitting of a maximal torus which is equivariant for the Weyl group action, and we do not know if such a splitting exists in the case of $SU(m)$. For orthogonal groups our methods apply; but because the orthogonal groups have abelian subgroups that are not contained in a maximal torus, we obtain a stable splitting only of the path-component of $\Hom(\Z^n,O(m))$ containing the trivial homomorphism (Theorem \ref{thm:splittingsom}).

To put our theorem in context we recall that the first author and F. Cohen \cite{AC} have previously proved a stable splitting of the space $\Hom(\Z^n,G)$ for any compact Lie group $G$. Let $S_n(G)\subseteq \Hom(\Z^n,G)$ be the subspace of those $n$-tuples of commuting elements of which at least one coordinate is the neutral element $1_G\in G$. Then there is a stable splitting

\begin{equation*} \label{eq:acsplitting}
\Hom(\Z^{n},G)_+\simeq  \bigvee_{0\le r\le
n}\left(\bigvee^{\binom{{n}}{{r}}} \Hom(\Z^{r},G)/S_{r}(G)
\right).
\end{equation*}
This splitting holds without inverting primes, but with the exception of the trivial cases and the cases $G=SU(2)$ and $G=SO(3)$, the stable summands have not been identified geometrically. For $G=U(m)$ and with $m!$ inverted, our decomposition is finer than the above, in that there is a stable equivalence
\[
\Hom(\Z^n,U(m))/S_n(U(m))\simeq \bigvee_{(\lambda_a)_{a\in I}\in \mathcal{S}} \left(U(m)_+\wedge_{\prod_{a\in I} U(\lambda_a)} \bigwedge_{a\in I} C_{|a|}(\mathfrak{u}_{\lambda_a})^+ \right)
\]
for a certain subset $\mathcal{S}\subseteq \mathcal{P}$ (Corollary \ref{cor:summandsinAC}).

Miller's stable splitting is modelled on a filtration of $U(m)$ in which the top filtration quotient is $\mathfrak{u}_m^+$. Similarly, our splitting is obtained from a filtration with top filtration quotient $C_n(\mathfrak{u}_m)^+$. The next theorem shows that when $n\geq 2$, certain primes \emph{must} be inverted for the top filtration quotient to split off stably. Using Steenrod operations we prove:

\begin{thmintro} \label{thm:main2}
Let $p$ be a prime and let $n\geq 2$ be an integer. The projection
\[
\Hom(\Z^n,U(p))_+\to C_n(\mathfrak{u}_{p})^+
\]
does not have a stable section up to homotopy at the prime $p$.
\end{thmintro}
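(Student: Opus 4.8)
The plan is to obstruct a stable section in mod $p$ cohomology using the Steenrod algebra $\mathcal{A}_p$. Write $X=\Hom(\Z^n,U(p))_+$, $Y=C_n(\mathfrak{u}_p)^+$ and $\pi\colon X\to Y$ for the projection. A stable section of $\pi$ at $p$ would make $\pi^*\colon H^*(Y;\F_p)\to H^*(X;\F_p)$ a split monomorphism of $\mathcal{A}_p$-modules, hence would yield an $\mathcal{A}_p$-linear retraction $r\colon H^*(X;\F_p)\to H^*(Y;\F_p)$ with $r\pi^*=\mathrm{id}$. Since $C_n(\mathfrak{u}_p)$ is invariant under scaling, $Y$ is the one-point compactification of an open cone; in particular it is connected and $\widetilde H^{\le 1}(Y;\F_p)=0$, so $\widetilde H^*(Y;\F_p)$ is concentrated in degrees $\ge d_0$ for some $d_0\ge 2$. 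Any such retraction $r$ then vanishes on $H^{<d_0}(X;\F_p)$. Hence it is enough to exhibit a class $z\in H^{d}(X;\F_p)$ with $d<d_0$ and an operation $\theta\in\mathcal{A}_p$ such that $\theta(z)\neq 0$ and $\theta(z)\in\pi^*H^*(Y;\F_p)$: writing $\theta(z)=\pi^*(y)$ forces $y\neq 0$, but $r(z)=0$ gives $0=\theta(r(z))=r(\theta(z))=r(\pi^*(y))=y$, a contradiction.

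To set this up I would first describe $Y$ and the image of $\pi^*$ in the relevant range. Splitting off the centre of $\mathfrak{u}_p$ gives $C_n(\mathfrak{u}_p)=\R^n\times C_n(\mathfrak{su}_p)$, so $Y=\Sigma^n\bigl(C_n(\mathfrak{su}_p)^+\bigr)$, and $H^*(C_n(\mathfrak{su}_p)^+;\F_p)$ is computed from the link of the origin in $C_n(\mathfrak{su}_p)$, together with the description of $C_n(\mathfrak{su}_p)$ near its regular part as a bundle over $U(p)/N(T)$ with fibre an open part of the unit sphere of $\mathfrak{t}_0^n$ modulo the free Weyl action; this identifies $d_0$, the top class, and the low-degree classes together with the $\mathcal{A}_p$-operations among them (up to Thom/Wu corrections coming from the commuting-variety bundle). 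For $\operatorname{im}(\pi^*)$ I would use the cofibre sequence $F_+\to X\xrightarrow{\pi}Y$ attached to the filtration of $\Hom(\Z^n,U(p))$ whose top quotient is $Y$ (Theorem~\ref{thm:main1}); the space $F$ is built from products $\prod_{a}U(\lambda_a)$ and commuting varieties of smaller groups, so $H^*(F;\F_p)$ occupies a controlled range of degrees, the connecting homomorphism is computable there, and $\operatorname{im}(\pi^*)$ is pinned down in the degrees we need.

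The substantive step is producing the pair $(z,\theta)$. The mod $p$ cohomology of $\Hom(\Z^n,U(p))$ genuinely has $p$-torsion when $n\ge 2$, paralleling the case of $SU(p)$ (cf.\ \cite{KT}), and I would compute it as an $\mathcal{A}_p$-module in low degrees, for instance through the Serre spectral sequence of the bundle over the base of the regular stratum, or via an equivariant/Eilenberg--Moore argument using that $\Hom(\Z^n,U(p))$ is, away from the non-regular locus, a Weyl-group quotient of $U(p)/T\times T^n$. The claim is that a suitable $p$-torsion class $z$ of degree $<d_0$, arising from the maximal torus and the Weyl group action, is carried by $P^1$ (equivalently $Sq^2$ when $p=2$), possibly pre- or post-composed with a Bockstein, to a nonzero element of $\operatorname{im}(\pi^*)$, namely $\pi^*$ of a low-dimensional class of $Y$ --- such a class is indecomposable in $H^*(Y;\F_p)$ for degree reasons (and, one degree above the bottom, by a Wu-type vanishing for the commuting-variety bundle), but it acquires a witness to $\mathcal{A}_p$-decomposability after pulling back to $X$. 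The main obstacle is precisely this computation: owing to the $p$-torsion, $H^*(\Hom(\Z^n,U(p));\F_p)$ is an intricate $\mathcal{A}_p$-module, and one must pin down the Steenrod operation on this specific torsion class sharply enough both to see that the image is nonzero and to see that it lies in $\operatorname{im}(\pi^*)$.
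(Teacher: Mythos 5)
Your obstruction-theoretic scaffolding is the same as the paper's. The paper produces a class $a_{n,s}\in H^n(\Hom(\Z^n,U(p^s))_+;\F_p)$ and a class $b_{n,s}\in H^{n+2(p^s-1)}(C_n(\mathfrak{u}_{p^s})^+;\F_p)$ with $\zeta^*(b_{n,s})=(\mathscr{P}^{p^{s-1}}\cdots\mathscr{P}^1)(a_{n,s})$, and then concludes exactly as you do: a stable section $\sigma$ would force $b_{n,s}=\theta(\sigma^*(a_{n,s}))=0$ because $\sigma^*(a_{n,s})$ lives in $H^n$ of the $n$-connected target. So your reduction ``find $z$ below the connectivity of $Y$ and $\theta$ with $\theta(z)\neq0$ in $\operatorname{im}(\pi^*)$'' is precisely Lemma~\ref{lem:nonsplitting} (with $z=a_{n,s}$, $\theta=\mathscr{P}^{p^{s-1}}\cdots\mathscr{P}^1$, $\pi^*(y)=\zeta^*(b_{n,s})$). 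Note in passing that you only argued $d_0\ge 2$; you need $d_0>n$, which does follow from the splitting $Y\cong\Sigma^n C_n(\mathfrak{su}_p)^+$ you already observed.

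The genuine gap is that you do not actually produce the pair $(z,\theta)$, and you say so yourself (``the main obstacle is precisely this computation''). The routes you propose for this step are very different from the paper's and, as stated, are unlikely to go through. You plan to compute $H^*(\Hom(\Z^n,U(p));\F_p)$ as an $\mathcal{A}_p$-module in low degrees, together with a sharp description of $\operatorname{im}(\pi^*)$ via the long exact sequence of the filtration. No such computation is used or needed in the paper: in fact the mod $p$ cohomology of $\Hom(\Z^n,U(p))$ is mentioned in the introduction as still unknown. The idea the paper relies on, which your proposal is missing, is to \emph{route the Steenrod operation through symmetric products}: the moduli-space maps give a commutative square
\[
\xymatrix{
\Hom(\Z^n,U(p)) \ar[r] \ar[d]_{\zeta} & \textnormal{SP}^{p}(S^n) \ar[d] \\
C_n(\mathfrak{u}_{p})^+ \ar[r] & \overline{\textnormal{SP}}^{p}(S^n),
}
\]
and Nakaoka's computation of $H^*(\textnormal{SP}^{p}(S^n);\F_p)$ hands you, for free, a degree $n$ class $u_n$ and a class $v_n$ in the reduced symmetric product with $\bar\zeta^*(v_n)=(\mathscr{P}^{p^{s-1}}\cdots\mathscr{P}^1)(u_n)$. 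Pulling back defines $z$, $\theta$ and $y$ with no knowledge of $H^*(\Hom(\Z^n,U(p));\F_p)$ required; the only nontrivial input is then showing $b_{n,s}\neq 0$, which the paper does by reducing to $(n,s)=(2,1)$ and proving that $\bar\pi_2\colon C_2(\mathfrak{u}_p)^+\to\overline{\textnormal{SP}}^p(S^2)\cong S^{2p}$ is an isomorphism on $H^{2p}$, via the mod $p$ Euler class of the sphere bundle $SU(p)/S\times_{\Sigma_p}S(\mathfrak{s}^2)\to U(p)/N$ and the Guerra--Jana description of $H^*(U(p)/N;\F_p)$. None of that computation is present in your proposal, and the alternatives you suggest (Serre or Eilenberg--Moore spectral sequences for the regular stratum, or reading off $\operatorname{im}(\pi^*)$ from the filtration) would be substantially harder and are not carried out. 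As it stands, the proposal is a correct strategy template with the central computation outstanding.
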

In contrast, it should be noted that Crabb \cite{Crabb} and Baird, Jeffrey and Selick \cite{BJS} have obtained a stable decomposition of $\Hom(\Z^n,SU(2))$ in which the space $C_n(\mathfrak{su}_2)^+$ appears as a stable summand.\footnote{In Crabb's notation this is $C(T,\emptyset)$ with $|T|=n$.} In fact, this decomposition coincides with the aforementioned one of Adem and Cohen applied to $G=SU(2)$. Our splitting of $\Hom(\Z^n,Sp(m))$ applied to $Sp(1)=SU(2)$ reduces to theirs, but an ad-hoc argument is necessary to see that this splitting holds without inverting $2$.

A motivation for proving the stable splitting lies in the effort of computing the homology of $\Hom(\Z^n,U(m))$ which is still unknown. The homology of $C_n(\mathfrak{u}_m)^+$ appears much more tractable than that of $\Hom(\Z^n,U(m))$, although explicit computations are difficult even when $m$ is small. Unfortunately, inverting $m!$ has the effect of killing all torsion in $\Hom(\Z^n,U(m))$. But even if Theorem \ref{thm:main1} fails to hold without inverting primes, it remains to be determined if it holds at the level of integral homology. Preliminary calculations indicate that this is the case when $n=2$ and $m=2,3$.
\begin{question}
Is there an isomorphism
\[
\tilde{H}_{*}(\Hom(\Z^n,U(m))_{+};\Z)\cong \bigoplus_{(\lambda_a)_{a\in I}\in \mathcal{P}} 
\tilde{H}_{*}\left(U(m)_+\wedge_{\prod_{a\in I}U(\lambda_a)} \bigwedge_{a\in I} C_{|a|}(\mathfrak{u}_{\lambda_a})^+;\Z\right)? 
\]
\end{question}
This question may be related to the Steenrod splitting of the homology of symmetric products \cite[Section 22]{Steenrod}; indeed, the quotient of $C_n(\mathfrak{u}_m)^+$ by the adjoint action of $U(m)$ is a symmetric smash product of spheres, and a key step in the proof of Theorem \ref{thm:main2} is the comparison of $\Hom(\Z^n,U(p))$ with a symmetric product of spheres. In fact, the moduli space of representations, i.e., the quotient of $\Hom(\Z^n,U(m))$ by the conjugation action of $U(m)$ is the $m$-fold symmetric product of the $n$-torus, $\textnormal{SP}^m((S^1)^n)$. Rationally then, the stable factors in our splitting correspond precisely to those in a rational stable splitting of the symmetric product.

This article has an appendix in which we record basic results about the compactified commuting variety $C_n(\mathfrak{g})^+$ where $\mathfrak{g}$ is the Lie algebra of a compact connected Lie group $G$. These results are used frequently throughout the earlier sections, but we have decided to concentrate them in the appendix which is self-contained and may be of independent interest. Amongst other results we will prove the following:

\begin{thmintro} \label{thm:main3}
Let $G$ be a compact connected Lie group of rank $r\geq 1$ and dimension $d$, and let $W$ denote the Weyl group of $G$. For every $n\geq 1$, $C_{n}(\mathfrak{g})^+$ is a $\Z[1/|W|]$-homology sphere of dimension $nr$ if $n$ is even, and of dimension $d+(n-1)r$ if $n$ is odd.
\end{thmintro}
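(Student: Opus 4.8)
The plan is to exhibit $C_n(\mathfrak{g})$ as the image of the total space of a vector bundle under a proper map with acyclic fibres, and to run the homology computation on the bundle side. Fix a maximal torus $T\subseteq G$ with Lie algebra $\mathfrak{t}$ and put $W=N_G(T)/T$; all (co)homology below is taken with $\Z[1/|W|]$-coefficients unless indicated otherwise. Since every abelian subalgebra of the compact Lie algebra $\mathfrak{g}$ lies in the Lie algebra of a maximal torus, the conjugation map
\[
\phi\co G\times_{N_G(T)}\mathfrak{t}^{n}\longrightarrow C_{n}(\mathfrak{g}),\qquad [g,(\xi_{1},\dots,\xi_{n})]\longmapsto (\mathrm{Ad}(g)\xi_{1},\dots,\mathrm{Ad}(g)\xi_{n}),
\]
is well defined and surjective, and it is proper since $|\mathrm{Ad}(g)\xi_{i}|=|\xi_{i}|$ for a bi-invariant norm on $\mathfrak{g}$. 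The first task is to compute its fibres. The fibre over a commuting tuple $(X_{1},\dots,X_{n})$ is the set of maximal tori of $\mathfrak{g}$ containing all the $X_{i}$; since at least one maximal torus of $G$ centralises the $X_{i}$, the connected centraliser $Z:=Z_{G}(X_{1},\dots,X_{n})^{\circ}$ has rank $r$, and those maximal tori are exactly the Cartan subalgebras of $Z$. Hence the fibre is $Z/N_{Z}(T_{Z})$, which is connected and, by the standard description of $H^{*}(Z/T_{Z})$ as a $W_{Z}$-module, is $\Z[1/|W_{Z}|]$-acyclic; as $W_{Z}$ is a reflection subgroup of $W$ we conclude that every fibre of $\phi$ is a connected $\Z[1/|W|]$-acyclic space.

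Passing to one-point compactifications, $\phi$ extends to a closed surjection $\phi^{+}\co (G\times_{N_G(T)}\mathfrak{t}^{n})^{+}\to C_{n}(\mathfrak{g})^{+}$ of compact spaces, all of whose fibres are connected and $\Z[1/|W|]$-acyclic (the fibre over the point at infinity being a point). The Vietoris--Begle mapping theorem then implies that $\phi^{+}$ induces an isomorphism on $\Z[1/|W|]$-homology (equivalently, $R\phi^{+}_{*}$ of the constant sheaf $\Z[1/|W|]$ is again constant). It therefore suffices to compute $\tilde H_{*}\bigl((G\times_{N_G(T)}\mathfrak{t}^{n})^{+};\Z[1/|W|]\bigr)$.

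Now $G\times_{N_G(T)}\mathfrak{t}^{n}$ is the total space of the rank-$nr$ vector bundle $V$ over $G/N_G(T)$ associated to the diagonal $W$-action on $\mathfrak{t}^{n}$, so $(G\times_{N_G(T)}\mathfrak{t}^{n})^{+}$ is the Thom space of $V$ and we apply the Thom isomorphism. The orientation character of $V$ is $w\mapsto\det(w|_{\mathfrak{t}})^{n}$. If $n$ is even this is trivial, so $V$ is $\Z[1/|W|]$-orientable and
\[
\tilde H_{*}(\operatorname{Th}V)\cong H_{*-nr}(G/N_G(T);\Z[1/|W|])\cong H_{*-nr}(G/T;\Z[1/|W|])^{W};
\]
since the coinvariant algebra $H^{*}(G/T;\Q)$ carries the regular representation of $W$, in which the trivial representation occurs only in degree $0$, this is $\Z[1/|W|]$ concentrated in degree $nr$. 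If $n$ is odd the orientation character is the sign character $\varepsilon$ of $W$, and the twisted Thom isomorphism identifies $\tilde H_{*}(\operatorname{Th}V)$ with the $\varepsilon$-isotypic part of $H_{*-nr}(G/T;\Z[1/|W|])$; since $\varepsilon$ occurs in the coinvariant algebra only in the top degree $d-r$, this is $\Z[1/|W|]$ concentrated in degree $nr+(d-r)=d+(n-1)r$. In both cases the passage from $\Q$ to $\Z[1/|W|]$ is harmless because the relevant isotypic component is a rank-one direct summand of a free module over the principal ideal domain $\Z[1/|W|]$. Combining the two cases with the isomorphism of the previous paragraph yields the claimed reduced homology of $C_{n}(\mathfrak{g})^{+}$.

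The main obstacle is the fibre analysis of the first paragraph: identifying $\phi^{-1}(X_{1},\dots,X_{n})$ with the variety of Cartan subalgebras of the centraliser $Z$ and verifying that $Z/N_{Z}(T_{Z})$ is acyclic after inverting only $|W|$, together with the bookkeeping of the orientation local system in the odd case. Once these are in place, the two Thom-isomorphism computations and the appeal to Vietoris--Begle are routine, given Borel's description of $H^{*}(G/T)$ as a $W$-module.
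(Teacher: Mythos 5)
Your proof is correct and follows essentially the same route as the paper: conjugate the bundle $G\times_{N_G(T)}\mathfrak{t}^n\to G/N_G(T)$ onto $C_n(\mathfrak{g})$, show the resulting map is a $\Z[1/|W|]$-homology isomorphism, then read off the cohomology of the Thom space from the regular-representation structure of $H^*(G/T)$. The only cosmetic differences are that you supply the Vietoris--Begle fibre analysis directly where the paper cites Baird's Theorem~3.3, and that you package the even/odd bookkeeping as a (twisted) Thom isomorphism whereas the paper computes $W$-invariants of $H^*(G/T\times(\mathfrak{t}^n)^+)$ and, in the odd case, invokes Poincar\'e duality for $(G/T)/W$ with sign coefficients -- the same calculation in different clothing.
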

A computation of the mod-$p$ cohomology of $C_n(\mathfrak{u}_p)^+$ when $n>2$ is even is also included.

\subsection{Organisation of the paper}

In Section \ref{sec:Miller} we review Miller's 
stable splitting for the unitary groups $U(m)$. In Section \ref{sec:filtration} we 
introduce the poset $\mathcal{P}$ and construct the filtration of $\Hom(\Z^n,U(m))$ which will lead to the stable splitting of Theorem \ref{thm:main1}. The theorem is proved in Section \ref{sec:splitting}. The non-splitting result, Theorem \ref{thm:main2}, is proved in Section \ref{sec:nonsplitting}. The article concludes with Appendix \ref{sec:commutingvariety} which contains in particular the proof of Theorem \ref{thm:main3}.

\section{The stable splitting for the unitary group}\label{sec:Miller}

In this section we briefly review Miller's stable splitting for the group $U(m)$. We follow the 
presentation of Crabb \cite{Crabb2}.

We start by constructing a suitable filtration of $U(m)$. For every $0\le k\le m$, let
\[
F^{k}(U(m))=\{A\in U(m)\mid \dim(\ker(A-\mathds{1}))\geq m-k\}\, .
\]
In other words, the elements in $F^{k}(U(m))$ 
are the matrices that have $1$ as an eigenvalue with multiplicity at least $m-k$. This way we obtain 
an increasing filtration of $U(m)$
\begin{equation}\label{filtration U(m)}
\{\mathds{1}\}=F^{0}(U(m))\subseteq F^{1}(U(m)) \subseteq\cdots \subseteq F^{m}(U(m))=U(m).
\end{equation}
Alternatively, this filtration is obtained in the following way. Let $T\subseteq U(m)$ be the maximal torus consisting of diagonal matrices and let $F^{k}(T)$ be the subspace of $T\cong (S^1)^m$ with at least $m-k$ coordinates equal to $1$. Consider the map
\begin{alignat*}{1}
\tilde{\varphi}\co U(m)\times T & \to U(m) \\
(g,t)&\mapsto gtg^{-1}\, . 
\end{alignat*}
Then $F^k(U(m))=\tilde{\varphi}(U(m)\times F^k(T))$. We will take this point of view in the next section to construct filtrations of $\Hom(\Z^n,U(m))$ from filtrations of $T^n$.

Let $V_{k}(\C^m)=U(m)/U(m-k)$ denote the Stiefel 
manifold of orthonormal $k$-frames in $\C^{m}$ and let $\textnormal{Gr}_{k}(\C^{m})=V_k(\C^m)/U(k)$ be the Grassmannian. Let $\u_{k}$ denote the Lie algebra of $U(k)$. It is the space of $k\times k$ skew-Hermitian matrices with the commutator bracket. We next explain how the stratum
\[
F_{k}(U(m)):=F^{k}(U(m))\backslash F^{k-1}(U(m))
\]
can be 
identified with the total space of the vector bundle 
\begin{equation} \label{eq:adjointbundle}
\u_{k}\to V_{k}(\C^m)\times_{U(k)} \u_{k}\to \textnormal{Gr}_{k}(\C^{m})
\end{equation}
associated via the adjoint representation.

\begin{definition} \label{def:cayley}
The \emph{Cayley transform} is the map
\begin{alignat*}{2} 
\psi_m\co & \uf_m &&\to  F_m(U(m))\\
& X && \mapsto  (X-\mathds{1})(X+\mathds{1})^{-1}\, .
\end{alignat*}
\end{definition}

Observe that the space $F_{m}(U(m))$ is an open dense subspace of $U(m)$ on which $U(m)$ acts by conjugation. The following lemma is basic:
\begin{lemma} \label{lem:cayleydiffeo}
The map $\psi_m\co \mathfrak{u}_m\to F_m(U(m))$ is a $U(m)$--equivariant diffeomorphism.
\end{lemma}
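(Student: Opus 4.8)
The plan is to verify directly that the Cayley transform $\psi_m$ is a well-defined smooth map with a smooth inverse, and that it intertwines the adjoint action on $\uf_m$ with the conjugation action on $F_m(U(m))$.

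First I would check that $\psi_m$ is well defined, i.e.\ that $X+\mathds{1}$ is invertible for every skew-Hermitian $X$. Since $X$ is skew-Hermitian its eigenvalues are purely imaginary, so $-1$ is not among them and $X+\mathds{1}$ is invertible; moreover $X+\mathds{1}$ and $X-\mathds{1}$ commute, so $\psi_m(X)=(X-\mathds{1})(X+\mathds{1})^{-1}=(X+\mathds{1})^{-1}(X-\mathds{1})$. To see that $\psi_m(X)\in U(m)$, compute $\psi_m(X)^*\psi_m(X)=(X^*+\mathds{1})^{-1}(X^*-\mathds{1})(X-\mathds{1})(X+\mathds{1})^{-1}$ and use $X^*=-X$: this becomes $(-X+\mathds{1})^{-1}(-X-\mathds{1})(X-\mathds{1})(X+\mathds{1})^{-1}$, and since all four factors commute this collapses to $\mathds{1}$. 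To see that the image lies in $F_m(U(m))$, note that $\psi_m(X)$ has $1$ as an eigenvalue only if $X-\mathds{1}$ is singular, which cannot happen since $1$ is not an eigenvalue of the skew-Hermitian $X$; hence $\ker(\psi_m(X)-\mathds{1})=0$ and $\psi_m(X)\in F_m(U(m))=F^m(U(m))\setminus F^{m-1}(U(m))$.

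Next I would exhibit the inverse. The natural candidate is $A\mapsto (\mathds{1}+A)(\mathds{1}-A)^{-1}$, defined whenever $\mathds{1}-A$ is invertible, which holds precisely when $1$ is not an eigenvalue of $A$, i.e.\ exactly on $F_m(U(m))$. A direct computation shows this map sends $U(m)$ (minus the bad locus) into the skew-Hermitian matrices, using $A^*=A^{-1}$, and that it is a two-sided inverse to $\psi_m$ by a rational-function identity in the single commuting variable $X$ (resp.\ $A$). Both $\psi_m$ and its inverse are given by matrix-valued rational expressions with nonvanishing denominators on their domains, hence are smooth; this gives the diffeomorphism. Finally, equivariance is immediate: for $g\in U(m)$, $\psi_m(gXg^{-1})=(gXg^{-1}-\mathds{1})(gXg^{-1}+\mathds{1})^{-1}=g(X-\mathds{1})g^{-1}\,g(X+\mathds{1})^{-1}g^{-1}=g\psi_m(X)g^{-1}$, since $g(X\pm\mathds{1})g^{-1}=g(X\pm\mathds{1})^{-1}g^{-1}$ inverts correctly under conjugation.

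I do not expect a serious obstacle here; the lemma is genuinely elementary and the only points requiring a little care are (i) checking that the relevant matrices $X\pm\mathds{1}$ and $\mathds{1}\pm A$ are invertible on the correct domains, which follows from the spectral properties of skew-Hermitian and unitary matrices, and (ii) keeping track of the fact that all the matrices appearing in each expression commute, so that the manipulations reduce to scalar rational-function identities applied on each eigenspace. The smoothness of $\psi_m$ and its inverse then follows formally since inversion of matrices is smooth on $GL_m(\C)$.
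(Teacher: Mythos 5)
Your argument is correct and takes essentially the same route as the paper, which simply records the inverse formula $\psi_m^{-1}(A)=(\mathds{1}-A)^{-1}(\mathds{1}+A)$ (equivalent to your $(\mathds{1}+A)(\mathds{1}-A)^{-1}$ since the factors commute) and leaves the routine verifications to the reader. You have spelled out those verifications — invertibility of $X\pm\mathds{1}$ and $\mathds{1}-A$ on the relevant domains, unitarity of the image, smoothness, and equivariance — in full, which is fine but not a different method.
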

\begin{proof}
The inverse of $\psi_m$ is given by the formula $\psi_m^{-1}(A)=(\mathds{1}-A)^{-1}(\mathds{1}+A)$.
\end{proof}

Thus, when $k=m$ the Cayley transform identifies $F_{k}(U(m))$ with the total space of the adjoint bundle (\ref{eq:adjointbundle}). More generally, for $0\le k\le m$ the stratum $F_{k}(U(m))$ fibres over $\textnormal{Gr}_{k}(\C^{m})$ by considering
the map 
\begin{alignat*}{1} 
F_{k}(U(m)) &\to  \textnormal{Gr}_{k}(\C^{m}) \\
A & \mapsto  \ker(A-\mathds{1})^{\perp}\, .
\end{alignat*}
The point is that a transformation $A\in F_{k}(U(m))$ can be written in the form 
\[
\mathds{1}\oplus A|_{V_1}:V_{0}\oplus V_{1}\to V_{0}\oplus V_{1}
\]
where  $V_{0}=\ker(A-\mathds{1})$ and $V_{1}=V_0^{\perp}$. So $A$ is uniquely determined by $V_1\in \textnormal{Gr}_{k}(\C^m)$ and the restriction $A|_{V_1}$. Note that $A|_{V_1}$ is an element of $F_k(U(V_1))\cong \mathfrak{u}_k$. More precisely, the map 

\begin{alignat*}{1} 
V_k(\mathbb{C}^m)\times_{U(k)} \u_{k} & \to F_{k}(U(m)) \\
[(\bar{g},X)] & \mapsto g(\mathds{1}\oplus \psi_k(X))g^{-1}
\end{alignat*}
is a diffeomorphism over $\textnormal{Gr}_{k}(\C^{m})$ (here $g\in U(m)$ is a representative of an element $\bar{g}\in U(m)/U(m-k)= V_k(\C^m)$). It follows that the 
subquotient $F^{k}(U(m))/F^{k-1}(U(m))$ can be identified with the Thom space of the adjoint bundle:
\[
\textnormal{Gr}_{k}(\C^{m})^{\uf_{k}}=(U(m)/U(m-k))_+\wedge_{U(k)} \mathfrak{u}_k^+\,.
\] 
In particular, the top filtration quotient $U(m)/F^{m-1}(U(m))$ is $\mathfrak{u}_m^+$.

Finally, one shows that the filtration (\ref{filtration U(m)}) 
splits stably. The top quotient is split off using a Pontryagin-Thom construction: Let $\mathcal{H}(m)$ be the vector space of hermitian matrices of size $m\times m$. The embedding
\begin{align*}
U(m)\times \mathcal{H}(m)&\to \textnormal{Mat}_m(\C)\\
(A,Z)&\mapsto A\exp(-Z)
\end{align*}
is onto $GL_m(\C)$, and so $GL_m(\C)$ is a tubular neighbourhood of $U(m)$ inside $\textnormal{Mat}_m(\C) \cong \u_{m}\times \mathcal{H}(m)$. Collapsing the complement to a point we obtain a map 
\[
\u_{m}^{+}\wedge \mathcal{H}(m)^{+}\to U(m)_{+}\wedge \mathcal{H}(m)^{+}. 
\]
Note that $\mathcal{H}(m)^+$ is a sphere, and so this is a stable map $\u_{m}^+\to U(m)_+$. One checks that this is a stable section up to homotopy of the projection
\begin{equation} \label{eq:millertopstratum}
U(m)_{+}\to U(m)/F^{m-1}(U(m))\cong \u_{m}^{+}\, .
\end{equation}

The general case, which shows that $\textnormal{Gr}_k(\C^m)^{\mathfrak{u}_k}$ splits off stably from $F^k(U(m))_+$, is handled similarly using the Pontryagin-Thom 
construction fibrewise. For this one notes that the stable section constructed above is $U(m)$-equivariant. Taking the wedge over the various sections $\textnormal{Gr}_k(\C^m)^{\mathfrak{u}_k}\to F^k(U(m))_+\hookrightarrow U(m)_+$ leads to a stable map
\[
\bigvee_{0\le k\le m}\textnormal{Gr}_{k}(\C^{m})^{\uf_{k}}\to U(m)_+
\]
which is a homotopy equivalence. The details are in \cite{Crabb2}.

One may attempt the same approach to construct a stable splitting of $\Hom(\Z^n,U(m))$. However, the Pontryagin-Thom collapse does not apply naively in that situation, because the open embedding $U(m)\times \mathcal{H}(m)\hookrightarrow \textnormal{Mat}_m(\C)$ considered above does not produce a tubular neighbourhood of any sort for $\Hom(\Z^n,U(m))$.

\section{The filtration of $\Hom(\Z^n,U(m))$}\label{sec:filtration}

The aim of this section is to describe the filtration of $\Hom(\Z^n,U(m))$ on which our stable 
splitting is based. Throughout this section let $n\geq 1$ and $m\geq 0$ be fixed.

\subsection{Commuting elements}

The space $\Hom(\Z^n,U(m))$ of $n$-tuples of pairwise commuting elements in $U(m)$ (defined in Section \ref{sec:introduction}) is topologized as a subspace of $U(m)^n$. Let $T\subseteq U(m)$ 
be the maximal torus consisting of all diagonal matrices. Consider the map
\begin{alignat*}{1}
\tilde{\varphi}\co U(m)\times T^n& \to \Hom(\Z^n,U(m)) \\
(g,t_1,\dots,t_n) & \mapsto (gt_1 g^{-1},\dots,gt_ng^{-1})\, .
\end{alignat*}
It is easily seen to be surjective, as any abelian subgroup of $U(m)$ is contained in a maximal torus. 
There is an action of the normaliser $N_{U(m)}(T)$ on $U(m)\times T^n$ defined by
\[
h \cdot (g,t_1,\dots,t_n)=(gh^{-1},ht_1h^{-1},\dots,h t_n h^{-1})\, ,\quad h\in N_{U(m)}(T)\, .
\]
The map $\tilde{\varphi}$ is invariant under this action and so it descends to a map
\begin{equation} \label{eq:actionmapliegroup}
\varphi\co U(m)\times_{N_{U(m)}(T)} T^n   \to \Hom(\Z^n,U(m))\, .
\end{equation}
Notice that
\[
U(m)\times_{N_{U(m)}(T)} T^n=U(m)/T\times_{\Sigma_m} T^n\,
\]
where the  symmetric group $\Sigma_m$ appears as the Weyl group $N_{U(m)}(T)/T$. It acts on $T^n$ by 
permuting the $m$ factors of $T\cong (S^1)^m$.

\subsection{Indexing poset} \label{sec:poset}
A filtration of $\Hom(\Z^n,U(m))$ can be constructed by starting with a $\Sigma_m$-invariant 
filtration of $T^n$ and pushing it forward via $\varphi$. The filtration of $T^n$ that we shall 
consider is indexed over a poset $\mathcal{P}$ of partitions of $m$ which we will now describe. 
Let $I=\{0,1\}^n$ be the set of binary sequences of length $n$, and let $\mathcal{P}$ be the set 
of $I$-indexed ordered integer partitions of $m$, i.e.,
\[
\mathcal{P}= \left\{(\lambda_a)_{a \in I} \in \Z^I ~\left|~ \lambda_a\geq 0 
\textnormal{ for all }a\in I \textnormal{ and } \sum_{a\in I} \lambda_a=m \right\}\right.\,.
\]
For example, when $n=2$, elements in $\P$ are of the form
$\lambda=(\lambda_{(0,0)}, \lambda_{(0,1)}, \lambda_{(1,0)}, \lambda_{(1,1)})$, where 
each $\lambda_{a}$ is a non-negative integer and  
\[
\lambda_{(0,0)}+\lambda_{(0,1)}+\lambda_{(1,0)}+\lambda_{(1,1)}=m.
\] 

The set $I=\{0,1\}^n$ is totally ordered by the lexicographic order in which 
$(0,\dots,0)$ is the smallest element and $(1,\dots,1)$ is the largest element. With this in mind, 
to each partition $\lambda= (\lambda_a)_{a \in I}$ we can associate an $m\times n$  
matrix $M(\lambda)$ with entries in $\{0,1\}$ defined by blocks in the following way. 
In the first $\lambda_{(0,\dots,0)}$ rows of $M(\lambda)$ we put the row $(0,\dots,0)$. 
Subsequently, we define the next $\lambda_{(0,\dots,0,1)}$ rows in $M(\lambda)$ to be 
equal to $(0,\dots,0,1)$. We continue this way, using the order in $I$, until we reach 
the last $\lambda_{(1,\dots,1)}$ rows in $M(\lambda)$ and these rows are defined 
to be equal to $(1,\dots,1)$.  Note that the assignment $\lambda\mapsto M(\lambda)$ is one-to-one.

\begin{example} 
Consider the particular case 
$n=2$ and $m=6$, and let $\lambda$ be the $I$-indexed partition of $m=6$ given by 
\[
\lambda_{(0,0)}=1,\ \lambda_{(0,1)}=1,\ \lambda_{(1,0)}=2,\ \lambda_{(1,1)}=2.
\]
In this case, the matrix $M(\lambda)$ associated to $\lambda$ is the $6\times 2$ matrix 
given by
\[
    M(\lambda)= 
    \begin{pmatrix}
              0 & 0 \\ \hline 
				0 & 1 \\ \hline 
				1 & 0 \\ 
				1 & 0 \\ \hline 
				1 & 1\\
				1 & 1
        \end{pmatrix}.
\]
\end{example}

Using these matrices $\{M(\lambda)~|~ \lambda\in \mathcal{P}\}$ we can define a partial order on 
$\mathcal{P}$ in the following way: Given $\lambda,\mu\in \mathcal{P}$ we declare $\mu\leq \lambda$ 
if, after suitable permutations of rows, the matrix $M(\mu)$ is obtained from $M(\lambda)$ by replacing 
some entries that are equal to $1$ with a $0$. 

\begin{example} 
Consider the case $n=2$ and $m=6$, and let 
$\mu, \lambda$ be the $I$-indexed partitions of $m=6$ defined by 
\begin{align*}
\mu_{(0,0)}&=1,\ \mu_{(0,1)}=2,\ \mu_{(1,0)}=2,\ \mu_{(1,1)}=1,\\
\lambda_{(0,0)}&=1,\ \lambda_{(0,1)}=1,\ \lambda_{(1,0)}=2,\ \lambda_{(1,1)}=2.
\end{align*}
In this example the matrices $M(\mu)$ and $M(\lambda)$ are as follows
\[
 M(\mu)= 
    \begin{pmatrix}
        0 & 0 \\ \hline
				0 & 1 \\  
				0 & 1 \\ \hline 
				1 & 0 \\ 
				1 & 0\\ \hline
				1 & 1
      \end{pmatrix},
		\ \ \ \ \
		 M(\lambda)= 
    \begin{pmatrix}
        0 & 0 \\ \hline 
				0 & 1 \\ \hline 
				1 & 0 \\ 
				1 & 0 \\ \hline 
				1 & 1\\
				1 & 1
             \end{pmatrix}.
\] 
In this case, $M(\mu)$ is obtained from $M(\lambda)$ by replacing the entry 
$(5,1)$ with a $0$ and then permuting rows $3$ and $5$. This shows that 
$\mu\leq \lambda$.
\end{example}

We endow the poset $\mathcal{P}$ with the upper topology in which the closed 
sets are generated by the subbasis $\{\mathcal{P}_{\leq \lambda} \mid  \lambda \in \mathcal{P} \}$ 
where $\mathcal{P}_{\leq \lambda}=\{\mu\in \mathcal{P} \mid \mu\leq \lambda\}$. 
If $X$ is a space and $f\co X\to \mathcal{P}$ is a continuous map, then for all $\lambda\in \mathcal{P}$ the subspace
\[
F^\lambda (X):=f^{-1}(\mathcal{P}_{\leq \lambda}) \subseteq X
\]
is closed, and there is an inclusion $F^\mu(X)\subseteq F^\lambda(X)$ whenever $\mu\leq \lambda$. Thus, $f$ induces a filtration of $X$ by closed subspaces indexed by $\mathcal{P}$.

 \subsection{Construction of the filtration} \label{sec:construction}
To filter $T^n$ we define a map $\Psi_T \co T^n\to \mathcal{P}$ in the following way. 
An element $x\in T^n$ can be evidently viewed as a matrix of size $m\times n$ with entries in $S^1$. 
In this picture the Weyl group $\Sigma_m$ permutes the rows of $x$. To each row $x_i=(x_{i1},\dots,x_{in})\in (S^1)^n$, $1\leq i \leq m$ of $x$ we can assign a binary sequence $\textnormal{seq}(x_i)\in I$ by setting
\[
\textnormal{seq}(x_i)(j) =\begin{cases} 0 & \textnormal{if } x_{ij}=1, \\ 1 & \textnormal{if }x_{ij}\neq 1 \end{cases}
\]
for all $1\leq j \leq n$. For $a\in I$ we let $\lambda_a$ be the number of $1\leq i\leq m$ such that $\textnormal{seq}(x_i)=a$ and set $\Psi_T(x):=(\lambda_a)_{a\in I}$.

\begin{example}
Suppose that $n=2$ and $m=6$, and let $x\in T^{2}$ be the element corresponding to the matrix 
\[
x=
\begin{pmatrix}
* & * \\ 
1 & 1 \\  
* & 1 \\ 
* & 1 \\ 
* & *\\
1 & *
\end{pmatrix},
\] 
where the $*$`s can take any value in $S^{1}$ different from $1$. Then $\lambda=\Psi_T(x)\vdash 6$ is given by
\[
\lambda_{(0,0)}=1,\, \lambda_{(0,1)}=1,\, \lambda_{(1,0)}=2,\, \lambda_{(1,1)}=2\, .
\]
\end{example}

It is easily checked that for each $\lambda\in \mathcal{P}$ the subspace
\begin{equation*} \label{eq:filtrationoftorus}
F^\lambda(T^n):=\Psi_T^{-1}(\mathcal{P}_{\leq \lambda})\subseteq T^n
\end{equation*}
is closed, and thus $\Psi_T\co T^n\to \mathcal{P}$ is continuous. The filtration defined by $\Psi_T$ refines the fat wedge filtration of $T^n$.

\begin{example} \label{ex:fatwedget}
Suppose that $n=1$. Then $I=\{0,1\}$ and thus all partitions $\lambda\in \mathcal{P}$ are of the 
form $\lambda=(m-k,k)$ for some $0\leq k \leq m$. Projection onto the second component defines a 
homeomorphism of posets $\mathcal{P}\cong \N_{\leq m}$. Then $\Psi_T^{-1}(\N_{\leq k})$ is precisely the subspace of 
$T\cong (S^1)^m$ consisting of those $m$-tuples that have at least $m-k$ entries equal to $1$. Therefore, the filtration associated with $\Psi_T\co T\to \N_{\leq m}$ is the fat wedge filtration of $T$.
\end{example}

\begin{example} \label{ex:fatwedgetn}
More generally, for any $n\geq 1$ we can define a map of posets $f \co \mathcal{P}\to \N$ by 
sending $(\lambda_{a})_{a\in I}$ to $\sum_{a\in I} \lambda_a |a|$, where 
$|a|:=\sum_{i=1}^n a(i)$. For $x\in T^n$, $f(\Psi_T(x))$ is precisely the number of 
entries of $x$ that are different from $1$. Therefore, the filtration defined by $f \Psi_T$ is the fat wedge filtration of 
$T^n\cong (S^1)^{nm}$.
\end{example}

Since every commuting $n$-tuple in $U(m)$ is conjugate to an $n$-tuple in $T$, the quotient of $\Hom(\Z^n,U(m))$ by the conjugation action of $U(m)$ is homeomorphic to $T^n/\Sigma_m$. Clearly, $\Psi_T$ factors through a continuous map $\overline{\Psi}_T\co T^n/\Sigma_m \to \mathcal{P}$.

\begin{construction} \label{cons:construction}
We let
\[
\Psi\co \Hom(\Z^n,U(m))\to \mathcal{P}
\]
be the composition of the quotient map $\Hom(\Z^n,U(m))\to T^n/\Sigma_m$ with $\overline{\Psi}_T$, and let 
\[
F^\lambda (\Hom(\Z^n,U(m))) := \Psi^{-1}(\mathcal{P}_{\leq \lambda})
\]
be the filtration of $\Hom(\Z^n,U(m))$ induced by $\Psi$.
\end{construction}

Of course,
\[
F^\lambda (\Hom(\Z^n,U(m)))  = \varphi((U(m)/T\times_{\Sigma_m} F^\lambda(T^n)))\,,
\]
and so the filtration of $\Hom(\Z^n,U(m))$ is the filtration of $T^n$ pushed forward by $\varphi$.

We will frequently adjoin a basepoint to $\Hom(\Z^n,U(m))$ in which case we extend the filtration to include the basepoint at every filtration stage.

The filtration of $\Hom(\Z^n,U(m))$ generalises the one used in Miller's 
splitting and it refines the filtration used in the splitting of Adem and Cohen.

\begin{example} \label{ex:filtrationmiller}
Suppose that $n=1$. The filtration of 
$\Hom(\Z,U(m))\cong U(m)$ induced by $\Psi \co U(m)\to \mathcal{P}\cong \N_{\leq m}$ is given by
\[
\Psi^{-1}(\N_{\leq k})=\left\{g\in U(m) \mid \dim \ker(g-\mathds{1})\geq m-k\right\}\, .
\]
This is the filtration on which the stable splitting of $U(m)$ is based 
(see Section \ref{sec:Miller}).
\end{example}

\begin{example} \label{ex:filtrationademcohen}
Let $f\co \mathcal{P}\to \N$ be the map of posets defined by
\[
f(\lambda)=n-|\{i\in \{1,\dots,n\}\mid  \sum_{a\in I}\lambda_a a(i)=0\}|\, .
\]
Then the filtration of $\Hom(\Z^n,U(m))$ induced by $f \Psi \co \Hom(\Z^n,U(m))\to \N$ 
coincides with the one induced by the fat wedge filtration of $U(m)^n$,
\[
(f \Psi)^{-1}(\N_{\leq k}) = \{(g_1,\dots,g_n)\in \Hom(\Z^n,U(m)) \mid 
\textnormal{at least }n-k\textnormal{ of the }g_i\textnormal{ are equal to }\BONE\}\, .
\]
This is the filtration that is used in the stable splitting of Adem and Cohen.
\end{example}

\subsection{Subquotients of the filtration}
The map $\Psi \co \Hom(\Z^n,U(m))\to \mathcal{P}$ gives a partition of $\Hom(\Z^n,U(m))$ into ``strata" (these are not manifolds in general)
\[
\Hom(\Z^n,U(m))_\lambda := \Psi^{-1}(\lambda), \; \lambda \in \mathcal{P}
\]
that we will now describe.

From the introduction, recall the definition of the variety $C_n(\mathfrak{u}_m)$ of commuting elements in the Lie algebra $\mathfrak{u}_m$. We topologize $C_n(\mathfrak{u}_m)$ as a subspace of the Euclidean space $\mathfrak{u}_m^n$. Also recall that $I=\{0,1\}^n$ is totally ordered via the 
lexicographic order. By fixing this total order on $I$ we can view, for every 
$\lambda\in \mathcal{P}$, the direct product
\[
U(\lambda):=\prod_{a\in I} U(\lambda_a)
\]
as a subgroup of $U(m)$ in the obvious way. For $a\in I$ we write $|a|:=\sum_{i=1}^n a(i)$.

\begin{proposition} \label{prop:strata}
For every $\lambda\in \mathcal{P}$ there is a $U(m)$-equivariant homeomorphism
\[
\Hom(\Z^n,U(m))_\lambda \cong U(m) \times_{U(\lambda)}  \prod_{a\in I} C_{|a|}(\mathfrak{u}_{\lambda_a}) \, .
\]
\end{proposition}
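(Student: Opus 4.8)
The plan is to analyze the stratum $\Hom(\Z^n,U(m))_\lambda = \Psi^{-1}(\lambda)$ by first understanding its preimage under $\varphi$ in $U(m)/T \times_{\Sigma_m} T^n$, and then using the Cayley transform to turn the ``torus part'' into a Lie algebra part. First I would describe the stratum $F^\lambda(T^n)_\lambda := \Psi_T^{-1}(\lambda) \subseteq T^n$ concretely: by definition of $\Psi_T$, an element $x \in T^n$, viewed as an $m \times n$ matrix over $S^1$, lies in this stratum iff exactly $\lambda_a$ of its rows have ``signature'' (the pattern of which entries are $\ne 1$) equal to $a \in I$. Up to the $\Sigma_m$-action permuting rows, such an $x$ is determined by choosing, for each $a \in I$, an unordered collection of $\lambda_a$ rows each lying in $(S^1 \setminus \{1\})^{|a|} \times \{1\}^{n-|a|}$ (positioned according to the support of $a$). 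In other words, the $\Sigma_m$-orbit space of $\Psi_T^{-1}(\lambda)$ decomposes as a product over $a \in I$ of $\mathrm{SP}^{\lambda_a}$ of $((S^1\setminus\{1\}))^{|a|}$, and the corresponding subgroup of $U(m)$ that acts with this quotient is exactly $U(\lambda) = \prod_{a\in I} U(\lambda_a)$ acting by permuting rows within each block.

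Next I would apply the Cayley transform blockwise. For a single block indexed by $a$ with $|a| = \ell$, the relevant piece is $U(\lambda_a) \times_{U(\lambda_a)^{\mathrm{diag-perm}}}$ — more precisely, inside the $a$-block we have $\lambda_a$ rows, each an element of $(S^1 \setminus \{1\})^{\ell}$ sitting in coordinates indexed by the support of $a$. Conjugating by $U(\lambda_a)$ and using that the Cayley transform $\psi_{\lambda_a} : \mathfrak{u}_{\lambda_a} \to F_{\lambda_a}(U(\lambda_a))$ is a $U(\lambda_a)$-equivariant diffeomorphism (Lemma \ref{lem:cayleydiffeo}), I would identify the tuples in the $a$-block with commuting $\ell$-tuples $(X_{1},\dots,X_{\ell}) \in \mathfrak{u}_{\lambda_a}^{\ell}$ such that each $\psi_{\lambda_a}(X_j)$ has $1$ as an eigenvalue with multiplicity exactly $0$ — i.e. lies in the top stratum $F_{\lambda_a}(U(\lambda_a))$, equivalently $X_j$ is invertible — together with the commutation constraint $[X_i, X_j] = 0$. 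The key point is that $\psi_{\lambda_a}$ carries the ``no eigenvalue $1$'' condition on a commuting tuple in $U(\lambda_a)$ to commuting tuples in $\mathfrak{u}_{\lambda_a}$, and since $\psi_{\lambda_a}$ preserves commuting (conjugation-equivariance plus the fact that $\psi_{\lambda_a}$ is a rational function of a single matrix, so commuting matrices go to commuting matrices), this identifies the $a$-block stratum with an open subset of $C_{|a|}(\mathfrak{u}_{\lambda_a})$ — but I then need to recognize that this open subset is actually all of $C_{|a|}(\mathfrak{u}_{\lambda_a})$ in the relevant sense, because the rows live in all of $S^1$ (not just $S^1\setminus\{1\}$) once we account for the fact that within the $a$-block a given coordinate $j$ is forced to be $\ne 1$ only when $a(j) = 1$; when $a(j)=1$ the entry ranges over $S^1\setminus\{1\}$, but... here I should be careful: the signature condition forces entry $j$ of every row in block $a$ to be $\ne 1$ exactly when $a(j)=1$, and this corresponds under Cayley to $X_j$ invertible, so actually we land in the open dense subset of $C_{|a|}(\mathfrak{u}_{\lambda_a})$ of tuples with all entries invertible. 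I would then argue this is fine because in fact we do not need it to be all of $C_{|a|}$: the proposition only claims a homeomorphism on the stratum level, so I re-examine — actually the correct statement is that the stratum is $U(m)\times_{U(\lambda)}\prod_a C_{|a|}(\mathfrak u_{\lambda_a})^\circ$ where $^\circ$ denotes the invertible locus; but since the filtration of $C_{|a|}(\mathfrak{u}_{\lambda_a})$ by the compactification is what matters later, and the proposition as stated writes $C_{|a|}(\mathfrak{u}_{\lambda_a})$ without decoration, I suspect the intended reading (to be checked against the authors) is that entry $x_{ij}$ ranges over $S^1\setminus\{1\}$ precisely gives, via $\psi$, all of $\mathfrak u_{\lambda_a}$ minus... no: $\psi_{\lambda_a}$ is a diffeomorphism onto $F_{\lambda_a}(U(\lambda_a))$, which is the full stratum, so the image is the set of commuting tuples each of which is in the top stratum of $U(\lambda_a)$, and I must match this against $C_{|a|}(\mathfrak u_{\lambda_a})$ directly — the cleanest route is to define the map $U(m)\times_{U(\lambda)}\prod_a C_{|a|}(\mathfrak u_{\lambda_a})\to \Hom(\Z^n,U(m))$ by $[(g,(X^{(a)}_j)_{a,j})]\mapsto g\cdot\mathrm{diag}_a(\text{blocks built from }\psi_{\lambda_a}(X^{(a)}_j))\cdot g^{-1}$, show its image lands in $\overline{\Hom(\Z^n,U(m))_\lambda}$, and that it restricts to a homeomorphism onto the open stratum.

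Once the map is written down, the remaining steps are: (1) check it is well-defined on the $U(\lambda)$-balanced product, using $U(\lambda)$-equivariance of each $\psi_{\lambda_a}$; (2) check $U(m)$-equivariance, which is immediate from the formula since $U(m)$ acts by left translation on the first factor and by conjugation on the target; (3) construct the inverse — given a commuting tuple in $\Hom(\Z^n,U(m))_\lambda$, simultaneously diagonalize it (possible since it is a commuting tuple of unitaries, hence conjugate into $T$), read off the block decomposition of $\C^m$ according to the joint eigenvalue-signature of the common eigenvectors, which recovers the $U(m)/U(\lambda)$ coordinate and the element $g$ up to $U(\lambda)$, and then apply $\psi_{\lambda_a}^{-1}$ in each block to recover the $X^{(a)}_j \in \mathfrak u_{\lambda_a}$; (4) check continuity of both maps — the forward map is manifestly continuous, and the inverse is continuous because the eigenspace decomposition varies continuously on a single stratum (the joint eigenvalue multiplicities are locally constant there, which is exactly what cutting out the stratum $\Psi^{-1}(\lambda)$ buys us). The main obstacle will be step (4), namely verifying that the ``diagonalize and split into blocks'' inverse is genuinely continuous: a priori eigenvalue/eigenvector data jumps, but the whole point of the stratification by $\mathcal P$ is that on $\Psi^{-1}(\lambda)$ the partition of $\C^m$ into the subspaces ``coordinate-$j$-acts-trivially-or-not'' is combinatorially fixed, so the spectral projections onto the $2^n$ summands are continuous on the stratum; I would make this precise by noting that the projection onto the $a$-summand is a continuous function (a product of polynomials in the $x_i$ picking out the joint generalized eigenspace for the relevant pattern of ``$1$ vs.\ $\ne 1$'') of the tuple, well-defined because the relevant eigenvalues stay bounded away from $1$ on the stratum.
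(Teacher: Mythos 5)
Your overall strategy lines up with the paper's: reduce to the Lie--algebra picture block-by-block via the Cayley transform, with the flag coordinate $U(m)/U(\lambda)$ recording the $2^n$-block decomposition of $\C^m$ and the block data living in $\prod_a C_{|a|}(\mathfrak u_{\lambda_a})$. But there is a factual error in the middle that you never cleanly resolve, and it is exactly the point the proposition turns on. You write that the signature condition ``entry $j$ of every row in block $a$ is $\ne 1$'' corresponds under the Cayley transform to ``$X_j$ invertible,'' and conclude the identification only hits the open dense locus of invertible tuples in $C_{|a|}(\mathfrak u_{\lambda_a})$. That is false. The condition on $g_j^{(a)}\in U(\lambda_a)$ is that $g_j^{(a)}-\mathds{1}$ is nonsingular, i.e.\ $g_j^{(a)}\in F_{\lambda_a}(U(\lambda_a))$, and Lemma \ref{lem:cayleydiffeo} says $\psi_{\lambda_a}\colon \mathfrak u_{\lambda_a}\to F_{\lambda_a}(U(\lambda_a))$ is a diffeomorphism \emph{onto} this set. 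Concretely $\psi(X)-\mathds{1}=-2(X+\mathds{1})^{-1}$, which is nonsingular for \emph{every} skew-Hermitian $X$; invertibility of $X$ is instead equivalent to $\psi(X)$ not having $-1$ as an eigenvalue, which is an irrelevant condition here. So the Cayley image is all of $C_{|a|}(\mathfrak u_{\lambda_a})$, no restriction to an open subset, and the forward map lands in the stratum $\Hom(\Z^n,U(m))_\lambda$ itself (not merely in its closure, as you hedge later). You do half-retract the error (``$\psi_{\lambda_a}$ is a diffeomorphism onto $F_{\lambda_a}(U(\lambda_a))$, which is the full stratum''), but the proposal as written leaves the reader unsure whether you believe the map is onto or only onto an open subset, and that is the crux.

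On packaging, you are doing more work than necessary by constructing an explicit two-sided homeomorphism and separately arguing continuity of the inverse via spectral projections. The paper instead invokes the standard criterion (\cite[Proposition II.3.2]{Bredon}): it suffices to produce a $U(m)$-equivariant map $p_\lambda\colon\Hom(\Z^n,U(m))_\lambda\to U(m)/U(\lambda)$ and identify the fibre over the identity coset, as a $U(\lambda)$-space, with $\prod_a C_{|a|}(\mathfrak u_{\lambda_a})$. Continuity of $p_\lambda$ then follows cheaply from the fact that the restricted action map $U(m)\times R_\lambda\to\Hom(\Z^n,U(m))_\lambda$ is a closed surjection, hence a quotient map, and the composite $p_\lambda\circ\tilde\varphi|$ is the evident projection. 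This sidesteps the continuity-of-spectral-projections argument you sketch in step (4), which is correct in spirit but would require more care to make precise than the paper's route.
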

\begin{proof}
According to \cite[Proposition II.3.2]{Bredon} it is enough to construct a $U(m)$-equivariant map
\[
p_\lambda\co \Hom(\Z^n,U(m))_\lambda\to U(m)/U(\lambda)
\]
and to identify the fibre over the neutral coset, as a $U(\lambda)$-space, with 
$\prod_{a\in I} C_{|a|}(\mathfrak{u}_{\lambda_a})$.

Recall that elements of $T^n$ can be viewed as $m\times n$ matrices with entries in $S^1$. 
Given $x\in T^n$, we can assign to each row 
$x_i=(x_{i1},\dots,x_{i n})\in (S^1)^n$, $1\leq i \leq m$ a binary sequence $\textnormal{seq}(x_i)\in I$ as in Section \ref{sec:construction}. Let $R_\lambda$ be the subspace of $\Psi^{-1}_T(\lambda)\subseteq T^n$ 
defined by
\[
R_\lambda=\{x\in \Psi^{-1}_T(\lambda) \mid \textnormal{seq}(x_1)\leq \cdots \leq \textnormal{seq}(x_m) \}\, .
\]
As the $\Sigma_m$-orbit of any $x\in \Psi_T^{-1}(\lambda)$ intersects $R_\lambda$ non-trivially, 
the restriction of the action map $\tilde{\varphi}\co U(m)\times T^n\to \Hom(\Z^n,U(m))$ to $U(m)\times R_\lambda$,
\[
\tilde{\varphi}|\co U(m)\times R_\lambda \to \Hom(\Z^n,U(m))_\lambda\,,
\]
is surjective. Thus, for every $(g_1,\dots,g_n)\in \Hom(\Z^n,U(m))_\lambda$ there is a 
$h=h(g_1,\dots,g_n)\in U(m)$ such that $(h^{-1}g_1h,\dots,h^{-1}g_n h)\in R_\lambda$. 
It is easy to see that $h$ is uniquely determined modulo $U(\lambda)$ by $(g_1,\dots,g_n)$, 
and thus the assignment
\begin{alignat*}{1}
p_\lambda\co \Hom(\Z^n,U(m))_\lambda & \to U(m)/U(\lambda) \\
(g_1,\dots,g_n) & \mapsto h(g_1,\dots,g_n)U(\lambda)
\end{alignat*}
is well-defined. As $\tilde{\varphi}|$ is a closed surjection, hence a quotient map, and 
$p_\lambda \tilde{\varphi}|\co U(m)\times R_\lambda\to U(m)/U(\lambda)$ is continuous 
being the obvious projection, $p_\lambda$ is continuous as well.
 
It remains to identify the fibre over $\BONE U(\lambda)$. Using $\oplus$ to denote the direct 
sum of matrices or representations, the elements of $p_\lambda^{-1}(\BONE U(\lambda))$ are 
precisely those $n$-tuples of the form
\[
(g_1,\dots,g_n)=\bigoplus_{a\in I} (g_1^{(a)},\dots,g_n^{(a)})
\]
where $(g_1^{(a)},\dots,g_n^{(a)}) \in \Hom(\Z^n,U(\lambda_a))$ satisfies for all $i\in \{1,\dots,n\}$
\[
g_i^{(a)}-\BONE=\begin{cases} 0 & \textnormal{if } a(i)=0 \\ \textnormal{non-singular} 
& \textnormal{if } a(i)=1\, . \end{cases}
\]
The Cayley transform
\begin{alignat*}{1}
\psi_k \co \mathfrak{u}_k & \to  \{ g\in U(k) \mid g-\BONE\textnormal{ is non-singular}\}
\end{alignat*}
from Definition \ref{def:cayley} respects commutativity, i.e., for all 
$X,Y\in \mathfrak{u}_k$ we have $[X,Y]=0$ if and only if $[\psi_k(X),\psi_k(Y)]=\BONE$. 
Therefore, we can define for every $a\in I$ a $U(\lambda_a)$-equivariant map
\[
\psi'_{\lambda_a}\co C_{|a|}(\mathfrak{u}_{\lambda_a}) \to \Hom(\Z^n,U(\lambda_a))
\]
in the following fashion. Let $\iota_a\co \{1,\dots ,|a|\}\to \{1,\dots,n\}$ be the unique 
monotone injection with $\textnormal{im}(\iota_a)=\{i\in \{1,\dots,n\}\mid a(i)=1\}$. Let
\begin{equation} \label{eq:shufflemap}
\textnormal{sh}_a\co \Hom(\Z^{|a|},U(\lambda_a))\to \Hom(\Z^n,U(\lambda_a))
\end{equation}
be the map defined by $\textnormal{sh}_a(g_1,\dots,g_{|a|})=(g_1',\dots,g_n')$ with
\[
g_i'=\begin{cases} \BONE & \textnormal{if } \iota_a^{-1}(i)=\emptyset \\ g_j & 
\textnormal{if } \iota_a(j)=i\, . \end{cases}
\]
Then $\psi'_{\lambda_a}$ is defined by
\[
\psi'_{\lambda_a}(X_1,\dots,X_{|a|})
=\textnormal{sh}_a(\psi_{\lambda_a}(X_1),\dots,\psi_{\lambda_a}(X_{|a|}))\, .
\]
These maps assemble into a $U(\lambda)$-equivariant map
\[
\prod_{a\in I} C_{|a|}(\mathfrak{u}_{\lambda_a}) 
\xrightarrow{\prod_{a\in I} \psi'_{\lambda_a}} 
\prod_{a\in I} \Hom(\Z^{n},U(\lambda_a)) \xrightarrow{\oplus} \Hom(\Z^n,U(m))
\]
which is easily seen to be a homeomorphism onto 
$p_\lambda^{-1}(\BONE U(\lambda))\subseteq \Hom(\Z^n,U(m))$.
\end{proof}

Observe that the homogeneous space $U(m)/U(\lambda)$ is the flag manifold consisting of all the orthogonal decompositions of $\C^{m}$
into vector subspaces of dimensions $\lambda_a$, $a\in I$. By Proposition \ref{prop:strata}, the space $\Hom(\Z^n,U(m))_\lambda$ is the total space of a fibre bundle
\[
\prod_{a\in I} C_{|a|}(\mathfrak{u}_{\lambda_a}) \to \Hom(\Z^n,U(m))_\lambda\to U(m)/U(\lambda)
\]
whose fibres are products of commuting varieties. This generalises naturally the case 
$n=1$ where, for $\lambda=(m-k,k)$, the space $U(m)_\lambda$ is the total space of the adjoint bundle
\[
V_k(\C^m)\times_{U(k)} \mathfrak{u}_k \to \textnormal{Gr}_{k}(\mathbb{C}^m)
\]
(see Section \ref {sec:Miller}). 

Given a $\mathcal{P}$-indexed filtration $(F^\lambda(X))_{\lambda\in \mathcal{P}}$ of a space $X$, we write $F^{<\lambda}(X):=\bigcup_{\mu<\lambda} F^\mu(X)$.

\begin{corollary} \label{cor:subquotients}
For each $\lambda\in \mathcal{P}$ there is a $U(m)$-equivariant homeomorphism
\[
F^{\lambda} \Hom(\Z^n,U(m)) / F^{<\lambda} \Hom(\Z^n,U(m)) \cong U(m)_+ \wedge_{U(\lambda)}  \bigwedge_{a\in I} C_{|a|}(\mathfrak{u}_{\lambda_a})^+\, .
\]
\end{corollary}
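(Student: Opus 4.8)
The plan is to deduce the corollary from Proposition \ref{prop:strata} by analysing the closure of the stratum $\Hom(\Z^n,U(m))_\lambda$ inside $F^\lambda\Hom(\Z^n,U(m))$. First I would observe that, since the filtration is induced by the continuous map $\Psi\co \Hom(\Z^n,U(m))\to \mathcal{P}$ into the poset with the upper topology, we have a set-theoretic decomposition
\[
F^\lambda\Hom(\Z^n,U(m)) = \Hom(\Z^n,U(m))_\lambda \;\sqcup\; F^{<\lambda}\Hom(\Z^n,U(m))\,,
\]
because $\mathcal{P}_{\leq\lambda}=\{\lambda\}\sqcup\mathcal{P}_{<\lambda}$ and the latter is closed. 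Hence the quotient $F^\lambda/F^{<\lambda}$ is the one-point compactification of the open stratum $\Hom(\Z^n,U(m))_\lambda$ relative to $F^\lambda$ — provided the pair is suitably well-behaved (e.g. the inclusion $F^{<\lambda}\hookrightarrow F^\lambda$ is a closed cofibration and $F^{<\lambda}$ is compact, or one argues directly that collapsing $F^{<\lambda}$ gives the one-point compactification of the complement). Since all spaces in sight are compact (closed subspaces of the compact manifold $U(m)^n$), the quotient $F^\lambda/F^{<\lambda}$ is the one-point compactification $(\Hom(\Z^n,U(m))_\lambda)^+$, and this identification is $U(m)$-equivariant because $\Psi$, and hence the whole filtration, is $U(m)$-invariant.

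Next I would feed in the description of the stratum from Proposition \ref{prop:strata}. That proposition gives a $U(m)$-equivariant homeomorphism
\[
\Hom(\Z^n,U(m))_\lambda \;\cong\; U(m)\times_{U(\lambda)} \prod_{a\in I} C_{|a|}(\mathfrak{u}_{\lambda_a})\,.
\]
Since $U(m)/U(\lambda)$ is compact, the one-point compactification of a $U(m)$-associated bundle $U(m)\times_{U(\lambda)} F$ over it, with fibre $F$ a locally compact $U(\lambda)$-space, is the fibrewise one-point compactification, i.e. the Thom-like space $U(m)_+\wedge_{U(\lambda)} F^+$. Applying this with $F=\prod_{a\in I} C_{|a|}(\mathfrak{u}_{\lambda_a})$ and using that the one-point compactification of a finite product is the smash product of the one-point compactifications, $\bigl(\prod_{a\in I} C_{|a|}(\mathfrak{u}_{\lambda_a})\bigr)^+ \cong \bigwedge_{a\in I} C_{|a|}(\mathfrak{u}_{\lambda_a})^+$, yields exactly
\[
(\Hom(\Z^n,U(m))_\lambda)^+ \;\cong\; U(m)_+\wedge_{U(\lambda)} \bigwedge_{a\in I} C_{|a|}(\mathfrak{u}_{\lambda_a})^+\,,
\]
$U(m)$-equivariantly, which combined with the first step is the claim.

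The main obstacle is the first step: justifying that collapsing $F^{<\lambda}$ in $F^\lambda$ genuinely produces the one-point compactification of the open complement, rather than merely a quotient that maps to it. This requires knowing that $F^{<\lambda}\hookrightarrow F^\lambda$ is a cofibration (or at least that the pair is an NDR pair), and that the stratum sits inside $F^\lambda$ as an open subspace whose closure adds only points of $F^{<\lambda}$. The openness is clear from the upper topology on $\mathcal{P}$ (the preimage of $\{\lambda\}$, being the preimage of the complement of the closed set $\mathcal{P}_{<\lambda}$ intersected with $F^\lambda$, is open in $F^\lambda$); the cofibration property should follow from the fact that these are semialgebraic sets — $\Hom(\Z^n,U(m))$ and all the $F^\lambda$ are real algebraic varieties, so the pair $(F^\lambda, F^{<\lambda})$ is triangulable and hence an NDR pair. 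I would state this cofibration fact, perhaps with a pointer to the appendix or to standard semialgebraic triangulation results, and then the rest of the argument is the formal manipulation of one-point compactifications described above.
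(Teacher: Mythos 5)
Your approach is essentially the paper's: collapse $F^{<\lambda}$ inside $F^\lambda$ to get the one-point compactification of the stratum, then feed Proposition~\ref{prop:strata} through the standard identification of the compactified associated bundle with the fibrewise smash product.

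The one correction worth making concerns your final paragraph. The ``main obstacle'' you flag is not actually there: no cofibration or NDR hypothesis is needed to identify $F^\lambda/F^{<\lambda}$ with the one-point compactification of the complement. It is a basic point-set fact that if $X$ is compact Hausdorff and $A\subseteq X$ is closed, then $X/A$ is again compact Hausdorff, and removing the point $[A]$ leaves exactly $X\setminus A$ with its subspace topology; a compact Hausdorff space obtained by adjoining one point to a locally compact Hausdorff space \emph{is} its one-point compactification. Here $F^\lambda\Hom(\Z^n,U(m))$ is a closed subset of $U(m)^n$, hence compact Hausdorff, and $F^{<\lambda}\Hom(\Z^n,U(m))$ is closed in it because $\mathcal{P}_{<\lambda}$ is closed in the upper topology, so this elementary fact applies directly. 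The semialgebraic triangulation machinery you invoke (which the paper does use, but only later in Lemma~\ref{lem:cwstructure} to get CW-structures) is entirely unnecessary for this corollary, and reaching for it here suggests conflating ``quotient by a closed subspace of a compact Hausdorff space equals one-point compactification of the complement'' (always true) with ``quotient by a subspace has the correct homotopy type'' (which does need a cofibration). Only the former is at stake here, since the claim is a homeomorphism, not a homotopy equivalence.
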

\begin{proof}
The space $F^{<\lambda} \Hom(\Z^n,U(m))$ is a closed subspace of the compact Hausdorff space $F^{\lambda} \Hom(\Z^n,U(m))$, hence
\[
F^{\lambda} \Hom(\Z^n,U(m)) / F^{<\lambda} \Hom(\Z^n,U(m))  \cong \Hom(\Z^n,U(m))_\lambda^+\, .
\]
By Proposition \ref{prop:strata} this is homeomorphic to
\[
\left(U(m) \times_{U(\lambda)}  \prod_{a\in I} C_{|a|}(\mathfrak{u}_{\lambda_a}) \right)^+ \cong U(m)_+ \wedge_{U(\lambda)}  \bigwedge_{a\in I} C_{|a|}(\mathfrak{u}_{\lambda_a})^+\, ,
\]
and these identifications are evidently $U(m)$-equivariant.
\end{proof}

Let $f\co \mathcal{P}\to \N$ be the map of posets sending $\lambda\mapsto \sum_{a\in I} \lambda_a |a|$ and let
\[
F^k\Hom(\Z^n,U(m)) = (f \Psi)^{-1}(\N_{\leq k})
\]
be the induced filtration of $\Hom(\Z^n,U(m))$. For $(g_1,\dots,g_n)\in \Hom(\Z^n,U(m))$, the number $(f\Psi)(g_1,\dots,g_n)$ is the total number of eigenvalues of $g_1,\dots,g_n$ that are different from $1$.

\begin{lemma} \label{lem:cwstructure}
For each $k\geq 0$, the space $F^k\Hom(\Z^n,U(m))$ admits a finite CW-structure in such a way that the inclusion
\[
F^{k-1}\Hom(\Z^n,U(m))\to F^k\Hom(\Z^n,U(m))
\]
is the inclusion of a subcomplex. Moreover, the same statement holds for $U(m)$-equivariant CW-structures instead of ordinary ones.
\end{lemma}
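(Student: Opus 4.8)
The plan is to build the CW-structure stratum by stratum, using the homeomorphisms of Proposition~\ref{prop:strata} together with an induction on the poset $\mathcal{P}$ refined by the integer-valued function $f(\lambda)=\sum_{a\in I}\lambda_a|a|$. First I would observe that each stratum $\Hom(\Z^n,U(m))_\lambda$ is, by Proposition~\ref{prop:strata}, the total space of a fibre bundle over the compact smooth manifold $U(m)/U(\lambda)$ with fibre the real affine variety $\prod_{a\in I}C_{|a|}(\mathfrak{u}_{\lambda_a})$. A real affine variety, being a semialgebraic set, admits a finite semialgebraic triangulation; since the bundle is associated to the principal $U(\lambda)$-bundle $U(m)\to U(m)/U(\lambda)$ via a semialgebraic action on the fibre, one can produce a finite CW-structure on the closure $F^\lambda\Hom(\Z^n,U(m))$ in which $F^{<\lambda}\Hom(\Z^n,U(m))$ is a subcomplex --- this is the standard fact that a compact semialgebraic set admits a finite CW-structure compatible with a given finite semialgebraic filtration. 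Applying this to the filtration by the $F^k$ and intersecting with the finitely many strata, one gets a finite CW-structure on $F^k\Hom(\Z^n,U(m))$ with $F^{k-1}$ a subcomplex.

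More carefully, the cleanest route is semialgebraic: $U(m)^n$ is a real algebraic variety, the commutator conditions defining $\Hom(\Z^n,U(m))$ are polynomial, and the function $\Psi$ (hence $f\Psi$) is semialgebraic because the stratification is cut out by conditions on ranks of the matrices $g_i-\BONE$ and their common eigenspaces, all of which are semialgebraic. Therefore the whole nested family $\{F^k\Hom(\Z^n,U(m))\}_k$ together with each $\Hom(\Z^n,U(m))_\lambda$ forms a finite semialgebraic filtration of a compact semialgebraic set, and by the semialgebraic triangulation theorem (in the form compatible with a finite family of semialgebraic subsets) there is a finite simplicial complex realising all of them simultaneously as subcomplexes. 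This immediately yields the first assertion.

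For the equivariant statement I would upgrade the triangulation to a $U(m)$-CW-structure. Here the input is that $\Hom(\Z^n,U(m))$ is a compact $U(m)$-space with only finitely many orbit types (the isotropy groups are centralisers of abelian subgroups, and up to conjugacy there are finitely many relevant conjugacy classes of such centralisers among the $U(\lambda)$ and their over-groups), and that the action is smooth/semialgebraic. One then invokes the existence of a finite equivariant CW-structure on a compact smooth (or semialgebraic) $G$-space with finitely many orbit types, arranged to be compatible with a given finite family of closed invariant semialgebraic subsets; applied to the invariant filtration $\{F^k\}$ this gives the $U(m)$-equivariant version with $F^{k-1}$ an equivariant subcomplex.

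The main obstacle I anticipate is not conceptual but a matter of citing the right form of the triangulation/CW-structure theorems: one needs a triangulation theorem for compact semialgebraic sets that is \emph{compatible with a prescribed finite filtration by closed semialgebraic subsets}, and, for the last sentence, an \emph{equivariant} such statement for a compact smooth $G$-manifold-with-group-action with finitely many orbit types. Both are standard (the non-equivariant one goes back to Łojasiewicz and Hironaka; the equivariant one follows from Illman's equivariant triangulation theorem for smooth $G$-manifolds applied to a closed invariant tubular-neighbourhood decomposition along the strata), but verifying that our $\Psi$-filtration is semialgebraic and $U(m)$-invariant with finitely many orbit types --- which is exactly what Proposition~\ref{prop:strata} provides --- is the step that needs to be spelled out with care.
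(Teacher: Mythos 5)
Your proposal takes essentially the same route as the paper: both observe that $\Hom(\Z^n,U(m))$ is real algebraic and that the filtration is cut out semialgebraically, then appeal to (non-equivariant and then equivariant) semialgebraic triangulation theorems compatible with a prescribed closed subset.

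One technical point worth flagging: in the equivariant step you reach for ``Illman's equivariant triangulation theorem for smooth $G$-manifolds,'' but the spaces $F^k\Hom(\Z^n,U(m))$ are \emph{not} smooth manifolds (they are singular algebraic sets), so that theorem does not directly apply. The paper instead invokes the semialgebraic equivariant triangulation theorem of Park--Suh for semialgebraic $G$-spaces, which produces a triangulation of the orbit space with constant isotropy type on each open simplex, arranges (using the non-equivariant triangulation theorem) that $F^{k-1}/U(m)$ is a union of open simplices, and only then uses Illman's results to pass via barycentric subdivision from compatible triangulations of orbit spaces to compatible equivariant CW-structures. Also, your description of the semialgebraicity of the $F^k$ in terms of ``ranks and common eigenspaces'' is a bit informal; the paper makes it transparent by exhibiting $F^k$ as the zero locus of all $(m-k+1)\times(m-k+1)$ minors of $\mathds{1}-\bigoplus_{i=1}^n g_i$. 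Neither point changes the strategy, but both are places where the write-up would need to be tightened.
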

\begin{proof}
The space $\Hom(\Z^n,U(m))$ is evidently an affine real algebraic variety.

An element $(g_1,\dots,g_n)\in \Hom(\Z^n,U(m))$ belongs to $F^k\Hom(\Z^n,U(m))$ if and only if the matrix $\bigoplus_{i=1}^n g_i\in U(mn)$, i.e., the block sum of $g_1,\dots,g_n$, has at least $m-k$ eigenvalues equal to $1$. It follows that $F^k\Hom(\Z^n,U(m))$ is precisely the subset of $\Hom(\Z^n,U(m))$ cut out by the vanishing of all $(m-k+1)\times(m-k+1)$ minors of $\mathds{1}-\bigoplus_{i=1}^n g_i$. These minors are polynomial expressions in the coordinates of the $g_i$, and so this is an algebraic subset of $\Hom(\Z^n,U(m))$. In the same way we see that $F^{k-1}\Hom(\Z^n,U(m))$ is an algebraic subset of $F^k\Hom(\Z^n,U(m))$.

The existence of CW-structures is now guaranteed by the semialgebraic triangulation theorems. In the non-equivariant case we apply \cite[Theorem 3]{Lojasiewicz}. To obtain an equivariant CW-structure we observe that $F^k\Hom(\Z^n,U(m))$ is a semialgebraic $U(m)$-space in the sense of \cite[Definition 3.1]{ParkSuh3}. Then \cite[Theorem 3.5]{ParkSuh3} yields a semialgebraic triangulation of the orbit space $F^k\Hom(\Z^n,U(m))/U(m)$ in such a way that the $U(m)$-isotropy type is constant across each open simplex in the triangulation. Moreover, using the non-equivariant triangulation theorem, we may assume that $F^{k-1}\Hom(\Z^n,U(m))/U(m)$ is a union of open simplices in this triangulation. By passing to the barycentric subdivision these triangulations give compatible equivariant triangulations of the orbit spaces (see \cite[Theorem 5.5]{Illmann}) which then lift to compatible equivariant CW-structures on $F^k\Hom(\Z^n,U(m))$ and $F^{k-1}\Hom(\Z^n,U(m))$ by \cite[Proposition 6.1]{Illmann}.
\end{proof}

In the subposet $f^{-1}(\N_{\leq k})\subseteq \mathcal{P}$ through which $f\Psi$ factors, the points $\lambda\in \mathcal{P}$ with $f(\lambda)=k$ are open. This implies a homeomorphism
\[
F^k\Hom(\Z^n,U(m))\backslash F^{k-1}\Hom(\Z^n,U(m))\cong \bigsqcup_{\lambda\in f^{-1}(k)} \Hom(\Z^n,U(m))_\lambda\, ,
\]
and consequently a homeomorphism
\[
F^k\Hom(\Z^n,U(m))/ F^{k-1}\Hom(\Z^n,U(m))\cong \bigvee_{\lambda \in f^{-1}(k)} \left( U(m)_+ \wedge_{U(\lambda)}  \bigwedge_{a\in I} C_{|a|}(\mathfrak{u}_{\lambda_a})^+\right)\, .
\]
By Lemma \ref{lem:cwstructure} this quotient admits both a non-equivariant as well as a $U(m)$-equivariant CW-structure in which the wedge point is a $0$-cell. These induce CW-structures on each wedge summand.

\begin{corollary} \label{cor:cwstructure}
For every $\lambda\in \mathcal{P}$ the space $U(m)_+ \wedge_{U(\lambda)}  \bigwedge_{a\in I} C_{|a|}(\mathfrak{u}_{\lambda_a})^+$ admits the structure of a finite CW-complex, and also that of a finite $U(m)$-equivariant CW-complex.
\end{corollary}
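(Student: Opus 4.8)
The plan is to obtain both CW-structures directly from the equivariant semialgebraic triangulation machinery already invoked in the proof of Lemma~\ref{lem:cwstructure}, this time applied to the pair $\bigl(F^{\lambda}\Hom(\Z^n,U(m)),\ F^{<\lambda}\Hom(\Z^n,U(m))\bigr)$, and then to transport the resulting structure across the homeomorphism of Corollary~\ref{cor:subquotients}.

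\textbf{Step 1: semialgebraicity.} First I would verify that $F^{\lambda}\Hom(\Z^n,U(m))$ and $F^{<\lambda}\Hom(\Z^n,U(m))$ are compact, $U(m)$-invariant, semialgebraic subsets of $\Hom(\Z^n,U(m))$. Compactness and $U(m)$-invariance are immediate, since $\Psi$ is $U(m)$-invariant with finite target, so these are closed subsets of the compact space $\Hom(\Z^n,U(m))$. For semialgebraicity, recall from Construction~\ref{cons:construction} that $F^{\lambda}\Hom(\Z^n,U(m))=\tilde{\varphi}\bigl(U(m)\times F^{\lambda}(T^n)\bigr)$, where $F^{\lambda}(T^n)=\bigcup_{\mu\le\lambda}\Psi_T^{-1}(\mu)$; each $\Psi_T^{-1}(\mu)\subseteq T^n\subseteq(S^1)^{nm}$ is a finite union of subsets cut out by conditions of the form $x_{ij}=1$ and $x_{ij}\neq 1$, hence semialgebraic, and since $\tilde{\varphi}$ is a polynomial map, Tarski--Seidenberg gives that the image is semialgebraic. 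The same applies to $F^{<\lambda}\Hom(\Z^n,U(m))=\bigcup_{\mu<\lambda}F^{\mu}\Hom(\Z^n,U(m))$.

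\textbf{Step 2: triangulation.} With Step 1 in hand I would repeat the argument of Lemma~\ref{lem:cwstructure} for this pair: \cite[Theorem 3]{Lojasiewicz} furnishes a finite CW-structure on $F^{\lambda}\Hom(\Z^n,U(m))$ in which $F^{<\lambda}\Hom(\Z^n,U(m))$ is a subcomplex; for the equivariant statement, $F^{\lambda}\Hom(\Z^n,U(m))$ is a semialgebraic $U(m)$-space, so \cite[Theorem 3.5]{ParkSuh3} triangulates the orbit space $F^{\lambda}\Hom(\Z^n,U(m))/U(m)$ compatibly with $F^{<\lambda}\Hom(\Z^n,U(m))/U(m)$ and with constant $U(m)$-isotropy type on each open simplex, and passing to the barycentric subdivision and lifting via \cite[Theorem 5.5]{Illmann} and \cite[Proposition 6.1]{Illmann} yields a finite $U(m)$-equivariant CW-structure with $F^{<\lambda}\Hom(\Z^n,U(m))$ an equivariant subcomplex.

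\textbf{Step 3: conclusion.} The quotient $F^{\lambda}\Hom(\Z^n,U(m))/F^{<\lambda}\Hom(\Z^n,U(m))$ of a finite (equivariant) CW-complex by a subcomplex is again a finite (equivariant) CW-complex, with the collapsed point a $0$-cell, and Corollary~\ref{cor:subquotients} identifies it $U(m)$-equivariantly with $U(m)_+\wedge_{U(\lambda)}\bigwedge_{a\in I}C_{|a|}(\mathfrak{u}_{\lambda_a})^+$; transporting the CW-structure along this homeomorphism proves the corollary. Running the same argument with a triangulation of $F^{k}\Hom(\Z^n,U(m))$ taken compatible with all $F^{\lambda}\Hom(\Z^n,U(m))$, $\lambda\in f^{-1}(k)$, also makes rigorous the assertion preceding the corollary that the CW-structure on $F^{k}/F^{k-1}$ restricts to each wedge summand. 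I expect the only genuinely non-formal point to be Step 1 together with checking that the cited equivariant semialgebraic triangulation theorems apply to these particular subspaces; once that bookkeeping is in place, Steps 2 and 3 are routine.
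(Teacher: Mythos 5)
Your proof is correct and uses the same basic machinery (Lojasiewicz, Park--Suh, Illman) as the paper, but applies it to a slightly different pair. The paper's own argument deduces the corollary from Lemma~\ref{lem:cwstructure}, which triangulates the coarser $\N$-indexed pair $(F^{k},F^{k-1})$ by observing that these subspaces are cut out by the vanishing of minors, hence are real algebraic subsets of $\Hom(\Z^n,U(m))$; it then passes to the quotient $F^k/F^{k-1}$ and asserts that the resulting CW-structure induces one on each wedge summand. You instead triangulate the finer $\mathcal{P}$-indexed pair $(F^{\lambda},F^{<\lambda})$ directly and apply Corollary~\ref{cor:subquotients}. The price is that $F^\lambda$ is not obviously cut out by polynomial equations, so you need Tarski--Seidenberg to establish semialgebraicity via $F^{\lambda}\Hom(\Z^n,U(m))=\tilde{\varphi}\bigl(U(m)\times F^{\lambda}(T^n)\bigr)$ -- a valid step, since $\tilde{\varphi}$ is polynomial (note $g^{-1}=g^{\ast}$ on $U(m)$) and $F^{\lambda}(T^n)$ is a finite union of sets defined by equalities $x_{ij}=1$ and inequalities $x_{ij}\neq 1$. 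The gain is that you avoid having to justify, as the paper does only implicitly, that the CW-structure on $F^k/F^{k-1}$ is compatible with the wedge decomposition into $\lambda$-summands; your closing remark about triangulating $F^k$ compatibly with all $F^\lambda$, $\lambda\in f^{-1}(k)$, correctly identifies that this requires (and can be supplied by) a triangulation refining the full family of subspaces. Both routes land in the same place; yours is marginally more self-contained at this particular point.
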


\section{The stable splitting}\label{sec:splitting}

We now turn to the stable splitting for $\Hom(\Z^n,U(m))$. We begin by briefly explaining the general idea. 

\subsection{Sketch of idea}

It is well-known \cite{Baird} that the action map (\ref{eq:actionmapliegroup}) is an isomorphism in homology with coefficients in $\Z[1/m!]$. After inverting $m!$, this  implies a stable equivalence
\[
 (U(m)/T\times_{\Sigma_m} T^n)_+  \simeq \Hom(\Z^n,U(m))_+\, .
\]
The fat wedge filtration of the torus $T^n_+\cong (S^1)^{mn}_+$ with its permutation action by $\Sigma_{mn}$ admits a $\Sigma_{mn}$-equivariant stable splitting (see e.g. \cite[Proposition 5.2]{CHM}). By restriction to the diagonal subgroup $\Sigma_m\leq \Sigma_{mn}$ one obtains a $\Sigma_m$-equivariant stable splitting of the filtration $(F^\lambda(T^n_+))_{\lambda\in \mathcal{P}}$ defined in Section \ref{sec:construction}. This implies a stable equivalence
\[
(U(m)/T \times_{\Sigma_m} T^n)_+\simeq \bigvee_{\lambda\in \mathcal{P}} (U(m)/T)_+\wedge_{\Sigma_m} F^\lambda(T^n_+)/F^{<\lambda}(T^n_+)\, .
\]
Finally, via the Cayley transform there are maps for all $\lambda\in \mathcal{P}$,
\[
 (U(m)/T)_+\wedge_{\Sigma_m} F^\lambda(T^n_+)/F^{<\lambda}(T^n_+)\to U(m)_+\wedge_{U(\lambda)}  \bigwedge_{a\in I} C_{|a|}(\mathfrak{u}_{\lambda_a})^+\,,
\]
and these are isomorphisms in homology with coefficients in $\Z[1/m!]$ (see Section \ref{sec:commutingvariety}).

\subsection{Statement and corollaries}

What we shall actually prove is the following slightly refined statement from which the above will follow:
\begin{theorem} \label{thm:splitting1}
Let $\lambda\in \mathcal{P}$. After inverting all primes $p\leq \max\{\lambda_a\mid a\in I,\,  |a|> 1\}$ the quotient map
\[
\zeta_\lambda\co F^\lambda\Hom(\Z^n,U(m))_+\to U(m)_+\wedge_{U(\lambda)} \bigwedge_{a\in I} C_{|a|}(\mathfrak{u}_{\lambda_a})^+
\]
collapsing $F^{< \lambda} \Hom(\Z^n,U(m))_+$ admits a stable section up to homotopy.
\end{theorem}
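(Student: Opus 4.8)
The plan is to reduce the statement, fibrewise over the flag manifold $U(m)/U(\lambda)$, to two separate splitting problems: one for the ``single'' coordinates of $\lambda$, handled integrally by Miller's Pontryagin--Thom construction, and one for the ``multiple'' coordinates, handled after inverting the relevant factorials. \emph{Step 1 (a fibrewise model for $F^\lambda$).} Set $G^\lambda:=U(m)\times_{U(\lambda)}\prod_{a\in I}\Hom(\Z^{|a|},U(\lambda_a))$ and let $\pi_\lambda\co G^\lambda\to F^\lambda\Hom(\Z^n,U(m))$ be the map that conjugates the block sum $\bigoplus_{a\in I}\textnormal{sh}_a(-)$ of the shuffled tuples, in the notation of the proof of Proposition~\ref{prop:strata}. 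Arguing exactly as there, $\pi_\lambda$ is a continuous surjection, it carries the ``fat wedge complement'' $G^{<\lambda}\subseteq G^\lambda$ — the closed subspace where at least one factor lies in the lower filtration $F^{<\textnormal{top}}\Hom(\Z^{|a|},U(\lambda_a))$ — into $F^{<\lambda}\Hom(\Z^n,U(m))$, and it restricts on the open top strata to the homeomorphism of Proposition~\ref{prop:strata} (using the Cayley transform of Definition~\ref{def:cayley} to identify the top stratum of $\Hom(\Z^{|a|},U(\lambda_a))$ with $C_{|a|}(\mathfrak{u}_{\lambda_a})$). Passing to quotients of compact Hausdorff spaces, $\pi_\lambda$ then induces a homeomorphism $\bar\pi_\lambda\co G^\lambda/G^{<\lambda}\xrightarrow{\ \cong\ } F^\lambda\Hom(\Z^n,U(m))/F^{<\lambda}\Hom(\Z^n,U(m))$ — both sides being $\Hom(\Z^n,U(m))_\lambda^+$ by Corollary~\ref{cor:subquotients} — which fits into a commuting square with the collapse maps $\zeta^G_\lambda\co G^\lambda_+\to G^\lambda/G^{<\lambda}$ and $\zeta_\lambda$. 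Hence a stable section of $\zeta^G_\lambda$ transports to one of $\zeta_\lambda$, and it suffices to split $\zeta^G_\lambda$.

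\emph{Step 2 (splitting $\zeta^G_\lambda$ coordinatewise).} Since $G^\lambda_+\cong U(m)_+\wedge_{U(\lambda)}\bigwedge_{a\in I}\Hom(\Z^{|a|},U(\lambda_a))_+$ and under this identification $\zeta^G_\lambda=U(m)_+\wedge_{U(\lambda)}\bigwedge_{a\in I}\zeta^{(a)}$, where $\zeta^{(a)}\co\Hom(\Z^{|a|},U(\lambda_a))_+\to C_{|a|}(\mathfrak{u}_{\lambda_a})^+$ is the collapse onto the top filtration quotient, it is enough to produce, for each $a\in I$, a $U(\lambda_a)$-equivariant stable section $s_a$ of $\zeta^{(a)}$; then $\bigwedge_a s_a$ is a $U(\lambda)$-equivariant section of $\bigwedge_a\zeta^{(a)}$, and applying $U(m)_+\wedge_{U(\lambda)}(-)$ produces the section of $\zeta^G_\lambda$. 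When $|a|\le 1$ I would do this \emph{integrally}: for $|a|=0$ the map $\zeta^{(a)}$ is the identity of $S^0$, and for $|a|=1$ it is exactly the collapse $U(\lambda_a)_+\to U(\lambda_a)/F^{\lambda_a-1}(U(\lambda_a))\cong\mathfrak{u}_{\lambda_a}^+$, which is split by the $U(\lambda_a)$-equivariant Pontryagin--Thom section recalled in Section~\ref{sec:Miller}. This is precisely why no primes dividing $\lambda_a$ with $|a|\le 1$ need to be inverted.

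\emph{Step 3 (the case $|a|>1$).} Here I would recover $s_a$ from the three ingredients used to sketch Theorem~\ref{thm:main1}. Write $T'\subseteq U(\lambda_a)$ for the diagonal torus, with Weyl group $\Sigma_{\lambda_a}$; the top filtration quotient of $(T')^{|a|}$ with respect to $\Psi_{T'}$ is $(S^{|a|})^{\wedge\lambda_a}$ with $\Sigma_{\lambda_a}$ permuting the smash factors. The collapse $\zeta^{(a)}$ then agrees, pre- and post-composed with the equivalences below, with the composite
\[
\Hom(\Z^{|a|},U(\lambda_a))_+\xleftarrow{\ \varphi\ }\bigl(U(\lambda_a)/T'\times_{\Sigma_{\lambda_a}}(T')^{|a|}\bigr)_+\longrightarrow (U(\lambda_a)/T')_+\wedge_{\Sigma_{\lambda_a}}(S^{|a|})^{\wedge\lambda_a}\longrightarrow C_{|a|}(\mathfrak{u}_{\lambda_a})^+,
\]
where $\varphi$ is the action map, the middle arrow collapses the lower filtration of $(T')^{|a|}$, and the last arrow is the Cayley/adjoint action map. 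By \cite{Baird} the left arrow is a $\Z[1/\lambda_a!]$-homology isomorphism, and by the appendix (Section~\ref{sec:commutingvariety}) so is the right arrow; hence after inverting $\lambda_a!$ both become stable equivalences, and all three maps are $U(\lambda_a)$-equivariant, the torus constructions carrying the residual conjugation action on $U(\lambda_a)/T'$. The middle collapse is split $\Sigma_{\lambda_a}$-equivariantly, since the fat wedge filtration of $(T')^{|a|}\cong(S^1)^{\lambda_a|a|}$ admits a $\Sigma_{\lambda_a|a|}$-equivariant (hence $\Sigma_{\lambda_a}$-equivariant) stable splitting \cite{CHM} which refines to one of the $\Psi_{T'}$-filtration, as in the sketch of Section~\ref{sec:splitting}. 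Composing yields a $U(\lambda_a)$-equivariant stable section of $\zeta^{(a)}$ over $\Z[1/\lambda_a!]$, i.e. after inverting the primes $\le\lambda_a$. Since primes get inverted only for the indices $a$ with $|a|>1$, the union of these localisations inverts exactly the primes $\le\max\{\lambda_a\mid a\in I,\,|a|>1\}$, as in the statement.

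The part I expect to be the real work is Step~3: checking that the displayed composite genuinely represents $\zeta^{(a)}$ after inverting $\lambda_a!$ (homotopy-commutativity of the square of collapse maps, together with the Baird-type identifications), carrying the $U(\lambda_a)$-equivariance through the smash with $U(\lambda_a)/T'$ and through the appendix's homology isomorphism, and the bookkeeping — possibly with representation-graded suspensions — needed to make ``$U(m)_+\wedge_{U(\lambda)}(-)$ of an equivariant stable map'' rigorous (which is harmless because $U(m)$ is a free $U(\lambda)$-space, so this functor preserves underlying equivalences). Step~1 is routine given Proposition~\ref{prop:strata} and Corollary~\ref{cor:subquotients}, but it should be done carefully, since it is exactly what ties the prime bound to the ``multiple'' parts of $\lambda$ rather than to all of $m!$.
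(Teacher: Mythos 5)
Your overall route matches the paper's: approximate the commuting variety $C_{|a|}(\mathfrak{u}_{\lambda_a})^+$ by the Thom space of a torus bundle over $U(\lambda_a)/N(T(\lambda_a))$, invoke the CHM $\Sigma$-equivariant splitting of the fat wedge filtration of the torus plus Miller's Pontryagin--Thom section for the $|a|=1$ factors, transfer equivariant sections through the Borel construction, and use Baird's action map together with Lemma~\ref{lem:actionmapliealgebra} to identify the approximation with the commuting variety after inverting the appropriate factorials. Your Step~1, rather than the paper's direct construction of $\varphi_\lambda$, is routine and correct; it is a cosmetic difference in packaging.

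The gap is in the interface between your Steps~2 and~3. Step~2 requires, for each $a$, a genuine $U(\lambda_a)$-equivariant stable section $s_a$ of the collapse $\zeta^{(a)}\co \Hom(\Z^{|a|},U(\lambda_a))_+\to C_{|a|}(\mathfrak{u}_{\lambda_a})^+$, so that $\bigwedge_a s_a$ is $U(\lambda)$-equivariant and can be fed through the functor $U(m)_+\wedge_{U(\lambda)}(-)$. But the $s_a$ you build in Step~3 is the composite
\[
C_{|a|}(\mathfrak{u}_{\lambda_a})^+ \xrightarrow{(\phi_a^+)^{-1}} \mathfrak{X}_a \xrightarrow{\ \hat s\ } \bigl(U(\lambda_a)/T'\times_{\Sigma_{\lambda_a}}(T')^{|a|}\bigr)_+ \xrightarrow{\ \varphi\ } \Hom(\Z^{|a|},U(\lambda_a))_+\,,
\]
and here $\phi_a^+$ (and likewise Baird's $\varphi$) is a $U(\lambda_a)$-equivariant map which becomes a \emph{non-equivariant} stable equivalence after inverting $\lambda_a!$. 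Inverting it produces a stable map with no reason to be equivariant: $U(\lambda_a)$ acts on both sides non-freely (the isotropy is of maximal rank), so an underlying equivalence between $U(\lambda_a)$-CW complexes need not be an equivariant equivalence, and you have not verified the fixed-point condition that would make $(\phi_a^+)^{-1}$ an equivariant stable map. Thus the object $\bigwedge_a s_a$ is not obviously available, and ``applying $U(m)_+\wedge_{U(\lambda)}(-)$'' to it is not well defined. This is not merely bookkeeping ``real work'' as you suggest; it is the step the paper is structured to avoid.

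The paper's fix is to separate the two roles cleanly. The only map it inverts equivariantly is the torus collapse $\pi_a\co X_a\to\mathfrak{X}_a$, whose section is genuinely equivariant (it comes from a $\Sigma_{\lambda_a}$-equivariant map $V^+\wedge(\mathfrak{t}^+)^{\wedge|a|}\to V^+\wedge (T')^{|a|}_+$ and is upgraded through Lemma~\ref{lem:stablesection}). The homology isomorphisms $\phi_a^+$ and the Baird action map are then inverted only \emph{after} the Borel construction $U(m)_+\wedge_{U(\lambda)}(-)$ has been applied, where a non-equivariant stable inverse $\alpha$ suffices; the commuting square in Section~\ref{sec:proof} arranges that $\varphi_\lambda\,\tilde\sigma_\lambda\,\alpha$ is a section of $\zeta_\lambda$ without ever producing an equivariant section of any $\zeta^{(a)}$. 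To repair your write-up, replace the target of Step~3 by: a $U(\lambda_a)$-equivariant stable map $\sigma_a\co\mathfrak{X}_a\to\Hom(\Z^{|a|},U(\lambda_a))_+$ with $\zeta^{(a)}\sigma_a\simeq \phi_a^+$ (this exists, using only the equivariant section of $\pi_a$ and the commutativity of the Cayley square), and in Step~2 postcompose $U(m)_+\wedge_{U(\lambda)}\bigwedge_a\sigma_a$ with a non-equivariant stable inverse of $U(m)_+\wedge_{U(\lambda)}\bigwedge_a\phi_a^+$, which is an equivalence after inverting the primes in $Q(\lambda)$ by Lemma~\ref{lem:approximation}.
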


We will prove the theorem in Section \ref{sec:proof}. Let us deduce Theorem \ref{thm:main1} as a corollary:

\begin{corollary} \label{cor:splittingq}
After inverting $m!$ there is a stable equivalence for all integers $n\geq 2$,
\[
\Hom(\Z^n,U(m))_{+}\simeq  \bigvee_{\lambda\in \mathcal{P}} \left(U(m)_+\wedge_{U(\lambda)} \bigwedge_{a\in I} C_{|a|}(\mathfrak{u}_{\lambda_a})^+ \right)\, .
\]
\end{corollary}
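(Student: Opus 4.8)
The plan is to derive Corollary~\ref{cor:splittingq} formally from Theorem~\ref{thm:splitting1} by assembling the individual stable sections along the linear refinement of the filtration given by the poset map $f\co\mathcal{P}\to\N$, $\lambda\mapsto\sum_{a\in I}\lambda_a|a|$ (Example~\ref{ex:fatwedgetn}). Write $F^\lambda=F^\lambda\Hom(\Z^n,U(m))$, $F^{<\lambda}=F^{<\lambda}\Hom(\Z^n,U(m))$, $F^k=F^k\Hom(\Z^n,U(m))$, and $D_\lambda=U(m)_+\wedge_{U(\lambda)}\bigwedge_{a\in I}C_{|a|}(\mathfrak{u}_{\lambda_a})^+$. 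First observe that $m!$ is enough to make all the relevant sections available at once: every $\lambda\in\mathcal{P}$ satisfies $\lambda_a\le m$ for all $a\in I$, so $\max\{\lambda_a\mid a\in I,\ |a|>1\}\le m$, and inverting $m!$ inverts every prime $p\le m$. Hence, after inverting $m!$, each quotient map $\zeta_\lambda\co F^\lambda_+\to D_\lambda$ admits a stable section $s_\lambda$ up to homotopy. Composing $s_\lambda$ with the inclusion $F^\lambda_+\hookrightarrow (F^{f(\lambda)})_+$ and taking the wedge over all $\lambda$ with $f(\lambda)\le k$, one gets for each $k\ge 0$ a stable map $\tau_k\co\bigvee_{f(\lambda)\le k}D_\lambda\to (F^k)_+$; these are compatible with the inclusions $F^{k-1}\hookrightarrow F^k$, which by Lemma~\ref{lem:cwstructure} are inclusions of subcomplexes of finite CW-complexes and hence cofibrations. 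So $\tau_{k-1}$ and $\tau_k$ fit into a map of cofiber sequences whose third term is the map $\Sigma^{(k)}\co\bigvee_{f(\lambda)=k}D_\lambda\to (F^k)_+/(F^{k-1})_+\cong\bigvee_{f(\lambda)=k}D_\lambda$ induced on quotients, the last identification being the homeomorphism recorded before Corollary~\ref{cor:cwstructure}.

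The key point is that $\Sigma^{(k)}$ is a stable equivalence after inverting $m!$, and this reduces to a triangularity statement. Since $\mu<\lambda$ in $\mathcal{P}$ means $M(\mu)$ is obtained from $M(\lambda)$ by permuting rows and changing some $1$'s to $0$'s, and $f(\lambda)$ equals the number of $1$'s in $M(\lambda)$, the map $f$ is \emph{strictly} monotone. Consequently, for $\lambda$ with $f(\lambda)=k$ one has $F^{<\lambda}=F^\lambda\cap F^{k-1}$ (as closed subspaces they have the same points), so the composite $F^\lambda_+\hookrightarrow(F^k)_+\to(F^k)_+/(F^{k-1})_+$ followed by the projection onto the summand $D_\lambda$ is exactly $\zeta_\lambda$, while followed by the projection onto $D_\mu$ for any other $\mu$ with $f(\mu)=k$ it is the constant map, since then $\mu\not\le\lambda$ and the stratum $\Hom(\Z^n,U(m))_\mu$ is disjoint from $F^\lambda$. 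Thus, regarded as a matrix of maps between the finitely many wedge summands at level $k$, $\Sigma^{(k)}$ has null off-diagonal entries and diagonal entries $\zeta_\lambda\circ s_\lambda\simeq\mathrm{id}_{D_\lambda}$, so it is a stable equivalence after inverting $m!$.

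It then remains to run the induction on $k$: $\tau_0$ is an equivalence (the set $f^{-1}(0)$ consists only of the partition concentrated at $(0,\dots,0)$, and $\tau_0$ is a stable section of the identity map $S^0=(F^0)_+\to S^0$), and if $\tau_{k-1}$ is a stable equivalence after inverting $m!$ then so is $\tau_k$, since the rows are cofiber sequences and the third vertical map $\Sigma^{(k)}$ is an equivalence. Taking $k=nm$, which is the top of the $f$-filtration so that $F^{nm}=\Hom(\Z^n,U(m))$, yields that $\tau_{nm}\co\bigvee_{\lambda\in\mathcal{P}}D_\lambda\to\Hom(\Z^n,U(m))_+$ is a stable equivalence after inverting $m!$; recalling $U(\lambda)=\prod_{a\in I}U(\lambda_a)$, this is the asserted splitting. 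All the substance sits in Theorem~\ref{thm:splitting1}; the only thing that needs care here is the triangularity bookkeeping, i.e.\ verifying that passing to the linear filtration indexed by $f$ genuinely separates the strata level by level — which is precisely where the strict monotonicity of $f$ is used.
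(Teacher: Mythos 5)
Your argument is correct and is essentially the paper's own proof: same $f$-graded refinement of the filtration, same use of Theorem~\ref{thm:splitting1} summand by summand at each level $k$, and the same induction on $k$. The only difference is that you spell out the step the paper compresses into ``by construction, this is a stable splitting of the cofibre sequence,'' via the triangularity observation that, for $\mu\neq\lambda$ with $f(\mu)=f(\lambda)=k$, one has $\mu\not\le\lambda$ and hence $\Hom(\Z^n,U(m))_\mu\cap F^\lambda=\emptyset$, so the off-diagonal composites into $D_\mu$ are null while the diagonal ones are $\zeta_\lambda\circ s_\lambda\simeq\mathrm{id}$.
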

\begin{proof}
We consider the filtration of $\Hom(\Z^n,U(m))$ defined in Lemma \ref{lem:cwstructure}. For each $k\leq mn$, the lemma and the subsequent discussion yield a cofibre sequence
\begin{equation*} \label{eq:cofibresequenceq}
F^{k-1}\Hom(\Z^n,U(m))_+ \to F^k\Hom(\Z^n,U(m))_+ \to \bigvee_{\lambda\in f^{-1}(k)} \Hom(\Z^n,U(m))_\lambda^+\, .
\end{equation*}

Now assume that all spaces are localised so as to invert $m!$. By Theorem \ref{thm:splitting1} we can choose, for each $\lambda\in f^{-1}(k)$, a stable section up to homotopy
\[
\sigma_\lambda \co \Hom(\Z^n,U(m))_\lambda^+\to F^\lambda \Hom(\Z^n,U(m))_+
\]
of the projection $\zeta_\lambda$. By taking wedge sum we have a stable map
\[
\bigvee_{\lambda\in f^{-1}(k)} \Hom(\Z^n,U(m))_\lambda^+\xrightarrow{\bigvee_\lambda \sigma_\lambda} \bigvee_{\lambda\in f^{-1}(k)} F^\lambda\Hom(\Z^n,U(m))_{+}\to F^k\Hom(\Z^n,U(m))_+
\]
in which the second map is the sum over the various inclusions. By construction, this is a stable splitting, up to homotopy, of the cofibre sequence above. Induction on $k\leq mn$ yields the asserted equivalence.
\end{proof}

\begin{example} \label{ex:su2}
Consider $\Hom(\Z^2,U(2))$. The stable summands 
that appear in Corollary \ref{cor:splittingq} are listed in 
Table \ref{tbl:homz2u2}. All the stable summands are wedges of spheres, even integrally, except for $C_{2}(\uf_{2})^{+}$. Away from $2$,
\[
C_2(\mathfrak{u}_2)^+\simeq S^4
\]
by Theorem \ref{thm:char0} and the fact that $C_2(\mathfrak{u}_2)^+$ is simply-connected. This may be explained in more geometric terms: First, one has $C_2(\mathfrak{u}_2)^+\cong \Sigma^2 C_2(\mathfrak{su}_2)^+$ by Proposition \ref{prop:centresuspension}. By observing that two elements of $\mathfrak{su}_2$ commute if and only if they are linearly dependent, one sees that $C_2(\mathfrak{su}_2)$ is the quotient of twice the tautological line bundle $L\to \R P^2$ by its zero section. This implies
\[
C_2(\mathfrak{su}_2)^+\cong S(2 L)^{\lozenge}\,,
\]
the unreduced suspension of the sphere bundle in $2 L$. It is explained in \cite[Section 4]{Crabb} that this space is stably equivalent to $S^2\vee \Sigma^2\R P^2$, and so $C_2(\mathfrak{su}_2)^+\simeq S^2$ away from $2$. As a result, there is a stable equivalence away from $2$
\[
\Hom(\Z^2,U(2))_+\simeq S^{0}\vee (\bigvee^{2} S^{1})
\vee (\bigvee^{2} S^{2})\vee (\bigvee^{4} S^{3})\vee (\bigvee^{5} S^{4})\vee (\bigvee^{2} S^{5})\, .
\]

\begin{table}[h]
\def\arraystretch{1.5}
\begin{tabular}{|l|l|l|l|l|}
\hline
(0,0) & (0,1) & (1,0) & (1,1)  & Stable summand \\ \hline\hline
 $2$ & $0$ & $0$ & $0$   & $S^0$  \\ \hline
$0$ & $2$ & $0$ & $0$   & $C_{1}(\uf_{2})^{+}\cong S^{4}$ \\ \hline
$0$ & $0$ & $2$ & $0$   & $C_{1}(\uf_{2})^{+}\cong  S^{4}$ \\ \hline
 $0$ & $0$ & $0$ & $2$   & $C_{2}(\uf_{2})^{+}$ \\ \hline
$1$ & $1$ & $0$ & $0$   & $(U(2)/U(1)\times_{U(1)}C_{1}(\uf_{1}))^{+}\cong S^{1}\vee S^{3}$ \\ \hline
$1$ & $0$ & $1$ & $0$   & $(U(2)/U(1)\times_{U(1)}C_{1}(\uf_{1}))^{+}\cong  S^{1}\vee S^{3}$ \\ \hline
$1$ & $0$ & $0$ & $1$   & $(U(2)/U(1)\times_{U(1)}C_{2}(\uf_{1}))^{+}\cong  S^{2}\vee S^{4}$ \\ \hline
$0$ & $1$ & $1$ & $0$   & $(U(2)/U(1)\times_{U(1)}C_{2}(\uf_{1}))^{+}\cong S^{2}\vee S^{4}$ \\ \hline
$0$ & $1$ & $0$ & $1$   & $(U(2)\times_{U(1)\times U(1)}C_{1}(\uf_{1})\times C_{2}(\uf_{1}))^{+}\cong S^{3}\vee S^{5}$ \\ \hline
$0$ & $0$ & $1$ & $1$  & $(U(2)\times_{U(1)\times U(1)}C_{1}(\uf_{1})\times C_{2}(\uf_{1}))^{+}\cong  S^{3}\vee S^{5}$ \\ \hline
\end{tabular}
\caption{The table shows the stable summands of $\Hom(\Z^2,U(2))_+$. The partitions are indexed by $\{0,1\}^2$ and their parts are listed in the corresponding columns. } \label{tbl:homz2u2}
\end{table}
\end{example}

We can use Theorem \ref{thm:splitting1} to identify the summands in the stable splitting of $\Hom(\Z^n,U(m))$ proved by Adem and Cohen \cite[Theorem 6.5]{AC}. These are of the form
\[
\Hom(\Z^n,U(m))/S_n(U(m))
\]
where $S_n(U(m))\subseteq \Hom(\Z^n,U(m))$ is the subspace for which at least one coordinate is the identity.

Let $\mathcal{S}\subseteq \mathcal{P}$ be the subset defined by
\[
\mathcal{S}=\{\lambda\in \mathcal{P} \mid \sum_{a\in I} \lambda_a a(i)\neq 0\textnormal{ for all }1\leq i \leq n  \}\, .
\]
Observe that if $\lambda\in \mathcal{S}$, then $\Hom(\Z^n,U(m))_\lambda\subseteq \Hom(\Z^n,U(m))\backslash S_n(U(m))$.

\begin{corollary} \label{cor:summandsinAC}
After inverting $m!$ there is a stable equivalence
\[
\Hom(\Z^n,U(m))/S_n(U(m))\simeq \bigvee_{\lambda\in \mathcal{S}} \left(U(m)_+\wedge_{U(\lambda)} \bigwedge_{a\in I} C_{|a|}(\mathfrak{u}_{\lambda_a})^+\right)\, .
\]
\end{corollary}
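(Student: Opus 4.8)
The plan is to replay the proof of Corollary~\ref{cor:splittingq} with $\Hom(\Z^n,U(m))$ replaced by the filtered quotient $\Hom(\Z^n,U(m))/S_n(U(m))$. Throughout, $m!$ is inverted so that every section supplied by Theorem~\ref{thm:splitting1} is available; write $\Hom=\Hom(\Z^n,U(m))$ and $S_n=S_n(U(m))$, and $\Hom_\lambda=\Hom(\Z^n,U(m))_\lambda$.

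First I would settle the poset combinatorics. By the remark preceding the corollary, $\Hom_\lambda\subseteq S_n$ when $\lambda\notin\mathcal{S}$ and $\Hom_\lambda\cap S_n=\emptyset$ when $\lambda\in\mathcal{S}$, so $S_n=\Psi^{-1}(\mathcal{P}\setminus\mathcal{S})$ is a union of strata. Moreover $\mathcal{P}\setminus\mathcal{S}$ is downward closed: for $\mu\leq\lambda$ the matrix $M(\mu)$ is obtained from $M(\lambda)$ by turning some entries $1$ into $0$, so $\sum_{a\in I}\mu_a a(i)\leq\sum_{a\in I}\lambda_a a(i)$ for each $i$, whence $\lambda\notin\mathcal{S}$ forces $\mu\notin\mathcal{S}$. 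Consequently $S_n$ is a closed semialgebraic $U(m)$-subspace, Lemma~\ref{lem:cwstructure} applies to $S_n$ and to the intersections $F^k\Hom\cap S_n$, and the $f$-filtration of $\Hom$ restricts to one of $S_n$ by $U(m)$-equivariant subcomplexes. Setting $F^k(\Hom/S_n):=F^k\Hom/(F^k\Hom\cap S_n)$, the subquotient computation of Section~\ref{sec:filtration} yields cofibre sequences
\[
F^{k-1}(\Hom/S_n)_+\longrightarrow F^{k}(\Hom/S_n)_+\longrightarrow\bigvee_{\lambda\in f^{-1}(k)\cap\mathcal{S}}\Hom_\lambda^+ ,
\]
the last term recording exactly the strata of filtration level $k$ that avoid $S_n$.

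The crux is that the sections of Theorem~\ref{thm:splitting1} survive passage to the quotient. Fix $\lambda\in\mathcal{S}$. Any $\mu\leq\lambda$ with $\mu\notin\mathcal{S}$ satisfies $\mu\neq\lambda$, so $F^\lambda\Hom\cap S_n=\Psi^{-1}(\mathcal{P}_{\leq\lambda}\cap(\mathcal{P}\setminus\mathcal{S}))\subseteq\Psi^{-1}(\{\mu\in\mathcal{P}\mid\mu<\lambda\})=F^{<\lambda}\Hom$. Hence the collapse map $\zeta_\lambda\colon F^\lambda\Hom_+\to\Hom_\lambda^+$ factors through $F^\lambda(\Hom/S_n)_+$, and composing the section $\sigma_\lambda$ of $\zeta_\lambda$ with $F^\lambda\Hom_+\to F^\lambda(\Hom/S_n)_+\hookrightarrow F^k(\Hom/S_n)_+$, where $k=f(\lambda)$, produces a stable section up to homotopy of the projection onto the $\lambda$-summand above. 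Feeding the wedge of these sections into the inductive splitting of Corollary~\ref{cor:splittingq} shows that each of the cofibre sequences splits, hence $\Hom(\Z^n,U(m))/S_n(U(m))\simeq\bigvee_{\lambda\in\mathcal{S}}\Hom_\lambda^+$; identifying $\Hom_\lambda^+$ with $U(m)_+\wedge_{U(\lambda)}\bigwedge_{a\in I}C_{|a|}(\uf_{\lambda_a})^+$ via Proposition~\ref{prop:strata} and Corollary~\ref{cor:subquotients} gives the statement.

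The only ingredient beyond Corollary~\ref{cor:splittingq} is the inclusion $F^\lambda\Hom\cap S_n\subseteq F^{<\lambda}\Hom$ for $\lambda\in\mathcal{S}$, which is the reason the sections descend; everything else is bookkeeping, so I do not expect a genuine obstacle. (Alternatively one can split $S_{n,+}$ and $\Hom_+$ compatibly—using for $\lambda\notin\mathcal{S}$ the same $\sigma_\lambda$, which already lands in $S_{n,+}$ because $F^\lambda\Hom\subseteq S_n$ in that case—and then take cofibres; the care needed in that variant is precisely to check that the two inductively built splittings agree on the common wedge summands.)
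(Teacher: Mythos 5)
Your proposal is correct and follows essentially the same route as the paper: filter the quotient $\Hom(\Z^n,U(m))/S_n(U(m))$ by the pushforward of the $f$-filtration, identify the filtration quotients with the wedge of strata $\Hom_\lambda^+$ for $\lambda\in\mathcal{S}\cap f^{-1}(k)$, and lift the sections of Theorem~\ref{thm:splitting1} to split the cofibre sequences. Your explicit verification that $\mathcal{P}\setminus\mathcal{S}$ is downward closed and the resulting inclusion $F^\lambda\Hom\cap S_n\subseteq F^{<\lambda}\Hom$ for $\lambda\in\mathcal{S}$ are exactly the points the paper leaves implicit when asserting that the sections descend.
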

\begin{proof}
By definition of $\mathcal{S}$ we have that
\[
S_n(U(m))=\bigcup_{\lambda\in \mathcal{P}\backslash \mathcal{S} } F^\lambda\Hom(\Z^n,U(m))\, .
\]
Let $F^k\Hom(\Z^n,U(m))$ denote the filtration used in the proof of Corollary \ref{cor:splittingq}. The induced filtration of the subspace $S_n(U(m))$ satisfies
\[
F^kS_n(U(m))=\bigcup_{\lambda\in f^{-1}(\N_{\leq k})\cap (\mathcal{P}\backslash \mathcal{S})} F^\lambda \Hom(\Z^n,U(m))\, .
\]
An argument analogous to that of Lemma \ref{lem:cwstructure} shows that the inclusions $F^{k-1}S_n(U(m))\to F^kS_n(U(m))$ are inclusions of subcomplexes in a CW-structure. Thus, there is a cofibre sequence of the form
\[
F^{k-1}S_n(U(m))_+ \to F^kS_n(U(m))_+\to \bigvee_{\lambda\in f^{-1}(k)\cap (\mathcal{P}\backslash \mathcal{S})}  \Hom(\Z^n,U(m))_{\lambda}^+\, .
\]

The maps  $F^kS_n(U(m))\to F^k\Hom(\Z^n,U(m))$  are likewise inclusions of subcomplexes. Therefore, the space $\Hom(\Z^n,U(m))/S_n(U(m))$ is filtered by the subspaces
\[
\bar{F}^k:=F^k\Hom(\Z^n,U(m))/F^kS_n(U(m))
\]
which are in an induced cofibre sequence
\[
\bar{F}^{k-1} \to \bar{F}^k \to \bigvee_{\lambda\in f^{-1}(k) \cap \mathcal{S}}  \Hom(\Z^n,U(m))_{\lambda}^+\, .
\]

The proof is finished by showing that, after inverting $m!$, this cofibre sequence splits for every $k\leq mn$. This is done in the same fashion as in the proof of Corollary \ref{cor:splittingq}.
\end{proof}

\subsection{Proof of Theorem \ref{thm:splitting1}} \label{sec:proof}  We begin by constructing a suitable approximation of the stable summands $U(m)_+\wedge_{U(\lambda)} \bigwedge_{a\in I} C_{|a|}(\mathfrak{u}_{\lambda_a})^+$.

For the unitary group $U(k)$ we choose a maximal torus $T$ and denote by $\mathfrak{t}\subseteq \mathfrak{u}_k$ its Lie algebra. Then $\mathfrak{t}$ is naturally a $\Sigma_k$-representation and we can form the vector bundle $U(k)/T\times_{\Sigma_k} \mathfrak{t}^n$ associated with the representation $\mathfrak{t}^n=\mathfrak{t}\oplus \cdots \oplus \mathfrak{t}$. As we show in Lemma \ref{lem:actionmapliealgebra}, the adjoint representation can be used to define a map from the Thom space
\[
\phi^+ \co (U(k)/T)_+\wedge_{\Sigma_k} (\mathfrak{t}^+)^{\wedge n}\to C_n(\mathfrak{u}_k)^+
\]
which is a homology isomorphism with coefficients in $\Z[1/p\mid p\leq k]\subseteq \Q$.

Now fix $\lambda\in \mathcal{P}$. To keep track of the rank we write $T(\lambda_a)$ for the maximal torus of $U(\lambda_a)$ and $\mathfrak{t}_{\lambda_a}$ for its Lie algebra. For every $a\in I$ we define a based $U(\lambda_a)$-space $\mathfrak{X}_a$ by
\[
\mathfrak{X}_a=\begin{cases} S^0 & \textnormal{if } |a|=0,\\ \mathfrak{u}_{\lambda_a}^+ & \textnormal{if } |a|=1, \\  (U(\lambda_a)/T(\lambda_a))_+\wedge_{\Sigma_{\lambda_a}} (\mathfrak{t}_{\lambda_a}^+)^{\wedge |a|} & \textnormal{if } |a|>1. \end{cases}
\]
The $U(\lambda_a)$-action is the adjoint action when $|a|=1$ and the obvious action on $U(\lambda_a)/T(\lambda_a)$ when $|a|>1$.
The construction above supplies us with based $U(\lambda_a)$-equivariant maps
\[
\phi_a^+\co \mathfrak{X}_a\to C_{|a|}(\mathfrak{u}_{\lambda_a})^+
\]
for all $a\in I$ when $|a|>1$. When $|a|=0,1$ we let $\phi_a^+$ be the identity map. 

Let $Q(\lambda)$ be the set of primes $p$ such that $p\leq \max\{\lambda_a\mid a\in I,\,  |a|> 1\}$.
\begin{lemma} \label{lem:approximation}
After inverting all primes $p\in Q(\lambda)$, the map
\[
id \wedge \bigwedge_{a\in I} \phi_a^+\co U(m)_+\wedge_{U(\lambda)} \bigwedge_{a\in I} \mathfrak{X}_a \to U(m)_+\wedge_{U(\lambda)} \bigwedge_{a\in I} C_{|a|}(\mathfrak{u}_{\lambda_a})^+
\]
is a stable homotopy equivalence.
\end{lemma}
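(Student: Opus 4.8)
The plan is to build the equivalence out of the single-factor statement recalled just before the lemma, namely that $\phi^+\co (U(k)/T)_+\wedge_{\Sigma_k}(\mathfrak{t}^+)^{\wedge n}\to C_n(\mathfrak{u}_k)^+$ is a $\Z[1/p\mid p\le k]$-homology isomorphism (Lemma \ref{lem:actionmapliealgebra}, invoked in the excerpt). First I would observe that each $\phi_a^+\co \mathfrak{X}_a\to C_{|a|}(\mathfrak{u}_{\lambda_a})^+$ is, after inverting the primes $p\le\lambda_a$, a homology isomorphism of $U(\lambda_a)$-spaces: for $|a|>1$ this is exactly $\phi^+$ applied with $k=\lambda_a$ and $n=|a|$, and for $|a|\le 1$ it is the identity. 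Since $Q(\lambda)$ contains every prime $p\le\lambda_a$ whenever $|a|>1$, after inverting the primes in $Q(\lambda)$ all of the maps $\phi_a^+$ are simultaneously homology isomorphisms.

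The next step is to smash these together over $I$ and then form the Borel construction $U(m)_+\wedge_{U(\lambda)}(-)$, and to check that each of these two operations preserves $\Z[1/Q(\lambda)]$-homology isomorphisms. For the smash product I would use a Künneth argument: $\bigwedge_{a\in I}\phi_a^+$ is a map of finite CW-complexes (Corollary \ref{cor:cwstructure} and the fact that $\mathfrak{X}_a$, being either $S^0$, a Thom space of a vector bundle, or a finite-dimensional linear $U(\lambda_a)$-sphere-quotient, is a finite complex), and over the ground ring $R=\Z[1/Q(\lambda)]$ each $\tilde H_*(\mathfrak{X}_a;R)$ and $\tilde H_*(C_{|a|}(\mathfrak{u}_{\lambda_a})^+;R)$ is a finitely generated $R$-module; one can first reduce to the case where these modules are free by a universal-coefficients/localisation argument (away from $Q(\lambda)$ any torsion that survives is prime to the inverted primes, but in fact here the relevant homology is free, cf. the appendix), and then the reduced Künneth theorem gives that a smash of reduced-homology isomorphisms is a reduced-homology isomorphism. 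For the Borel construction I would run the Serre spectral sequence of the bundle $\bigwedge_{a\in I}\mathfrak{X}_a\to U(m)_+\wedge_{U(\lambda)}\bigwedge_{a\in I}\mathfrak{X}_a\to (U(m)/U(\lambda))_+$ (the flag manifold base is simply connected, so there is no local-coefficient subtlety), compare it with the corresponding spectral sequence for the target via the map $id\wedge\bigwedge_a\phi_a^+$, and conclude from an isomorphism on $E^2$-pages — which holds because the map is an $R$-homology iso on fibres and the identity on the base — that it is an $R$-homology iso on total spaces. Finally, since both source and target are simply connected (the flag manifold is simply connected and the fibres are at least connected; one checks $\pi_1$ vanishes, or passes to the suspension spectrum where only homology matters), a $\Z[1/Q(\lambda)]$-homology isomorphism between them is a stable homotopy equivalence after inverting $Q(\lambda)$, by the (localised) Whitehead theorem for spectra.

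The main obstacle, and the step deserving the most care, is the Künneth step: one must be sure that after inverting the primes in $Q(\lambda)$ there is genuinely no $\mathrm{Tor}$ obstruction to the smash of homology isomorphisms being a homology isomorphism. This is where the hypothesis on $Q(\lambda)$ is used in an essential way — the primes $\le\lambda_a$ for the non-abelian factors $|a|>1$ are precisely the ones that can introduce torsion into $\tilde H_*(C_{|a|}(\mathfrak{u}_{\lambda_a})^+)$ and into the Thom spaces built from the Weyl-group quotients, and inverting them makes the relevant homology groups free (and finitely generated) over $R$, so that $\mathrm{Tor}^R$ vanishes and Künneth is clean. The abelian factors $|a|\le 1$ contribute $S^0$ or $\mathfrak{u}_{\lambda_a}^+$, whose reduced homology is already free over $\Z$, so they cause no trouble. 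Once freeness over $R$ is in hand, the remaining spectral-sequence comparison and the Whitehead argument are routine.
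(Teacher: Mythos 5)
Your proposal follows essentially the same route as the paper's proof: reduce to showing an $R$-homology isomorphism (where $R=\Z[1/Q(\lambda)]$) via Whitehead's theorem and the CW structures of Corollary \ref{cor:cwstructure}, then apply Lemma \ref{lem:actionmapliealgebra} factor-by-factor together with the K\"unneth theorem for the smash over $I$ and a Serre spectral sequence comparison over $U(m)/U(\lambda)$. The paper's proof differs only cosmetically, running the spectral sequence for the unreduced construction $U(m)\times_{U(\lambda)}(-)$ and then collapsing the section at infinity with a five-lemma argument rather than working directly with the reduced Thom-space quotient, and it dispenses with your extra freeness discussion in the K\"unneth step (which is not needed, since quasi-isomorphisms of free chain complexes are preserved under tensor product).
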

\begin{proof}
Let $R$ denote the subring $\Z[1/p\mid p\in Q(\lambda)]\subseteq \Q$. The space $U(m)_+\wedge_{U(\lambda)} \bigwedge_{a\in I} C_{|a|}(\mathfrak{u}_{\lambda_a})^+$ admits a CW-structure by Corollary \ref{cor:cwstructure}. By Whitehead's theorem it therefore suffices to show that $id \wedge \bigwedge_{a\in I} \phi_a^+$ is a homology isomorphism with coefficients in $R$.

By Lemma \ref{lem:actionmapliealgebra} every $\phi_a^+$, $a\in I$ induces an isomorphism on homology with $R$-coefficients. Then $\bigwedge_{a\in I} \phi_a^+$ is a homology isomorphism by the K{\"u}nneth theorem, and
\[
id\times \bigwedge_{a\in I} \phi_a^+\co U(m) \times_{U(\lambda)} \bigwedge_{a\in I} \mathfrak{X}_a \to U(m)\times_{U(\lambda)} \bigwedge_{a\in I} C_{|a|}(\mathfrak{u}_{\lambda_a})^+
\]
is a homology isomorphism by comparison of Serre spectral sequences. Both of these bundles have canonical sections at infinity. Collapsing these and using a five lemma argument we see that $id \wedge \bigwedge_{a\in I} \phi_a^+$ is a homology isomorphism with $R$-coefficients as well.
\end{proof}

The next step will be the passage from the Lie algebra to the Lie group level.

For the maximal torus $T=T(k)$ of $U(k)$ let $F^{j}(T)\subseteq T$ be the $j$-th stage of the fat wedge filtration. Then
\[
T\backslash F^{k-1}(T)=\{(t_1,\dots,t_k)\in T \mid t_i\neq 1 \textnormal{ for all }1\leq i \leq k\}\, .
\]

The Cayley transform of Definition \ref{def:cayley} restricts to a $\Sigma_k$-equivariant homeomorphism $\mathfrak{t}\cong T\backslash F^{k-1}(T)$, and this extends to a $\Sigma_k$-equivariant homeomorphism
\[
\mathfrak{t}^+\cong T/F^{k-1}(T)\,.
\]
This identification yields a $\Sigma_k$-equivariant quotient map $c_T \co T_+\to \mathfrak{t}^+$. Its $n$-th smash power is the $\Sigma_k$-equivariant map
\[
c_T^{\wedge n} \co T^n_+\cong (T_+)^{\wedge n} \to (\mathfrak{t}^+)^{\wedge n}
\]
that collapses the subspace $F^{nk-1}(T^n)\subseteq T^n$.

The fat wedge filtration of $T^n\cong (S^1)^{nk}$ splits $\Sigma_{nk}$-equivariantly by \cite[Proposition 5.2]{CHM}. In particular, there is a $\Sigma_{k}$-equivariant stable section up to homotopy of $c_T^{\wedge n}$. More precisely, let $V=\R^{nk}$ be the standard permutation representation of $\Sigma_{nk}$, and write $V$ also for its restriction to a $\Sigma_k$-representation along the diagonal inclusion $\Sigma_k\leq \Sigma_{nk}$. Then there is a $\Sigma_k$-equivariant map
\begin{equation} \label{eq:torustopcell}
s_{T^n}\co V^+\wedge (\mathfrak{t}^+)^{\wedge n} \to V^+\wedge T^n_+
\end{equation}
and a $\Sigma_k$-equivariant homotopy $(id\wedge c_{T}^{\wedge n})s_{T^n}\simeq id$.

For every $a\in I$ we define a based $U(\lambda_a)$-space $X_a$ by
\[
X_a=\begin{cases} S^0 & \textnormal{if } |a|=0,\\ U(\lambda_a)_+ & \textnormal{if } |a|=1, \\  (U(\lambda_a)/T(\lambda_a))_+\wedge_{\Sigma_{\lambda_a}} (T(\lambda_a)_+)^{\wedge |a|} & \textnormal{if } |a|>1. \end{cases}
\]
The $U(\lambda_a)$-action in the case $|a|=1$ is by conjugation. There are based $U(\lambda_a)$-equivariant maps, for all $a\in I$
\[
\pi_a\co X_a\to \mathfrak{X}_a
\]
defined as follows: When $|a|>1$ we let $\pi_a=id \wedge c_T^{\wedge |a|}$, and when $|a|=0,1$ we let $\pi_a$ be the identity map, respectively the projection $U(\lambda_a)_+\to \mathfrak{u}_{\lambda_a}^+$ onto the top filtration quotient of $U(\lambda_a)$ described in (\ref{eq:millertopstratum}).

\begin{lemma}\label{lem:presection}
The map
\[
id\wedge \bigwedge_{a\in I} \pi_a\co U(m)_+\wedge_{U(\lambda)} \bigwedge_{a\in I} X_a \to U(m)_+\wedge_{U(\lambda)} \bigwedge_{a\in I} \mathfrak{X}_a
\]
has a stable section up to homotopy.
\end{lemma}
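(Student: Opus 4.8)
The plan is to construct a stable section of $id\wedge\bigwedge_{a\in I}\pi_a$ one smash factor at a time, using the sections that are already available on each individual factor $X_a$ and then combining them equivariantly. For each $a\in I$ we need a stable section up to homotopy of $\pi_a\co X_a\to\mathfrak{X}_a$ as a based $U(\lambda_a)$-space, which amounts to: (i) for $|a|=0$ the identity; (ii) for $|a|=1$, a $U(\lambda_a)$-equivariant stable section of the projection $U(\lambda_a)_+\to\mathfrak{u}_{\lambda_a}^+$ onto the top Miller stratum — this is precisely the $U(\lambda_a)$-equivariant Pontryagin--Thom section recalled around~(\ref{eq:millertopstratum}), using the tubular neighbourhood $GL_{\lambda_a}(\C)$ of $U(\lambda_a)$ inside $\textnormal{Mat}_{\lambda_a}(\C)$; (iii) for $|a|>1$, a $U(\lambda_a)$-equivariant stable section of $id\wedge c_T^{\wedge|a|}\co(U(\lambda_a)/T(\lambda_a))_+\wedge_{\Sigma_{\lambda_a}}(T(\lambda_a)_+)^{\wedge|a|}\to(U(\lambda_a)/T(\lambda_a))_+\wedge_{\Sigma_{\lambda_a}}(\mathfrak{t}_{\lambda_a}^+)^{\wedge|a|}$, obtained by applying $(U(\lambda_a)/T(\lambda_a))_+\wedge_{\Sigma_{\lambda_a}}(-)$ to the $\Sigma_{\lambda_a}$-equivariant section $s_{T^{|a|}}$ of~(\ref{eq:torustopcell}). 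The point in case (iii) is that since the section $s_{T^{|a|}}$ is $\Sigma_{\lambda_a}$-equivariant (after smashing with the sphere $V^+$ for a suitable permutation representation $V$), it descends through the Borel construction $U(\lambda_a)\times_{\Sigma_{\lambda_a}}(-)$ to a fibrewise stable section over $U(\lambda_a)/T(\lambda_a)$.

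Next I would smash these sections together. Since each $\pi_a$ has a $U(\lambda_a)$-equivariant stable section (possibly after smashing with a representation sphere to absorb the suspensions coming from~(\ref{eq:torustopcell})), their smash product gives a $U(\lambda)=\prod_{a\in I}U(\lambda_a)$-equivariant stable section of $\bigwedge_{a\in I}\pi_a\co\bigwedge_{a\in I}X_a\to\bigwedge_{a\in I}\mathfrak{X}_a$. Here one has to be careful that the $U(\lambda)$-action on the smash product is the product of the individual $U(\lambda_a)$-actions, which it is by construction, so the equivariant smash of equivariant maps is again equivariant. Then I would apply the functor $U(m)_+\wedge_{U(\lambda)}(-)$, i.e. the Borel construction over the homogeneous space $U(m)/U(\lambda)$, to transport the $U(\lambda)$-equivariant section to a fibrewise stable section of $id\wedge\bigwedge_{a\in I}\pi_a$ over $U(m)/U(\lambda)$. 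Because $U(m)/U(\lambda)$ is a compact manifold (a partial flag manifold) with a finite CW-structure, the fibrewise stable section assembles to a genuine stable section up to homotopy of the total map, by the usual obstruction-theoretic / fibrewise-to-total argument (one may cite the fibrewise Pontryagin--Thom formalism used for the corresponding step in Miller's splitting, as in \cite{Crabb2}).

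The main obstacle I expect is the bookkeeping of suspension coordinates: the section $s_{T^n}$ in~(\ref{eq:torustopcell}) is only defined after smashing with the representation sphere $V^+$ where $V=\R^{nk}$, and these auxiliary spheres differ from factor to factor and carry genuine $\Sigma_{\lambda_a}$- (hence, after inducing up, $U(\lambda_a)$-) actions rather than being trivial suspensions. So the ``section'' of each $\pi_a$ lives not on the nose but as a map $V_a^+\wedge\mathfrak{X}_a\to V_a^+\wedge X_a$. To smash these together and then push through $U(m)_+\wedge_{U(\lambda)}(-)$ one must work in the category of spectra parametrised over $U(m)/U(\lambda)$ (or with $U(m)$-equivariant spectra), keeping track of the bundle $U(m)\times_{U(\lambda)}\bigoplus_a V_a\to U(m)/U(\lambda)$ that is being suspended against; at the end one desuspends, which is legitimate stably. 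A secondary, more technical point is verifying that $\bigwedge_a\pi_a$ really is $U(\lambda)$-equivariant with the diagonal action and that all the homotopies can be chosen equivariantly — this follows because each ingredient (the Cayley transform, the Pontryagin--Thom collapse, and the splitting of~\cite[Proposition 5.2]{CHM}) is equivariant, but it should be stated carefully. Once these coordinate issues are handled, the argument is a formal application of the fibrewise splitting machinery already invoked for Miller's theorem.
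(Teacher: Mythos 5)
Your proposal is correct and follows essentially the same route as the paper: reduce to $U(\lambda_a)$-equivariant stable sections of the individual $\pi_a$, supplied by the Pontryagin--Thom collapse when $|a|=1$ and by the $\Sigma_{\lambda_a}$-equivariant splitting~(\ref{eq:torustopcell}) when $|a|>1$, then transport fibrewise over $U(m)/U(\lambda)$. The one place you gesture where the paper is precise is the ``fibrewise-to-total'' step: the auxiliary representation sphere $V^+$ becomes a nontrivial Thom space after the Borel construction, and the paper isolates the needed fact as its Lemma~\ref{lem:stablesection}, proved by complementing the $G$-bundle $G\times_H V$ to a trivial one over the compact base $G/H$, applied once with $(G,H)=(U(m),U(\lambda))$ and again with $(G,H)=(U(\lambda_a),N(T(\lambda_a)))$; you would need to state and prove such a lemma (or cite it) to make your ``desuspend at the end'' step rigorous, but the idea is the same.
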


For the proof we require the following lemma.

\begin{lemma} \label{lem:stablesection}
Let $G$ be a compact Lie group and $H\subseteq G$ a closed subgroup. Let $X,Y$ be based $H$-spaces and $f\co X\to Y$ a based $H$-equivariant map. If $f$ has an $H$-equivariant stable section up to homotopy, then $id\wedge f\co G_+\wedge_{H}X\to G_+\wedge_H Y$ has a $G$-equivariant stable section up to homotopy.
\end{lemma}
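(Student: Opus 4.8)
The plan is to reduce everything to the classical fact that induction $G_+\wedge_H(-)$ carries an $H$-equivariant stable homotopy into a $G$-equivariant one, working in the genuine equivariant stable homotopy category (or, more elementarily, with stable maps represented by $G$-maps after smashing with a suitable $G$-representation sphere). First I would unwind the hypothesis: an $H$-equivariant stable section up to homotopy of $f\colon X\to Y$ means there is a finite-dimensional $H$-representation $W$, an $H$-map $s\colon W^+\wedge Y\to W^+\wedge X$, and an $H$-equivariant homotopy $(\mathrm{id}_{W^+}\wedge f)\circ s\simeq \mathrm{id}_{W^+\wedge Y}$. The key observation is that induction commutes with smashing by a representation sphere in the sense that there is a natural $G$-homeomorphism
\[
G_+\wedge_H(W^+\wedge Z)\cong (G\times_H W)^+\wedge_{?}\, \cdots
\]
— more precisely, $G_+\wedge_H(W^+\wedge Z)\cong \mathrm{Th}(G\times_H W)\wedge_? $; this is slightly awkward because $G\times_H W$ is a vector bundle over $G/H$, not a trivial bundle. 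The clean way around this is to first choose a $G$-representation $U$ such that $U$ restricted to $H$ contains $W$ as a subrepresentation, say $U|_H\cong W\oplus W'$. Then smashing the given $H$-data with $(W')^+$ produces an $H$-equivariant stable section defined on $U|_H$-sphere level, and now $G_+\wedge_H(U^+\wedge Z)\cong U^+\wedge(G_+\wedge_H Z)$ $G$-equivariantly and naturally in the based $H$-space $Z$, since $U$ is pulled back from $G$.

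With that normalization in hand, the argument is short. Replace $W$ by $U|_H$ (smashing $s$ and the homotopy with $(W')^+$), so we have an $H$-map $s\colon U^+\wedge Y\to U^+\wedge X$ with $(\mathrm{id}\wedge f)\circ s\simeq \mathrm{id}$ $H$-equivariantly. Apply the functor $G_+\wedge_H(-)$, which preserves $H$-equivariant homotopies (it sends an $H$-homotopy $Z\wedge I_+\to Z'$ to the $G$-homotopy $(G_+\wedge_H Z)\wedge I_+\cong G_+\wedge_H(Z\wedge I_+)\to G_+\wedge_H Z'$). Using the natural $G$-homeomorphism $G_+\wedge_H(U^+\wedge Z)\cong U^+\wedge(G_+\wedge_H Z)$, the induced map $G_+\wedge_H s$ becomes a $G$-map
\[
U^+\wedge(G_+\wedge_H Y)\longrightarrow U^+\wedge(G_+\wedge_H X),
\]
and naturality of this homeomorphism in $Z$ identifies $G_+\wedge_H(\mathrm{id}_{U^+}\wedge f)$ with $\mathrm{id}_{U^+}\wedge(\mathrm{id}\wedge f)$, where now $\mathrm{id}\wedge f$ denotes $\mathrm{id}\wedge f\colon G_+\wedge_H X\to G_+\wedge_H Y$. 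Hence the composite is $G$-equivariantly homotopic to the identity of $U^+\wedge(G_+\wedge_H Y)$, i.e. $G_+\wedge_H s$ is the desired $G$-equivariant stable section up to homotopy of $\mathrm{id}\wedge f$.

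The only real point requiring care — the step I expect to be the main (if modest) obstacle — is establishing the natural $G$-homeomorphism $G_+\wedge_H(U^+\wedge Z)\cong U^+\wedge(G_+\wedge_H Z)$ for a $G$-representation $U$, and, upstream of it, the reduction that lets one replace an arbitrary $H$-representation $W$ by the restriction of a $G$-representation. The first is standard: since $U$ carries a $G$-action, the map $(g,u,z)\mapsto (g, g^{-1}\!\cdot\! u, z)$ descends to a well-defined continuous bijection $G\times_H(U\times Z)\to U\times(G\times_H Z)$ with continuous inverse, compatible with the one-point compactifications and the collapse of the subspaces used to form the smash products, and it is $G$-equivariant for the left translation action on both sides; naturality in $Z$ is immediate from the formula. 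The second reduction uses that $H$ is a closed subgroup of the compact Lie group $G$, so any finite-dimensional $H$-representation $W$ embeds in the restriction of some finite-dimensional $G$-representation $U$ (embed $W$ in a trivial $H$-representation and induce, or invoke that $W$ is a summand of $\mathrm{Res}^G_H\mathrm{Ind}_H^G W$ after stabilizing); smashing the given section data with $(W')^+$, where $U|_H\cong W\oplus W'$, is harmless because smashing with a sphere preserves both equivariant maps and equivariant homotopies. Everything else is formal manipulation of these natural transformations, and once they are in place the proof is two lines.
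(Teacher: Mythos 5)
Your proposal is correct in its structural idea and amounts to a clean reorganization of the paper's proof rather than a fundamentally new one. The paper works directly with the Thom space of the complementary $G$-bundle $p^\ast F\to G$: it produces a $(G\times H)$-equivariant homeomorphism $G_+\wedge W^+\cong Z\wedge V^+$ (where $Z=\mathrm{Th}(p^\ast F)$, $V$ is the stabilizing $H$-representation and $W$ the $G$-representation trivializing $F\oplus(G\times_H V)$) and then passes to the $H$-quotient. You instead normalize the stabilizing representation at the outset, replacing an arbitrary $H$-representation by a restriction $U|_H$ of a $G$-representation, after which the projection formula $G_+\wedge_H(U^+\wedge Z)\cong U^+\wedge(G_+\wedge_H Z)$ finishes the argument formally. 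That is arguably tidier, since the auxiliary Thom space $Z$ never has to appear. But the two proofs turn on exactly the same geometric input. The fact you need --- that any finite-dimensional $H$-representation embeds $H$-equivariantly in $U|_H$ for some finite-dimensional $G$-representation $U$ --- is proved by taking a complementary $G$-vector bundle $F$ with $F\oplus(G\times_H W)\cong G/H\times U$ (possible because $G/H$ is compact) and restricting to the fibre over $eH$. This is precisely the paper's construction of $F$, $W$, and the isomorphism $G\times W\cong p^\ast F\times V$; you have simply quoted the consequence at the level of representations rather than at the level of bundles.

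There is one point that, as written, is wrong and should be fixed: your two parenthetical justifications for the embedding claim do not work. One cannot ``embed $W$ in a trivial $H$-representation'' unless $W$ is itself trivial, and when $\dim G/H>0$ the induced representation $\mathrm{Ind}_H^G W$ is infinite-dimensional, so the Frobenius-reciprocity argument borrowed from finite groups does not apply, even ``after stabilizing'' in any finite-dimensional sense. You should replace these with the bundle-complement argument sketched above (which is the paper's own argument), or appeal to the Mostow embedding theorem / Peter--Weyl; with that correction the proof you outline is complete.
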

\begin{proof}
By assumption there is a finite dimensional $H$-representation $V$, an $H$-equivariant map
\[
s\co V^+\wedge Y\to V^+\wedge X
\]
and an $H$-equivariant homotopy $(id\wedge f) s\simeq id$.

Consider the $G$-vector bundle
\[
E=G\times_H V\to G/H\, .
\]
If $p\co G\to G/H$ is the projection, then the pullback
\[
p^\ast E=\{(g_1,[g_2,v])\in G\times E\mid g_1H=g_2H\}
\]
is a $(G\times H)$-vector bundle with action defined by
\[
(g,h)\cdot (g_1,[g_2,v])=(gg_1h^{-1},[gg_1,v])\, .
\]

There is a $(G\times H)$-equivariant bundle isomorphism
\[
p^\ast E\cong G\times V
\]
defined by $(g_1,[g_2,v])\mapsto (g_1,g_1^{-1}g_2 v)$ with inverse $(g,v)\mapsto (g,[g,v])$. The action on $G\times V$ is given by $(g,h)\cdot (g_1,v)=(gg_1h^{-1},hv)$.

Since $G$ is compact, we can find a $G$-vector bundle $F \to G/H$ such that $F\oplus E$ is a trivial $G$-vector bundle, i.e.,
\[
F\oplus E\cong G/H \times W
\]
for some $G$-representation $W$. There is then an isomorphism of $(G\times H)$-vector bundles
\[
G\times W\cong p^\ast(F\oplus E)\cong p^\ast F \times  V\, ,
\]
where $V$ is the trivial $G$-bundle over a point with $H$ acting trivially on it. The $(G\times H)$-action on $G\times W$ is given by $(g,h)\cdot (g_1,w)=(gg_1h^{-1},gw)$.

Taking Thom spaces on both sides gives a $(G\times H)$-equivariant homeomorphism
\[
G_+\wedge W^+ \cong Z\wedge V^+\,,
\]
where $Z$ is the Thom space of $p^\ast F$.

Now we consider the composite
\[
W^+\wedge G_+\wedge Y \cong Z\wedge V^+\wedge Y\xrightarrow{id\wedge s} Z\wedge V^+\wedge X \cong W^+\wedge G_+\wedge X\, .
\]
By construction this is $(G\times H)$-equivariant and the composite with $id\wedge f$ is $(G\times H)$-equivarianty homotopic to the identity. By passing to the $H$-quotient we obtain the desired $G$-equivariant stable section up to homotopy.
\end{proof}

\begin{proof}[Proof of Lemma \ref{lem:presection}]
By Lemma \ref{lem:stablesection} it suffices to show that for every $a\in I$ the map $\pi_a\co X_a\to \mathfrak{X}_a$ has a $U(\lambda_a)$-equivariant stable section $\sigma_a\co \mathfrak{X}_a\to X_a$ up to homotopy.

For $|a|=0$ this is trivial, and for $|a|=1$ a stable section is provided by a Pontryagin-Thom collapse (see Section \ref{sec:Miller}). Now assume $|a|>1$. We can view $\pi_a$ as the map
\[
id\wedge c_T^{\wedge |a|}\co U(\lambda_a)_+\wedge_{N(T(\lambda_a))} (T(\lambda_a)_+)^{\wedge |a|}\to U(\lambda_a)_+\wedge_{N(T(\lambda_a))} (\mathfrak{t}_{\lambda_a}^+)^{\wedge |a|}
\]
where $N(T(\lambda_a))$ is the normaliser of $T(\lambda_a)$ in $U(\lambda_a)$. Once again, by Lemma \ref{lem:stablesection}, it suffices to give a $N(T(\lambda_a))$-equivariant stable section up to homotopy of $c_T^{\wedge |a|}$. But such a section is given by (\ref{eq:torustopcell}).
\end{proof}

Finally, we will construct a map
\[
\varphi_\lambda\co U(m)_+\wedge_{U(\lambda)} \bigwedge_{a\in I} X_a \to F^\lambda\Hom(\Z^n,U(m))_+\, .
\]
First we define, for every $a\in I$, a based map 
\[
\varphi_a^+\co X_a\to \Hom(\Z^{|a|},U(\lambda_a))_+\, .
\]
For $|a|>1$ we let $\varphi_a^+$ be the action map defined in (\ref{eq:actionmapliegroup}), extended to a based map over the disjoint basepoint. For $|a|=0,1$ we choose $\varphi_a^+$ to be the identity map.

Let $\beta_\lambda$ be the composite
\[
\bigwedge_{a\in I} X_a \xrightarrow{\bigwedge_{a\in I} \textnormal{sh}_a^+ \varphi_{a}^+} \bigwedge_{a\in I} \Hom(\Z^n,U(\lambda_a))_+ \xrightarrow{\oplus} \Hom(\Z^n,U(m))_+
\]
where $\textnormal{sh}_a$ is the map defined in (\ref{eq:shufflemap}) and the last map takes the block sum of matrices. By construction, the image of $\beta_\lambda$ is in the subspace $F^\lambda \Hom(\Z^n,U(m))_+$. We then define $\varphi_\lambda$ as the composite
\[
U(m)_+\wedge_{U(\lambda)} \bigwedge_{a\in I} X_a \xrightarrow{id\wedge \beta_\lambda} U(m)_+\wedge_{U(\lambda)} F^\lambda\Hom(\Z^n,U(m))_+ \to F^\lambda \Hom(\Z^n,U(m))_+\,,
\]
in which the second map is conjugation. This is well-defined, because $F^\lambda \Hom(\Z^n,U(m))_+$ is invariant under the conjugation action by $U(m)$.

To finish the proof of Theorem \ref{thm:splitting1} we consider the diagram
\[
\xymatrix{
U(m)_+\wedge_{U(\lambda)} \bigwedge_{a\in I} X_a  \ar[r]^-{id\wedge \bigwedge_{a\in I} \pi_a} \ar[d]^-{\varphi_\lambda} & U(m)_+\wedge_{U(\lambda)} \bigwedge_{a\in I} \mathfrak{X}_a \ar[d]_-{\simeq}^-{id\wedge \bigwedge_{a\in I} \phi_a^+} \\
F^\lambda\Hom(\Z^n,U(m))_+ \ar[r]^-{\zeta_\lambda} & U(m)_+\wedge_{U(\lambda)} \bigwedge_{a\in I} C_{|a|}(\mathfrak{u}_{\lambda_a})^+\, .
}
\]
A simple diagram chase shows that the diagram commutes. Let $\alpha$ be a stable homotopy inverse of the right hand vertical map, and let $\tilde{\sigma}_{\lambda}$ be a stable section, up to homotopy, of the top horizontal map. Then the composite $\varphi_\lambda\tilde{\sigma}_\lambda\alpha$ is a stable section, up to homotopy, of $\zeta_\lambda$.

\subsection{Symplectic and orthogonal groups} \label{sec:symplectic} Stable decompositions for spaces of commuting elements in the symplectic and orthogonal groups can be obtained with a proof that is almost identical with the one in the unitary case.

The Weyl group of $Sp(m)$ and of $SO(2m+1)$ is the signed symmetric group, i.e., the wreath product $\Z/2\wr \Sigma_m$. It acts on a maximal torus $T\cong (S^1)^m$ by permutation and inversion of the factors. The Weyl group of $SO(2m)$ is the subgroup of $\Z/2\wr \Sigma_m$ consisting of those signed permutations with an even number of minus signs.

Now let $W$ denote any of these Weyl groups. As before, we denote by $\mathcal{P}$ the poset of partitions of $m$ indexed by $I=\{0,1\}^n$. The $\mathcal{P}$-indexed filtration of $T^n$ constructed in Section \ref{sec:construction} and represented by the map $\Psi_T\co T^n\to \mathcal{P}$ is invariant under the $W$-action and thus it induces a map $\bar{\Psi}_T\co T^n/W\to \mathcal{P}$.

Let us focus first on the symplectic groups. Every abelian subgroup of $Sp(m)$ is contained in a maximal torus. This implies that $\Hom(\Z^n,Sp(m))$ is path-connected, and we can identify the space $\Hom(\Z^n,Sp(m))/Sp(m)$ with $T^n/(\Z/2\wr \Sigma_m)$. Just like in the unitary case (see Construction \ref{cons:construction}) we obtain an induced filtration
\[
\Psi_{Sp(m)}\co \Hom(\Z^n,Sp(m))\to \mathcal{P}\, .
\]

The next step is the identification of the strata with bundles of commuting varieties. This works again by use of the Cayley transform. The Cayley transform $X\mapsto (X-\mathds{1})(X+\mathds{1})^{-1}$ is defined on the Lie algebra of $Sp(m)$,
\[
\mathfrak{sp}_m=\{X\in \textnormal{Mat}_m(\mathbb{H})\mid X^\dagger=-X\}\, .
\]
As in the complex case, it is an equivariant diffeomorphism of $\mathfrak{sp}_m$ with the open subspace of $Sp(m)$ consisting of those matrices $g$ such that $g-\mathds{1}$ is invertible. This leads to the obvious analogue of Corollary \ref{cor:subquotients} in which each unitary group is replaced by the corresponding symplectic group.

To show that the filtration splits stably, we proceed as in the unitary case. However, this requires a stable splitting of the map $c_{T}^{\wedge n}\co T^n_+\to (\mathfrak{t}^+)^{\wedge n}$ which is equivariant for the action of the wreath product $\Z/2\wr \Sigma_m$. The stable splitting proved in \cite[Proposition 5.2]{CHM} does have this property, see \cite[Remark 5.3]{CHM}. We arrive at the following theorem:
\begin{theorem} \label{thm:splittingspm}
After inverting $2^m m!$ there is a stable splitting for all $n\in \N$,
\[
\Hom(\Z^n,Sp(m))_+\simeq \bigvee_{\lambda\in \mathcal{P}} \left(Sp(m)_+\wedge_{\prod_{a\in I}Sp(\lambda_a)} \bigwedge_{a\in I} C_{|a|}(\mathfrak{sp}_{\lambda_a})^+\right)\, .
\]
\end{theorem}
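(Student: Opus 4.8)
The plan is to mirror the unitary argument of Sections \ref{sec:filtration}--\ref{sec:proof} step by step, replacing $U$ by $Sp$ and the symmetric group by the Weyl group $W=\Z/2\wr\Sigma_m$, and flagging only the points that must be re-checked. First I would invoke Baird's theorem \cite{Baird} to conclude that the action map $Sp(m)/T\times_W T^n\to\Hom(\Z^n,Sp(m))$ is an isomorphism in homology with $\Z[1/|W|]=\Z[1/2^m m!]$-coefficients, hence a stable equivalence once $2^m m!$ is inverted; this fixes the set of primes that must be inverted. Next I would push the $\mathcal{P}$-indexed, $W$-equivariant filtration $\Psi_T\co T^n\to\mathcal{P}$ of Section \ref{sec:construction} forward along the action map to obtain a filtration $\Psi_{Sp(m)}\co\Hom(\Z^n,Sp(m))\to\mathcal{P}$, and I would check that the proof of Lemma \ref{lem:cwstructure} carries over verbatim: $\Hom(\Z^n,Sp(m))$ is an affine real algebraic variety with semialgebraic $Sp(m)$-action, so the semialgebraic triangulation theorems again furnish compatible ordinary and $Sp(m)$-equivariant CW-structures on the filtration stages.

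The next step is to identify the subquotients. The Cayley transform $X\mapsto(X-\mathds{1})(X+\mathds{1})^{-1}$ on $\mathfrak{sp}_m=\{X\in\textnormal{Mat}_m(\HH)\mid X^\dagger=-X\}$ is an $Sp(m)$-equivariant diffeomorphism onto the open set of $g\in Sp(m)$ with $g-\mathds{1}$ invertible, and it still respects commutativity because the equivalence $[X,Y]=0\Leftrightarrow[\psi(X),\psi(Y)]=\mathds{1}$ uses only the centrality of $\pm\mathds{1}$, which is unchanged over $\HH$. With this in hand, the proofs of Proposition \ref{prop:strata} and Corollary \ref{cor:subquotients} go through and identify the stratum over $\lambda$ with $Sp(m)\times_{\prod_{a\in I}Sp(\lambda_a)}\prod_{a\in I}C_{|a|}(\mathfrak{sp}_{\lambda_a})$ and the subquotient $F^\lambda/F^{<\lambda}$ with $Sp(m)_+\wedge_{\prod_{a\in I}Sp(\lambda_a)}\bigwedge_{a\in I}C_{|a|}(\mathfrak{sp}_{\lambda_a})^+$.

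The heart of the matter is the symplectic analogue of Theorem \ref{thm:splitting1}: for each $\lambda\in\mathcal{P}$ the collapse onto this subquotient should admit a stable section after inverting $2^m m!$. I would prove this by reproducing, with $Sp$ in place of $U$, the commutative square at the end of Section \ref{sec:proof}. Its right vertical map is a $\Z[1/2^m m!]$-homology isomorphism by the symplectic version of Lemma \ref{lem:approximation}, whose input is that the Thom space of the torus bundle maps to $C_{|a|}(\mathfrak{sp}_{\lambda_a})^+$ by a $\Z[1/|W_{Sp(\lambda_a)}|]$-homology isomorphism; this follows from Theorem \ref{thm:main3} together with its refinement in the appendix, and $|W_{Sp(\lambda_a)}|=2^{\lambda_a}\lambda_a!$ divides $2^m m!$. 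Its top map admits a stable section by the symplectic version of Lemma \ref{lem:presection}: via Lemma \ref{lem:stablesection} one reduces to sectioning $Sp(\lambda_a)_+\to\mathfrak{sp}_{\lambda_a}^+$ when $|a|=1$ and $c_T^{\wedge|a|}\co T^{|a|}_+\to(\mathfrak{t}^+)^{\wedge|a|}$ equivariantly for $N_{Sp(\lambda_a)}(T)$ when $|a|>1$. The first I would get from the quaternionic Pontryagin--Thom collapse: the map $(A,Z)\mapsto A\exp(-Z)$ identifies $Sp(m)\times\mathcal{H}$ with the tubular neighbourhood $GL_m(\HH)$ of $Sp(m)$ in $\textnormal{Mat}_m(\HH)\cong\mathfrak{sp}_m\times\mathcal{H}$, where $\mathcal{H}$ denotes the quaternionic Hermitian matrices; collapsing the complement yields a stable map $\mathfrak{sp}_m^+\to Sp(m)_+$ which one checks is a section of the projection onto the top filtration quotient. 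The second I would get from the fat wedge splitting of $T^{|a|}\cong(S^1)^{|a|\lambda_a}$, which by \cite[Proposition 5.2]{CHM} and \cite[Remark 5.3]{CHM} is equivariant for the full wreath product $\Z/2\wr\Sigma_{\lambda_a}$, hence in particular for $N_{Sp(\lambda_a)}(T)$. Composing a section of the top map with a stable homotopy inverse of the right vertical map and with the map $\varphi_\lambda$ into $F^\lambda\Hom(\Z^n,Sp(m))_+$, defined exactly as in Section \ref{sec:proof}, produces the desired section. Finally I would run the induction of Corollary \ref{cor:splittingq} over the coarse filtration $F^k=(f\Psi_{Sp(m)})^{-1}(\N_{\le k})$, splitting each cofibre sequence after inverting $2^m m!$, to obtain the stated decomposition.

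I expect no genuinely new obstacle: the only places that need fresh verification are the quaternionic Pontryagin--Thom collapse --- a routine transcription of Miller's construction to $\textnormal{Mat}_m(\HH)$ --- and the $\Z/2\wr\Sigma_m$-equivariance of the fat wedge splitting of the torus, which is exactly the property of \cite[Proposition 5.2]{CHM} recorded in \cite[Remark 5.3]{CHM}. It is precisely this last equivariance, present for $Sp(m)$ (and $SO(m)$) but unavailable for $SU(m)$, that allows the argument to go through.
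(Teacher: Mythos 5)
Your proposal is correct and matches the paper's argument: the paper proves Theorem \ref{thm:splittingspm} exactly by transporting the unitary argument, using the quaternionic Cayley transform to identify subquotients and the $\Z/2\wr\Sigma_m$-equivariance of the fat wedge splitting of the torus from \cite[Proposition 5.2, Remark 5.3]{CHM} to split the filtration, with $|W|=2^m m!$ dictating the localization via Baird's theorem and Lemma \ref{lem:actionmapliealgebra}. The only minor quibble is that the homology-isomorphism input to the symplectic analogue of Lemma \ref{lem:approximation} is Lemma \ref{lem:actionmapliealgebra} rather than Theorem \ref{thm:main3}, but this is a citation slip, not a mathematical gap.
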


\begin{example} \label{ex:sp1}
Consider $Sp(1)=SU(2)$. In this case $\mathcal{P}$ is in bijection with $I=\{0,1\}^n$, by sending $\lambda=(\lambda_a)_{a\in I}$ to the unique $a$ such that $\lambda_a=1$. The resulting decomposition away from $2$ takes the form
\[
\Hom(\Z^n,SU(2))_+\simeq \bigvee_{0\leq l\leq n} \left(\bigvee^{\binom{n}{l}} C_l(\mathfrak{su}_2)^+\right)\, .
\]
In fact, the Cayley transform $SU(2)\backslash \{\mathds{1}\}\cong \mathfrak{su}_2$ yields an ``accidental'' homeomorphism
\[
C_l(\mathfrak{su}_2)^+\cong \Hom(\Z^l,SU(2))/S_l(SU(2))
\]
which shows that the splitting holds even without inverting $2$, see \cite[Theorem 6.5]{AC}.
\end{example}

Now we consider the orthogonal groups. Let $k\in \{2m,2m+1\}$. When $k\geq 3$ some extra care must be taken, because if in addition $n\geq 2$, then $\Hom(\Z^n,SO(k))$ is no longer path-connected. In particular, the action map (defined as in (\ref{eq:actionmapliegroup}))
\[
\varphi\co SO(k)/T\times_W T^n\to \Hom(\Z^n,SO(k))
\]
maps only onto the path-component of the trivial homomorphism. Let $\Hom(\Z^n,SO(k))_{\mathds{1}}$ denote this path-component. Then $\Hom(\Z^n,SO(k))_{\mathds{1}}/SO(k)\cong T^n/W$ and $\bar{\Psi}_T$ induces a filtration
\[
\Psi_{SO(k)}\co \Hom(\Z^n,SO(k))_{\mathds{1}}\to \mathcal{P}\,.
\]
Observe that $\Hom(\Z^n,O(k))_{\mathds{1}}=\Hom(\Z^n,SO(k))_{\mathds{1}}$, which is why we need not consider the full orthogonal group.

A characterisation of the path-components of $\Hom(\Z^n,SO(k))$ was obtained in \cite{Rojo}. In terms of representation theory, a representation $\Z^n\to SO(k)$ belongs to the path-component $\Hom(\Z^n,SO(k))_{\mathds{1}}$ if and only if it is isomorphic to a direct sum of $m$ representations $\Z^n\to SO(2)$, and an additional one-dimensional trivial representation if $k=2m+1$.

Using the real Cayley transform $\psi_{2m}\co \mathfrak{so}_{2m}\to SO(2m)$, defined by the usual formula $X\mapsto (X-\mathds{1})(X+\mathds{1})^{-1}$, we arrive at the following analogue of Corollary \ref{cor:subquotients}:  There is an equivariant homeomorphism
\[
\Psi_{SO(k)}^{-1}(\lambda)^+\cong (O(k)/O(2\lambda_0+\varepsilon))_+\wedge_{\prod_{0\neq a\in I}O(2\lambda_a)} \bigwedge_{a\in I} C_{|a|}(\mathfrak{so}_{2\lambda_a})^+\,,
\]
where $\varepsilon=0$ if $k$ is even and $\varepsilon=1$ if $k$ is odd.

Proceeding as before we obtain:

\begin{theorem} \label{thm:splittingsom}
Let $k\in \{2m,2m+1\}$ and let $\varepsilon=0$ if $k$ is even and $\varepsilon=1$ if $k$ is odd. After inverting $2^{m-1+\varepsilon} m!$ there is a stable splitting for all $n\in \N$,
\[
\Hom(\Z^n,SO(k))^+_{\mathds{1}}\simeq \bigvee_{\lambda\in \mathcal{P}}\left((O(k)/O(2\lambda_0+\varepsilon))_+\wedge_{\prod_{0\neq a\in I}O(2\lambda_a)} \bigwedge_{a\in I} C_{|a|}(\mathfrak{so}_{2\lambda_a})^+\right)\, .
\]
\end{theorem}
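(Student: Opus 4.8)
The plan is to transcribe the unitary argument of Sections~\ref{sec:filtration}--\ref{sec:splitting} almost verbatim, substituting the orthogonal data that the discussion above has already set up; the work is mostly in putting those ingredients in place. Write $W$ for the Weyl group of $SO(k)$, so $|W|=2^{m-1+\varepsilon}m!$. Since $SO(k)$ has abelian subgroups not contained in a maximal torus, the action map $\varphi\co SO(k)/T\times_W T^n\to\Hom(\Z^n,SO(k))$ only covers the component $\Hom(\Z^n,SO(k))_{\mathds{1}}$ of the trivial homomorphism (characterised in \cite{Rojo} and recalled above), so one works throughout with that component, equipped with its $O(k)$-conjugation action. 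The three inputs are: (i) Baird's theorem \cite{Baird}, giving that $\varphi$ is a $\Z[1/|W|]$-homology isomorphism onto $\Hom(\Z^n,SO(k))_{\mathds{1}}$, hence after inverting $2^{m-1+\varepsilon}m!$ a stable equivalence $\Hom(\Z^n,SO(k))^+_{\mathds{1}}\simeq(SO(k)/T\times_W T^n)_+$; (ii) the $\mathcal{P}$-indexed filtration of $T^n$ of Section~\ref{sec:construction}, which is invariant under the signed symmetric group and whose associated stable splitting of \cite[Proposition 5.2]{CHM} is equivariant for sign involutions of the circle factors by \cite[Remark 5.3]{CHM}, hence $W$-equivariant (restricting to the index-two subgroup $W\le\Z/2\wr\Sigma_m$ when $k$ is even); and (iii) the real Cayley transform $\psi_{2\lambda}\co\mathfrak{so}_{2\lambda}\to SO(2\lambda)$ together with the stratum identification stated just above, which is the orthogonal analogue of Corollary~\ref{cor:subquotients}.

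First I would prove the per-stratum splitting, the orthogonal analogue of Theorem~\ref{thm:splitting1}: after inverting $2^{m-1+\varepsilon}m!$, for every $\lambda\in\mathcal{P}$ the collapse map
\[
\zeta_\lambda\co F^\lambda\Hom(\Z^n,SO(k))^+_{\mathds{1}}\to (O(k)/O(2\lambda_0+\varepsilon))_+\wedge_{\prod_{0\neq a\in I}O(2\lambda_a)}\bigwedge_{a\in I}C_{|a|}(\mathfrak{so}_{2\lambda_a})^+
\]
admits a stable section up to homotopy. For this I would run through Section~\ref{sec:proof} replacing $U(\lambda_a)$ by $O(2\lambda_a)$, $\mathfrak{u}_{\lambda_a}$ by $\mathfrak{so}_{2\lambda_a}$, and the maximal torus of $U(\lambda_a)$ by that of $SO(2\lambda_a)$: build the approximations $X_a$, $\mathfrak{X}_a$ and the maps $\phi^+_a\co\mathfrak{X}_a\to C_{|a|}(\mathfrak{so}_{2\lambda_a})^+$; show, as in Lemma~\ref{lem:presection} via the induction Lemma~\ref{lem:stablesection} with $G=O(k)$, that the group-to-Lie-algebra comparison $\textnormal{id}\wedge\bigwedge_a\pi_a$ has a stable section; and show, as in Lemma~\ref{lem:approximation}, that the Lie-algebra-to-commuting-variety comparison $\textnormal{id}\wedge\bigwedge_a\phi^+_a$ is a $\Z[1/|W|]$-homology isomorphism using the orthogonal instance of Lemma~\ref{lem:actionmapliealgebra} --- the Weyl group of $SO(2\lambda_a)$ has order $2^{\lambda_a-1}\lambda_a!$, whose primes divide $2^{m-1+\varepsilon}m!$. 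The equivariant sections needed as input are, for $|a|=1$, the Pontryagin--Thom collapse splitting $\mathfrak{so}_{2\lambda_a}^+$ off $SO(2\lambda_a)_+$ --- obtained from $\textnormal{Mat}_{2\lambda_a}(\R)=\mathfrak{so}_{2\lambda_a}\oplus\textnormal{Sym}_{2\lambda_a}(\R)$ with $GL_{2\lambda_a}^+(\R)$ as a tubular neighbourhood of $SO(2\lambda_a)$, which is $O(2\lambda_a)$-equivariant because conjugation preserves the determinant --- and, for $|a|>1$, the equivariant torus splitting of input (ii) applied to $SO(2\lambda_a)$. Composing the section of the first comparison, a stable inverse of the second, and the map $\varphi_\lambda$ assembled from action maps and block sums as in Section~\ref{sec:proof}, and using that the resulting square commutes, yields the stable section of $\zeta_\lambda$.

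Finally I would globalise as in Corollary~\ref{cor:splittingq}. The component $\Hom(\Z^n,SO(k))_{\mathds{1}}$ is a compact semialgebraic set (a clopen subset of the real algebraic variety $\Hom(\Z^n,SO(k))$), and its subspaces $F^\ell\Hom(\Z^n,SO(k))_{\mathds{1}}$ --- the $n$-tuples the block sum of whose entries has at least $m-\ell$ eigenvalues equal to $1$ --- are cut out by vanishing of minors exactly as in Lemma~\ref{lem:cwstructure}, so the semialgebraic triangulation theorems used there furnish compatible ordinary and $O(k)$-equivariant CW-structures, hence for each $\ell$ a cofibre sequence whose cofibre is $\bigvee_{\lambda\in f^{-1}(\ell)}\big((O(k)/O(2\lambda_0+\varepsilon))_+\wedge_{\prod_{0\neq a\in I}O(2\lambda_a)}\bigwedge_{a\in I}C_{|a|}(\mathfrak{so}_{2\lambda_a})^+\big)$. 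Wedging the per-stratum sections and inducting on $\ell$ gives the asserted stable equivalence. The step I expect to require the most care is not a computation but the uniform bookkeeping: checking that Baird's isomorphism, the stratum identification, and the equivariant torus splitting are all phrased for the identity component, and that the passage from $SO(k)$ to $O(k)$ in the Stiefel and flag factors --- needed so that the stabiliser $O(2\lambda_0+\varepsilon)\times\prod_{0\neq a\in I}O(2\lambda_a)$ is the same at every stratum --- stays compatible with the $O(k)$-equivariance of every map involved. The only genuinely external ingredient is \cite[Remark 5.3]{CHM}; granting it, the argument is a formal copy of the unitary case, and the odd case $k=2m+1$ adds nothing new, since the extra fixed line lies in the $V_0$-block, which carries no commuting-variety data (the factor indexed by $a=0$ being $C_0(\mathfrak{so}_{2\lambda_0})^+=S^0$), so the Cayley transform is only applied to even-dimensional blocks, where it is a genuine equivariant diffeomorphism onto the locus where $g-\mathds{1}$ is invertible.
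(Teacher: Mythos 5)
Your proposal is correct and takes essentially the same route as the paper, which itself disposes of the orthogonal case with a one-line reduction (``proceeding as before we obtain Theorem \ref{thm:splittingsom}''); you have simply spelled out the steps. In particular, the three inputs you isolate --- Baird's $\Z[1/|W|]$-isomorphism applied to the identity component, the signed-symmetric equivariance of the CHM torus splitting (\cite[Remark 5.3]{CHM}), and the real Cayley transform $\mathfrak{so}_{2\lambda_a}\to SO(2\lambda_a)$ on even-dimensional blocks together with the Pontryagin--Thom collapse from the real polar decomposition $\textnormal{Mat}(\R)=\mathfrak{so}\oplus\textnormal{Sym}(\R)$ --- are exactly what the paper's argument implicitly relies on, and your observation that the extra odd line stays inside the $a=0$ block (whose factor is $C_0(\mathfrak{so}_{2\lambda_0})^+=S^0$) is precisely why the $k=2m+1$ case introduces no new issues.
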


\begin{example}
Consider the case $k=3$. Then $m=1$ and thus, as in Example \ref{ex:sp1}, the poset of partitions is in bijection with $I=\{0,1\}^n$. The theorem implies a stable decomposition
\[
\Hom(\Z^n,SO(3))_{\mathds{1}}^+ \simeq S^0\vee  \bigvee_{1\leq l \leq n}\left( \bigvee^{\binom{n}{l}} (O(3)/O(1))_+\wedge_{O(2)} C_l(\mathfrak{so}_2)^+\right)\, .
\]
The summand $S^0$ comes from the partition $\lambda\in \mathcal{P}$ with $\lambda_0=1$. A priori the splitting holds away from $2$. However, the only commuting varieties that appear are those for $\mathfrak{so}_2$. Because the Weyl group is trivial for $\mathfrak{so}_2$, inspection of the proof then shows that the splitting holds without inverting $2$ (namely, the step of Lemma \ref{lem:approximation} can be achieved without inverting primes). To identify the summands we note that $C_l(\mathfrak{so}_2)=\mathfrak{so}_2^l$ and that the subgroup $SO(2)\subset O(2)$ acts trivially on $\mathfrak{so}_2^l$. The residual action of $O(2)/SO(2)\cong \Z/2$ on $\mathfrak{so}_2\cong \R$ is via sign, so that
\[
O(3)/O(1)\times_{O(2)} \mathfrak{so}_2^l\cong L\oplus \cdots \oplus L \; \textnormal{($l$ times)}\,,
\]
where $L$ is the tautological line bundle over $\R P^2$. The Thom space of $lL$ is known to be homeomorphic with the stunted projective space $\R P^{l+2}/\R P^{l-1}$ (see e.g. \cite[Lemma 3.2]{BJS}). Therefore, we obtain a stable splitting
\[
\Hom(\Z^n,SO(3))_\mathds{1} \simeq \bigvee_{1\leq l \leq n} \left(\bigvee^{\binom{n}{l}} \R P^{l+2}/\R P^{l-1}\right)\, .
\]
(Here we have omitted the disjoint basepoint.) This splitting agrees with the one obtained in \cite[Section 7]{ACG} and with the homology computations given in \cite{Sjerve}. 
\end{example}

\begin{remark}
Notice that when we choose $n=1$ in the previous example, the stable splitting sees no interesting information as $SO(3)\cong \R P^3$. In contrast, Miller's splitting of $SO(n)$ produces the well-known stable equivalence $SO(3)\simeq \R P^2\vee S^3$.
\end{remark}

\section{Steenrod powers and a negative result} \label{sec:nonsplitting}

In this section we show that the stable splitting of Theorem \ref{thm:splitting1} does not hold integrally:

\begin{theorem} \label{thm:nonsplitting}
Let $p$ be a prime and let $n\geq 2$ be an integer. The quotient map
\[
\zeta\co \Hom(\Z^n,U(p))_+\to C_n(\mathfrak{u}_{p})^+
\]
does not have a stable section up to homotopy at the prime $p$.
\end{theorem}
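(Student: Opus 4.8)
The plan is to assume for contradiction that a stable section $s\co C_n(\mathfrak{u}_p)^+\to \Hom(\Z^n,U(p))_+$ of $\zeta$ exists at the prime $p$, and to derive a contradiction by comparing mod-$p$ cohomology as modules over the Steenrod algebra. The key is to relate $\Hom(\Z^n,U(p))$ to a symmetric product of spheres, whose $p$-local stable splitting is governed by the Steenrod algebra action in a way that is by now classical (Steenrod, Nakaoka). First I would pass to the quotient by the conjugation action of $U(p)$: the moduli space $\Hom(\Z^n,U(p))/U(p)$ is the symmetric product $\textnormal{SP}^p((S^1)^n)$, and the quotient of $C_n(\mathfrak{u}_p)^+$ by the adjoint $U(p)$-action is a symmetric smash product of spheres, i.e. $\textnormal{SP}^p$ applied to a sphere (with the $0$-section collapsed), which is the top stratum of the symmetric-product filtration of $\textnormal{SP}^p(S^N)$ for suitable $N$. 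The stable section $s$, being $U(p)$-equivariant up to homotopy (or by averaging / transfer after inverting nothing but staying $p$-locally, using that the relevant quotient maps behave well), should descend to a stable section of the collapse map $\textnormal{SP}^p((S^1)^n)_+\to \textnormal{SP}^p(S^N)/\textnormal{SP}^{p-1}(S^N)$ at the prime $p$.

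The contradiction then comes from the following well-known fact: the top filtration quotient $\textnormal{SP}^p(S^N)/\textnormal{SP}^{p-1}(S^N)$ does \emph{not} split off stably from $\textnormal{SP}^p(S^N)$ at the prime $p$; equivalently, in $H^*(\textnormal{SP}^p(S^N);\F_p)$ the class carried by the top stratum is hit by a Steenrod operation (a $p$-th power operation $P^j$ or the Bockstein composed with one) coming from a lower stratum. This is exactly the phenomenon in \cite[Section 22]{Steenrod}: the associated graded of the symmetric product filtration is a wedge of Thom spectra of bundles over configuration/classifying spaces, and the attaching data between consecutive strata is detected by Steenrod operations; at the prime $p$ the passage from $\textnormal{SP}^{p-1}$ to $\textnormal{SP}^p$ is non-trivial precisely because $\Sigma_p$ contains a $p$-cycle. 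So the plan is: (i) identify $H^*(C_n(\mathfrak{u}_p)^+;\F_p)$, using Theorem \ref{thm:main3} and the stated mod-$p$ computation, pinning down the top cohomology class $u$ and checking it is in the image of $\zeta^*$; (ii) locate a class $w\in H^*(\Hom(\Z^n,U(p));\F_p)$ of lower degree and a Steenrod operation $\theta$ (a suitable $P^j$, possibly with a Bockstein) with $\theta(w)=\zeta^*(u)$, by pulling back the corresponding relation in the symmetric product via the map $\Hom(\Z^n,U(p))\to \textnormal{SP}^p((S^1)^n)$ — this is the step where one genuinely uses that $n\ge 2$, since for $n=1$ the space is just $U(p)$ and Miller's splitting does hold; (iii) observe that if $s$ were a stable section then $\zeta^*$ would be a split surjection, hence $\zeta^*(u)$ could not be decomposable as $\theta(w)$ with $w$ itself in $\im(\zeta^*)$ forced to be $0$ for degree reasons — contradiction with $\theta(\zeta^*(u)) = \zeta^*(\theta(u))$ and $\theta(u)=0$ in $H^*(C_n(\mathfrak{u}_p)^+;\F_p)$ (the target being $p$-locally a sphere up to the correction terms, its Steenrod operations vanish on the relevant class).

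More concretely, the cleanest route is probably to work directly with the subspace of diagonalisable tuples and the $p$-cycle $\sigma\in\Sigma_p\subseteq U(p)$: the $n$-torus $(S^1)^n$ sits inside $\Hom(\Z^n,U(p))$ via $t\mapsto (t\cdot\mathds 1,\dots)$, no — rather one uses the configuration of $p$ points and the regular representation of $\Z/p$, so that the $\Z/p$-equivariant structure on $((S^1)^n)^{\wedge p}$ produces, after taking homotopy orbits and collapsing, exactly the extended power $(E\Z/p)_+\wedge_{\Z/p}((S^1)^n)^{\wedge p}$ mapping into $\textnormal{SP}^p((S^1)^n)$; its cohomology is computed by the Steenrod/Milnor construction and the relevant differential/operation is the one detecting the Araki--Kudo/Dyer--Lashof–type class. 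Translating: $\zeta^*(u)$ equals $P^{j}$ (for the appropriate $j$ dictated by the degree $nr$ or $d+(n-1)r$ from Theorem \ref{thm:main3}) applied to a lower class coming from $\textnormal{SP}^{p-1}$, and that lower class maps to $0$ under $\zeta^*$ for dimension reasons, while $P^j u=0$ in the target. Hence $\zeta^*$ is not split injective, so $\zeta$ has no stable section at $p$.

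\medskip
\noindent\textbf{Main obstacle.} The hard part is step (ii): making precise the comparison between $\Hom(\Z^n,U(p))$ and the symmetric product $\textnormal{SP}^p((S^1)^n)$ at the level needed to \emph{import} a specific Steenrod-operation relation — i.e. showing that the class $\zeta^*(u)\in H^*(\Hom(\Z^n,U(p));\F_p)$ is decomposable under the Steenrod algebra, with the decomposition coming from a class that is \emph{not} in the image of $\zeta^*$. One must control the fibres of $\Hom(\Z^n,U(p))\to \textnormal{SP}^p((S^1)^n)$ (which are not points — they are flag-manifold–like, with some non-regular strata) enough to know that the relevant cohomology class and its Steenrod image are pulled back from the moduli space; the non-free locus of the $U(p)$-action (tuples with repeated eigenvalues) is where care is needed, and one likely restricts attention to an open dense $U(p)$-free part or uses a transfer argument to see that the $p$-primary information is detected on $\textnormal{SP}^p((S^1)^n)$. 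Once that comparison is set up, the non-splitting of the top stratum of the symmetric product at the prime $p$ is standard and supplies the contradiction.
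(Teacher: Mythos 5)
Your overall strategy is the paper's: compare $\Hom(\Z^n,U(p))$ and $C_n(\mathfrak{u}_p)^+$ with the $p$-th symmetric power of a sphere via the two orbit maps, and import a Steenrod-operation relation (Nakaoka's class) that obstructs a stable section. You have also correctly flagged, in your ``Main obstacle'' paragraph, the place where real work is needed. But there are two substantive problems with the proposal as written.

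First, the framing in your second paragraph --- descending the hypothetical stable section $s$ to a stable section of $\textnormal{SP}^p((S^1)^n)_+\to\overline{\textnormal{SP}}^p(S^N)$ --- is a detour that does not go through. The section $\sigma\colon C_n(\mathfrak{u}_p)^+\to\Hom(\Z^n,U(p))_+$ is only a stable map, not $U(p)$-equivariant; there is no reason it should pass to the quotients, and an averaging or transfer argument would have to be built from scratch. The paper avoids this entirely by running the argument in the opposite direction: it defines maps $\pi_n\colon\Hom(\Z^n,U(p))\to\textnormal{SP}^p(S^n)$ and $\bar\pi_n\colon C_n(\mathfrak{u}_p)^+\to\overline{\textnormal{SP}}^p(S^n)$ by collapsing to the orbit space and further collapsing the $(n-1)$-skeleton of $(S^1)^n$, then \emph{pulls back} Nakaoka's identity $\bar\zeta^\ast(v_n)=\mathscr{P}^1(u_n)$ along these maps to get classes $a_n=\pi_n^\ast(u_n)$ and $b_n=\bar\pi_n^\ast(v_n)$ with $\zeta^\ast(b_n)=\mathscr{P}^1(a_n)$ in $H^\ast(\Hom(\Z^n,U(p));\F_p)$. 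No equivariance of $\sigma$ enters.

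Second, the argument that closes the contradiction is slightly garbled (you write that $\zeta^\ast$ would be a ``split surjection''; a section $\sigma$ of $\zeta$ makes $\zeta^\ast$ split \emph{injective}, $\sigma^\ast\zeta^\ast=\mathrm{id}$), and it is missing the connectivity input that makes it work. The clean version is: $a_n$ lives in degree $n$, and $C_n(\mathfrak{u}_p)^+\cong\Sigma^{n}C_n(\mathfrak{su}_p)^+$ is $n$-connected (Proposition \ref{prop:centresuspension}), so $\sigma^\ast(a_n)\in H^n(C_n(\mathfrak{u}_p)^+)=0$; hence $b_n=\sigma^\ast\zeta^\ast(b_n)=\mathscr{P}^1(\sigma^\ast(a_n))=0$, contradicting $b_n\neq 0$. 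There is no need to analyse whether classes are ``in the image of $\zeta^\ast$.''

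Finally, the step you flag as the hard part --- showing $b_n\neq 0$, i.e.\ that $\bar\pi_n$ is nontrivial on the Nakaoka class --- is indeed where the real content lies, and you leave it open. The paper does it in two steps (Lemmas \ref{lem:reductionton2} and \ref{lem:maptosphere}): a suspension argument reduces to $n=2$, and then one shows $\bar\pi_2\colon C_2(\mathfrak{u}_p)^+\to\overline{\textnormal{SP}}^p(S^2)\cong S^{2p}$ is an isomorphism on $H^{2p}$. That in turn is a concrete computation: one factors the inclusion $(\mathfrak{t}^2)^+\to C_2(\mathfrak{u}_p)^+$ through a sphere bundle over $U(p)/N(T)$, identifies the first $d_2$ in the Serre spectral sequence with multiplication by the mod-$p$ Euler class of that bundle, and checks (via a restriction to the Sylow $p$-subgroup of $\Sigma_p$) that this Euler class is nonzero. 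Without filling this in, the proposal is a correct outline but not a proof.
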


If $X$ is a space and $n\in \mathbb{N}$, we denote by
\[
\textnormal{SP}^n(X):=X^n/\Sigma_n
\]
the $n$-th symmetric power of $X$. If $X$ is based, there is an evident inclusion map $\textnormal{SP}^{n-1}(X)\to \textnormal{SP}^n(X)$ whose cofibre is the reduced symmetric power
\[
\overline{\textnormal{SP}}^n(X):= \textnormal{SP}^n(X)/\textnormal{SP}^{n-1}(X)\cong X^{\wedge n}/\Sigma_n\, .
\]
We will prove Theorem \ref{thm:nonsplitting} by comparing $\Hom(\Z^n,U(p))$ and $C_n(\mathfrak{u}_p)^+$ to the $p$-th (reduced) symmetric powers of $S^n$. Steenrod operations give obstructions for a stable splitting of the natural map $\textnormal{SP}^p(S^n)\to \overline{\textnormal{SP}}^p(S^n)$. We will lift those obstructions to $\Hom(\Z^n,U(p))$.

Fix $s\geq 1$. We will carry out most of the proof of Theorem \ref{thm:nonsplitting} for the unitary group of rank $p^s$; only in Lemma \ref{lem:maptosphere} will we have to choose $s=1$.

Let $T\subseteq U(p^s)$ be a maximal torus. Observe that every commuting $n$-tuple in $U(p^s)$ is conjugate to an $n$-tuple in $T$. It follows that the quotient of $\Hom(\Z^n,U(p^s))$ by the adjoint action of $U(p^s)$ is homeomorphic to the quotient of $T^n$ by the diagonal action of the Weyl group $\Sigma_{p^s}$. Since $T\cong (S^1)^{p^s}$ and $\Sigma_{p^s}$ permutes the factors, we obtain a homeomorphism
\[
\Hom(\Z^n,U(p^s))/U(p^s)\cong \textnormal{SP}^{p^s}((S^1)^n)\,.
\]
Collapsing further the $(n-1)$-skeleton of $(S^1)^n$ we obtain a composite map
\[
\pi_n\co \Hom(\Z^n,U(p^s))\to \textnormal{SP}^{p^s}(S^n)\,.
\]

The discussion for the commuting variety $C_n(\mathfrak{u}_{p^s})^+$ is similar. Let $\mathfrak{t}\subseteq \mathfrak{u}_{p^s}$ be the Lie algebra of $T$. As every commuting $n$-tuple in $\mathfrak{u}_{p^s}$ is contained in a Cartan subalgebra of $\mathfrak{u}_{p^s}$ (see Proposition \ref{prop:cartansubalgebra} in the appendix) we can identify $C_n(\mathfrak{u}_p)^+/U(p)$ with the quotient of $(\mathfrak{t}^n)^+$ by the diagonal action of $\Sigma_{p^s}$.  As $(\mathfrak{t}^n)^+\cong ((S^1)^{\wedge n})^{\wedge p^s}$ with $\Sigma_{p^s}$ permuting the smash factors, there is a homeomorphism
\[
C_n(\mathfrak{u}_p)^+/U(p)\cong \overline{\textnormal{SP}}^{p^s}(S^n)\,.
\]
In particular, we have a quotient map
\[
\bar{\pi}_n\co C_n(\mathfrak{u}_{p^s})^+\to \overline{\textnormal{SP}}^{p^s}(S^n)\,.
\]

It is easily seen that the maps $\pi_n$, $\bar{\pi}_n$ and the canonical projection $\bar{\zeta}\co \textnormal{SP}^{p^s}(S^n)\to  \overline{\textnormal{SP}}^{p^s}(S^n)$ fit into a commutative diagram
\begin{equation} \label{dgr:symmetricproduct}
\xymatrix{
\Hom(\Z^n,U(p^s)) \ar[r]^-{\pi_n} \ar[d]^-{\zeta} & \textnormal{SP}^{p^s}(S^n) \ar[d]^-{\bar{\zeta}} \\
C_n(\mathfrak{u}_{p^s})^+ \ar[r]^-{\bar{\pi}_n} & \overline{\textnormal{SP}}^{p^s}(S^n) .
}
\end{equation}

Let $\mathscr{P}^i$ denote the Steenrod reduced $p$-th power; when $p=2$ we interpret $\mathscr{P}^i$ as $\textnormal{Sq}^{2i}$. In what follows we will omit the coefficient ring for cohomology; if not noted otherwise, it will be $\F_p$. 

The mod-$p$ cohomology of $\textnormal{SP}^{p^s}(S^n)$ was computed by Nakaoka \cite{Nakaoka}. Let $u_n\in H^n(\textnormal{SP}^{p^s}(S^n))\cong \F_p$ be a generator. By \cite[Corollary 2]{Nakaoka} there exists a unique class
\[
v_n\in H^{n+2(p^s-1)}(\overline{\textnormal{SP}}^{p^s}(S^n))
\]
such that $\bar{\zeta}^\ast(v_n)=(\mathscr{P}^{p^{s-1}} \cdots  \mathscr{P}^1)(u_n)$. Setting $a_{n,s}:=\pi_n^\ast(u_n)$ and $b_{n,s}:=\bar{\pi}_n^\ast(v_n)$ we have that
\[
\zeta^\ast(b_{n,s})=(\mathscr{P}^{p^{s-1}} \cdots  \mathscr{P}^1)(a_{n,s})\, .
\]
The class $b_{n,s}$ will be the obstruction used to prove Theorem \ref{thm:nonsplitting}:
\begin{lemma} \label{lem:nonsplitting}
Suppose that $b_{n,s}\neq 0$. Then the quotient map
\[
\zeta\co \Hom(\Z^n,U(p^s))_+\to  C_n(\mathfrak{u}_{p^s})^+
\]
does not have a stable section up to homotopy at the prime $p$.
\end{lemma}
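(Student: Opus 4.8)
The plan is to argue by contradiction, exploiting the commutative diagram \eqref{dgr:symmetricproduct} and the naturality of Steenrod operations. First I would suppose that a stable section $s\co C_n(\mathfrak{u}_{p^s})^+\to \Hom(\Z^n,U(p^s))_+$ of $\zeta$ exists, i.e. $\zeta\circ s\simeq \mathrm{id}$ stably at $p$. Applying mod-$p$ cohomology gives $s^\ast\circ \zeta^\ast=\mathrm{id}$ on $\tilde H^\ast(C_n(\mathfrak{u}_{p^s})^+;\F_p)$; in particular $s^\ast$ is surjective and $s^\ast(\zeta^\ast(b_{n,s}))=b_{n,s}$. Since $b_{n,s}\neq 0$ by hypothesis, the class $\zeta^\ast(b_{n,s})=(\mathscr P^{p^{s-1}}\cdots\mathscr P^1)(a_{n,s})\in \tilde H^{n+2(p^s-1)}(\Hom(\Z^n,U(p^s));\F_p)$ is nonzero.

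The key step is then to locate the \emph{bottom} class $a_{n,s}=\pi_n^\ast(u_n)$ and to understand where $s^\ast$ can send it. Because $s$ is a stable map and cohomology is a stable invariant, $s^\ast(a_{n,s})\in \tilde H^n(C_n(\mathfrak{u}_{p^s})^+;\F_p)$. I would use the appendix's structural results on $C_n(\mathfrak{u}_{p^s})^+$ — in particular that $C_n(\mathfrak{g})^+$ is a $\Z[1/|W|]$-homology sphere by Theorem \ref{thm:main3}, so that localised away from nothing its mod-$p$ cohomology is concentrated in the appropriate dimensions — to control $\tilde H^n(C_n(\mathfrak{u}_{p^s})^+;\F_p)$ in the relevant degree. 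The point will be that the total Steenrod power $\mathscr P^{p^{s-1}}\cdots\mathscr P^1$ \emph{cannot} act nontrivially on any class in $\tilde H^n(C_n(\mathfrak{u}_{p^s})^+;\F_p)$ that could be $s^\ast(a_{n,s})$: either by a dimension/sparseness argument coming from the homology-sphere structure, or because the relevant cohomology in degree $n$ is spanned by $b_{n,s}$-unrelated classes on which the operation vanishes. Combined with naturality, $\mathscr P^{p^{s-1}}\cdots\mathscr P^1\bigl(s^\ast(a_{n,s})\bigr)=s^\ast\bigl((\mathscr P^{p^{s-1}}\cdots\mathscr P^1)(a_{n,s})\bigr)=s^\ast(\zeta^\ast(b_{n,s}))=b_{n,s}\neq 0$, which is the desired contradiction.

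To make the middle argument clean I would instead route everything through the \emph{symmetric product side} of \eqref{dgr:symmetricproduct}, which is where Nakaoka's computation lives. A section of $\zeta$ would, via the diagram, be compatible with the projection $\bar\zeta\co \textnormal{SP}^{p^s}(S^n)\to\overline{\textnormal{SP}}^{p^s}(S^n)$ after applying $\pi_n$ and $\bar\pi_n$; the failure of $\bar\zeta$ to split stably is exactly the classical obstruction that $\mathscr P^{p^{s-1}}\cdots\mathscr P^1(u_n)=\bar\zeta^\ast(v_n)\neq 0$ while $v_n$ pulls back from the quotient. Pulling this obstruction back along $\pi_n$ and $\bar\pi_n$ — which is legitimate precisely because the square commutes — transports it to $\Hom(\Z^n,U(p^s))$, and a hypothetical stable section of $\zeta$ at $p$ would contradict the nonvanishing of $b_{n,s}$ after applying $s^\ast$ and using naturality of $\mathscr P^i$ once more.

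The main obstacle, and the reason this is stated as a lemma with the hypothesis $b_{n,s}\neq 0$ rather than proved outright here, is verifying that hypothesis: one must show $b_{n,s}=\bar\pi_n^\ast(v_n)$ is actually nonzero, i.e. that the iterated Steenrod power survives the pullback along $\bar\pi_n\co C_n(\mathfrak{u}_{p^s})^+\to\overline{\textnormal{SP}}^{p^s}(S^n)$. This is a genuine cohomological computation — it requires knowing $\bar\pi_n^\ast$ is injective in the relevant degree (or at least nonzero on $v_n$), which is presumably where the choice $s=1$ and Lemma \ref{lem:maptosphere} enter, together with the explicit description of $H^\ast(C_n(\mathfrak{u}_p)^+)$ from the appendix. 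The present lemma isolates the soft part of the argument (naturality + contradiction); the hard part is deferred to the subsequent verification that $b_{n,1}\neq 0$.
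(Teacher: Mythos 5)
Your contradiction scheme is the right one, and the first half of your first paragraph is essentially the paper's proof: assume a stable section $s$ at $p$, use $s^\ast\zeta^\ast=\mathrm{id}$, and apply naturality of the iterated power operation to get $(\mathscr P^{p^{s-1}}\cdots\mathscr P^1)\bigl(s^\ast(a_{n,s})\bigr)=b_{n,s}\neq 0$. Where your argument goes astray is in the step that forces $s^\ast(a_{n,s})=0$. You reach for Theorem~\ref{thm:main3}, the statement that $C_n(\mathfrak g)^+$ is a $\Z[1/|W|]$-homology sphere; but for $\mathfrak g=\mathfrak u_{p^s}$ the Weyl group has order $(p^s)!$, divisible by $p$, so that theorem is a statement about $\Z[1/|W|]$-coefficients only and says nothing about $H^\ast(-;\F_p)$. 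Your fallback (``the cohomology in degree $n$ is spanned by $b_{n,s}$-unrelated classes on which the operation vanishes'') does not identify anything concrete and would not withstand scrutiny as written. The input the proof actually needs is Proposition~\ref{prop:centresuspension}: since $\mathfrak u_{p^s}\cong\mathfrak z\oplus\mathfrak{su}_{p^s}$ with $\dim\mathfrak z=1$, one has $C_n(\mathfrak u_{p^s})^+\cong\Sigma^n C_n(\mathfrak{su}_{p^s})^+$, an $n$-fold suspension of a connected space, hence $n$-connected. Therefore $\tilde H^n(C_n(\mathfrak u_{p^s})^+;\F_p)=0$, forcing $s^\ast(a_{n,s})=0$, and the contradiction falls out immediately. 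Your third paragraph (re-routing through the symmetric-product square) is a detour that obscures rather than completes this; no extra diagram chase is needed once one has the connectivity statement. Your closing observation --- that the genuine work is showing $b_{n,s}\neq 0$, deferred to $s=1$ and Lemma~\ref{lem:maptosphere} --- is accurate.
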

\begin{proof}
Suppose that, at the prime $p$, there is a \emph{stable} map
\[
\sigma\co  C_n(\mathfrak{u}_{p^s})^+\to \Hom(\Z^n,U(p^s))_+
\]
such that $\zeta\sigma\simeq id$. Stability of the Steenrod powers gives 
\[
0\neq b_{n,s}=(\zeta\sigma)^\ast(b_{n,s})=(\mathscr{P}^{p^{s-1}} \cdots  \mathscr{P}^1)(\sigma^\ast(a_{n,s}))\, .
\]
By Proposition \ref{prop:centresuspension} in the appendix, $C_n(\mathfrak{u}_{p^s})^+$ is at least $n$-connected. Since $a_{n,s}$ has degree $n$, we have that $\sigma^\ast(a_{n,s})=0$ which is a contradiction.
\end{proof}

Next we reduce to the case $n=2$.

\begin{lemma} \label{lem:reductionton2}
If $b_{2,s}$ is non-zero, then $b_{n,s}$ is non-zero for all $n\geq 2$.
\end{lemma}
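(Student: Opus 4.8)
The plan is to induct on $n$, reducing the statement to the implication $b_{n,s}\neq 0\Rightarrow b_{n+1,s}\neq 0$ for each $n\geq 2$ (the case $n=2$ being the hypothesis). The key device is a ``suspension by a central element'' connecting the dimension-$n$ and dimension-$(n+1)$ data. Since the centre of $\mathfrak{u}_{p^s}$ is the line $\mathfrak{z}=i\R\mathds{1}$, the assignment $((X_1,\dots,X_n),Y)\mapsto(X_1,\dots,X_n,Y)$ is a closed embedding $C_n(\mathfrak{u}_{p^s})\times\mathfrak{z}\hookrightarrow C_{n+1}(\mathfrak{u}_{p^s})$, and passing to one-point compactifications (with $\mathfrak{z}^{+}\cong S^1$) gives a based $U(p^s)$-equivariant map
\[
c_n\co\Sigma C_n(\mathfrak{u}_{p^s})^{+}\longrightarrow C_{n+1}(\mathfrak{u}_{p^s})^{+}.
\]
As $\mathfrak{z}$ is pointwise fixed by the adjoint action, $c_n$ descends to orbit spaces; under the homeomorphisms $C_m(\mathfrak{u}_{p^s})^{+}/U(p^s)\cong\overline{\textnormal{SP}}^{p^s}(S^m)$ it becomes the map $\bar s_n\co\Sigma\overline{\textnormal{SP}}^{p^s}(S^n)\to\overline{\textnormal{SP}}^{p^s}(S^{n+1})$ induced by $(z;x_1,\dots,x_{p^s})\mapsto\bigl((x_1,z),\dots,(x_{p^s},z)\bigr)$ on $S^1\wedge(S^n)^{\wedge p^s}\to(S^n\wedge S^1)^{\wedge p^s}$, so that $\bar\pi_{n+1}\circ c_n=\bar s_n\circ\Sigma\bar\pi_n$. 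The same ``append a coordinate'' recipe applied to $\textnormal{SP}^{p^s}$ produces $s_n\co\Sigma\textnormal{SP}^{p^s}(S^n)\to\textnormal{SP}^{p^s}(S^{n+1})$ compatible with $\bar\zeta$, i.e.\ $\bar\zeta\circ s_n=\bar s_n\circ\Sigma\bar\zeta$.

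Next I would compute $s_n^{\ast}(u_{n+1})$. The generator $u_m$ of $H^m(\textnormal{SP}^{p^s}(S^m);\F_p)\cong\F_p$ spans the bottom nonzero reduced cohomology of the $(m-1)$-connected space $\textnormal{SP}^{p^s}(S^m)$ and is detected by the bottom cell $S^m=\textnormal{SP}^1(S^m)$. Restricting $s_n$ to the suspended bottom cell $\Sigma S^n$ gives the coordinate-interchange map $S^1\wedge S^n\to S^n\wedge S^1=S^{n+1}$, which has degree $\pm1$; hence $s_n^{\ast}(u_{n+1})=\pm\,\Sigma u_n\neq0$. Since Steenrod operations are natural and stable and $\bar\zeta^{\ast}(v_m)=(\mathscr{P}^{p^{s-1}}\cdots\mathscr{P}^{1})(u_m)$ for $m=n,n+1$, combining $\bar\zeta\circ s_n=\bar s_n\circ\Sigma\bar\zeta$ with these facts yields
\[
(\Sigma\bar\zeta)^{\ast}\bigl(\bar s_n^{\ast}v_{n+1}\bigr)=s_n^{\ast}\bar\zeta^{\ast}v_{n+1}=\pm\,\Sigma\bigl((\mathscr{P}^{p^{s-1}}\cdots\mathscr{P}^{1})(u_n)\bigr)=(\Sigma\bar\zeta)^{\ast}(\pm\,\Sigma v_n).
\]
The uniqueness of $v_n$ furnished by \cite[Corollary 2]{Nakaoka} means precisely that $\bar\zeta^{\ast}$ is injective on $H^{n+2(p^s-1)}(\overline{\textnormal{SP}}^{p^s}(S^n);\F_p)$, equivalently that $(\Sigma\bar\zeta)^{\ast}$ is injective in degree $n+1+2(p^s-1)$; therefore $\bar s_n^{\ast}(v_{n+1})=\pm\,\Sigma v_n$.

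Finally, pulling $v_{n+1}$ back along $\bar\pi_{n+1}\circ c_n=\bar s_n\circ\Sigma\bar\pi_n$ gives
\[
c_n^{\ast}(b_{n+1,s})=(\Sigma\bar\pi_n)^{\ast}\bigl(\bar s_n^{\ast}v_{n+1}\bigr)=\pm\,(\Sigma\bar\pi_n)^{\ast}(\Sigma v_n)=\pm\,\Sigma b_{n,s}.
\]
Thus if $b_{n,s}\neq0$ then $\Sigma b_{n,s}\neq0$ by the suspension isomorphism, so $c_n^{\ast}(b_{n+1,s})\neq0$ and in particular $b_{n+1,s}\neq0$, which closes the induction. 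The step I expect to demand the most care is the construction of the suspension maps $c_n,s_n,\bar s_n$ and the verification that the squares relating them to $\bar\pi_n$ and $\bar\zeta$ commute on the nose --- one must check that each map is well defined after collapsing the appropriate skeleta and orbits (the appended coordinate must send everything to the basepoint when it equals the basepoint of $S^1$) and keep precise track of the identifications $C_m(\mathfrak{u}_{p^s})^{+}/U(p^s)\cong\overline{\textnormal{SP}}^{p^s}(S^m)$; once that bookkeeping is in place the cohomological deduction above is routine.
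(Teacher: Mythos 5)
Your proof is correct and follows essentially the same route as the paper's: you build the suspension map on commuting varieties from the central line $\mathfrak{z}$, match it with the symmetric-product structure maps $\tau_n$ and $\bar\tau_n$, and use stability of the classes $u_n$, $v_n$ (deduced from Nakaoka's uniqueness) to pull $b_{n+1,s}$ back to $\Sigma b_{n,s}$. The only difference is cosmetic and in the level of detail --- the paper phrases the stability of $u_n$, $v_n$ via inverse limits while you verify it directly by restricting to the bottom cell and invoking injectivity of $\bar\zeta^\ast$ in the relevant degree --- but the substance is identical.
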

\begin{proof}
Recall that there are maps $S^1\times \textnormal{SP}^{p^s}(S^n)\to \textnormal{SP}^{p^s}(S^{n+1})$ defined by
\[
(t,(x_1,\dots,x_{p^s}))\to (t\wedge x_1,\dots,t\wedge x_{p^s})
\]
which descend to maps
\[
\tau_n\co \Sigma \textnormal{SP}^{p^s}(S^n)\to \textnormal{SP}^{p^s}(S^{n+1})\, .
\]
It is easy to see that for each $n$, $\tau_n$ induces an isomorphism on $H^{n+1}$. Thus, we may assume $u_n$ be induced from a single element $u\in \varprojlim H^{n}(\textnormal{SP}^{p^s}(S^n))\cong \F_p$, so that $\tau_n^\ast(u_{n+1})$ corresponds to $u_n$ under the suspension isomorphism. The maps $\tau_n$ descend further to maps
\[
\bar{\tau}_n\co \Sigma  \overline{\textnormal{SP}}^{p^s}(S^n)\to  \overline{\textnormal{SP}}^{p^s}(S^{n+1})\, .
\]
It follows that also $v_n$ is stable, i.e., that $\bar{\tau}_n^\ast(v_{n+1})$ corresponds to $v_n$ under suspension. Now let $\mathfrak{z}\subseteq \mathfrak{u}_{p^s}$ denote the centre. Upon identifying $\mathfrak{z}^+$ with $S^1$, the map $\mathfrak{z}\times C_n(\mathfrak{u}_{p^s})\to C_{n+1}(\mathfrak{u}_{p^s})$ obtained by restricting the inclusion $\mathfrak{z}\times \mathfrak{u}_{p^s}^n\subseteq \mathfrak{u}_{p^s}^{n+1}$ induces a map
\[
\sigma_n\co \Sigma C_n(\mathfrak{u}_{p^s})^+ \to C_{n+1}(\mathfrak{u}_{p^s})^+\, .
\]
This map fits into a commutative diagram
\[
\xymatrix{
\Sigma C_n(\mathfrak{u}_{p^s})^+ \ar[r]^-{\Sigma \bar{\pi}_n} \ar[d]^-{\sigma_n} & \Sigma  \overline{\textnormal{SP}}^{p^s}(S^n) \ar[d]^-{\bar{\tau}_n} \\
C_{n+1}(\mathfrak{u}_{p^s})^+ \ar[r]^-{\bar{\pi}_{n+1}} &  \overline{\textnormal{SP}}^{p^s}(S^{n+1})\, .
}
\]
The diagram shows that if $\bar{\pi}_n^\ast(v_n)\neq 0$, then $\bar{\pi}_{n+1}^\ast(v_{n+1})\neq 0$. This gives the desired reduction to the case $n=2$.
\end{proof}

\begin{remark}
The $\{\bar{\tau}_n\}$ are the structure maps of the spectrum $\textnormal{SP}^{p^s}(\mathbb{S})/ \textnormal{SP}^{p^s-1}(\mathbb{S})$ where $\mathbb{S}$ is the sphere spectrum. The $\{v_n\}$ come from a class in spectrum cohomology
\[
v\in  \varprojlim H^{n+2(p^s-1)}(\overline{\textnormal{SP}}^{p^s}(S^n))\cong H^{2(p^s-1)}(\textnormal{SP}^{p^s}(\mathbb{S})/\textnormal{SP}^{p^s-1}(\mathbb{S}))\, .
\]
The proof showed that the spaces $\{C_n(\mathfrak{u}_{p^s})^+\}$ define a spectrum, too, with structure maps $\{\sigma_n\}$. The maps $\{\bar{\pi}_n\}$ give a map of spectra
\[
\bar{\pi}\co \{C_n(\mathfrak{u}_{p^s})^+\}\to \textnormal{SP}^{p^s}(\mathbb{S})/ \textnormal{SP}^{p^s-1}(\mathbb{S})
\]
which, when $s=1$, satisfies $\bar{\pi}^\ast(v)\neq 0$ as the next lemma will show.
\end{remark}

The following lemma shows that $b_{2,1}\neq 0$, and thus $b_{n,1}\neq 0$ for all $n\geq 2$ according to Lemma \ref{lem:reductionton2}. In view of Lemma \ref{lem:nonsplitting} this finishes the proof of Theorem \ref{thm:nonsplitting}.
\begin{lemma} \label{lem:maptosphere}
The map $\bar{\pi}_2\co C_2(\mathfrak{u}_p)^+ \to \overline{\textnormal{SP}}^{p}(S^2)\cong S^{2p}$ is an isomorphism in $H^{2p}$.
\end{lemma}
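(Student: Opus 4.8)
The plan is to recognise that, under natural homeomorphisms, $\bar\pi_2$ is the characteristic polynomial map on normal matrices, and then to read the top class off a Leray spectral sequence.

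First I would set up the normal matrix model. The linear map $(X,Y)\mapsto X+iY$ is a homeomorphism of $C_2(\mathfrak{u}_p)$ onto the space $\mathcal{N}(p)\subseteq\mathrm{Mat}_p(\C)$ of normal matrices: if $X^\ast=-X$, $Y^\ast=-Y$ and $[X,Y]=0$ then $(X+iY)(X+iY)^\ast=-X^2-Y^2=(X+iY)^\ast(X+iY)$, and one recovers $X=\tfrac12(N-N^\ast)$, $Y=\tfrac1{2i}(N+N^\ast)$ from a normal $N$. Under the identifications $\overline{\textnormal{SP}}^p(S^2)=(\mathfrak{t}^2/\Sigma_p)^+=(\textnormal{SP}^p(\R^2))^+$ and $\R^2\cong\C$, the joint-spectrum map $\bar\pi_2$ becomes, up to a linear automorphism of the target $\textnormal{SP}^p(\C)=\C^p$, the one-point compactification of
\[
\chi\co\mathcal{N}(p)\to\textnormal{SP}^p(\C)=\C^p,\qquad N\mapsto\det(zI-N).
\]
Since $\tilde{H}^\ast((-)^+)\cong H^\ast_c(-)$ and, for a proper map $f$, the map $(f^+)^\ast$ corresponds to $f^\ast$ on compactly supported cohomology, it then suffices to show that $\chi^\ast\co H^{2p}_c(\C^p;\F_p)\to H^{2p}_c(\mathcal{N}(p);\F_p)$ is nonzero. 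Here $\chi$ is \emph{proper}: the preimage of a compact $K\subseteq\C^p$ consists of normal matrices whose eigenvalues are roots of polynomials with coefficients in $K$, hence are bounded, so these matrices have bounded operator norm, and the preimage, being also closed in $\mathrm{Mat}_p(\C)$, is compact. Moreover every fibre of $\chi$ is \emph{connected}: a normal matrix is diagonalised by a unitary, so the normal matrices with a fixed characteristic polynomial whose roots have multiplicities $\mu$ form the single $U(p)$-orbit $U(p)/U(\mu)$.

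For the main step, as $\chi$ is proper the Leray spectral sequence for compactly supported cohomology reads $E_2^{s,t}=H^s_c(\C^p;R^t\chi_!\underline{\F_p})\Rightarrow H^{s+t}_c(\mathcal{N}(p);\F_p)$, with $R^t\chi_!=R^t\chi_\ast$ by properness. Connectedness of the fibres makes the unit $\underline{\F_p}\to R^0\chi_\ast\underline{\F_p}$ an isomorphism on stalks, hence an isomorphism, so the $t=0$ row is $E_2^{s,0}=H^s_c(\C^p;\F_p)$, which equals $\F_p$ for $s=2p$ and $0$ otherwise, with edge homomorphism $\chi^\ast$. No differential can leave $E_r^{2p,0}$ for $r\ge2$ (the targets lie in negative $t$-degree), so $E_2^{2p,0}=E_\infty^{2p,0}=F^{2p}H^{2p}_c(\mathcal{N}(p);\F_p)$ is a subgroup of $H^{2p}_c(\mathcal{N}(p);\F_p)$; thus $\chi^\ast$ is injective in degree $2p$, in particular nonzero, which yields the lemma. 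If one wants the literal statement that $\bar\pi_2^\ast$ is an isomorphism, one checks in addition that the groups $E_\infty^{s,t}$ with $s+t=2p$ and $t>0$ vanish — this holds for $p=2$ — but in any case Lemma \ref{lem:nonsplitting} only uses the nonvanishing of $b_{2,1}=\bar\pi_2^\ast(v_2)$.

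I expect the main obstacle to lie not in the spectral sequence, which is formal once the inputs are in place, but in the geometric bookkeeping: verifying carefully that the composite homeomorphisms genuinely turn $\bar\pi_2$ into $\chi$, and securing the properness and connected-fibre statements; after that the edge homomorphism of the Leray sequence does all the work, with the extra payoff that connectedness of the fibres is exactly what produces the constant sheaf $R^0\chi_\ast\underline{\F_p}=\underline{\F_p}$.
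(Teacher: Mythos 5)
Your geometric setup is sound: the identification $(X,Y)\mapsto X+iY$ of $C_2(\mathfrak{u}_p)$ with the normal matrices $\mathcal{N}(p)$ is correct, and so are the translation of $\bar\pi_2$ into the compactified characteristic-polynomial map $\chi$, the properness of $\chi$ (via spectral radius $=$ operator norm for normal matrices), and the connectedness of the fibres $U(p)/U(\mu)$, which by proper base change gives $R^0\chi_\ast\underline{\F_p}\cong\underline{\F_p}$. The gap is in the spectral-sequence step. From ``no differential can leave $E_r^{2p,0}$'' you conclude $E_2^{2p,0}=E_\infty^{2p,0}$; that inference is backwards. Absence of outgoing differentials shows that $E_\infty^{2p,0}$ is a \emph{quotient} of $E_2^{2p,0}$, not the whole of it. What you actually need for injectivity of the edge map $E_2^{2p,0}\twoheadrightarrow E_\infty^{2p,0}\hookrightarrow H^{2p}_c(\mathcal{N}(p);\F_p)$ is the vanishing of the \emph{incoming} differentials $d_r\colon E_r^{2p-r,\,r-1}\to E_r^{2p,0}$ for $r\ge 2$. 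Their sources live in $H^{2p-r}_c\bigl(\C^p;R^{r-1}\chi_\ast\underline{\F_p}\bigr)$, and since $R^t\chi_\ast\underline{\F_p}$ is a genuinely nontrivial constructible sheaf (on the open stratum of distinct eigenvalues it is locally constant with stalk $H^t(U(p)/T;\F_p)$, which has total rank $p!$), these groups do not vanish for formal reasons. Until they are controlled, $\chi^\ast\ne 0$ is not established, so the conclusion needed to feed Lemma~\ref{lem:nonsplitting} is not obtained.

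The paper takes a different route that dodges this. It composes with the inclusion $i\colon(\mathfrak{t}^2)^+\to C_2(\mathfrak{u}_p)^+$; the composite $i^\ast\circ\bar\pi_2^\ast$ is the pullback along the $\Sigma_p$-quotient $(\mathfrak{t}^2)^+\to(\mathfrak{t}^2)^+/\Sigma_p$, hence multiplication by $p!$ on $H^{2p}(\cdot;\Z_{(p)})$. One then shows $i^\ast\equiv 0\pmod p$: after a double suspension and an unreduced-suspension reduction, $i$ becomes the fibre inclusion of the sphere bundle $SU(p)/S\times_{\Sigma_p}S(\mathfrak{s}^2)\to U(p)/N$, and its mod-$p$ Euler class is shown to be nonzero (Lemma~\ref{lem:eulerclass}) using the structure of $H^\ast(U(p)/N;\F_p)$. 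Since $v_p(p!)=1$, vanishing of $i^\ast$ mod $p$ forces $\bar\pi_2^\ast$ to be a $p$-local isomorphism. To repair your Leray argument you would have to show the relevant incoming differentials vanish; given the constructible sheaves involved this is not obviously lighter than the Euler-class computation the paper does.
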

\begin{proof}
The homeomorphism $\overline{\textnormal{SP}}^{p}(S^2)\cong S^{2p}$ follows from Proposition \ref{prop:chevalley}. It also follows directly from the well-known fact that $\textnormal{SP}^{p}(\mathbb{C}P^1)\cong \mathbb{C}P^{p}$ (see e.g. \cite[Section 22]{Steenrod}).

Let $i\co (\mathfrak{t}^2)^+\to C_2(\mathfrak{u}_p)^+$ be the map induced by the inclusion $\mathfrak{t} \subseteq \mathfrak{u}_p$. To prove the lemma it will be enough to show that
\[
i^\ast\co H^{2p}(C_2(\mathfrak{u}_p)^+)\to H^{2p}((\mathfrak{t}^2)^+)
\]
is trivial with mod-$p$ coefficients. Indeed, consider the composite of $i$ and $\bar{\pi}_2$ on cohomology with $\Z_{(p)}$- and $\F_p$-coefficients. By Theorem \ref{thm:char0} and Lemma \ref{lem:degree2p} we have that $H^{2p}(C_2(\mathfrak{u}_p)^+;\Z_{(p)})\cong \Z_{(p)}$ and $H^{2p}(C_2(\mathfrak{u}_p)^+;\F_p)\cong \F_p$, so that we have a diagram of the following form:

\[
\xymatrix{
\Z_{(p)} \ar[r]^-{\bar{\pi}_2^\ast}_-{\cong} \ar[d] & \Z_{(p)} \ar[r]^-{i^\ast} \ar[d] & \Z_{(p)} \ar[d] \\
\F_p \ar[r]^-{\bar{\pi}_2^\ast} & \F_p \ar[r]^-{i^\ast}_-{0} & \F_p 
}
\]
Here the vertical maps are reduction modulo $p$. If we assume that the lower right map is zero, then the degree of $i^\ast\co \Z_{(p)}\to \Z_{(p)}$ must be divisible by $p$. Since the composite of the top horizontal maps is multiplication by $p!$ (see Remark \ref{rem:degree}), the top left map must be an isomorphism. It follows from the diagram that the lower left map is an isomorphism, too.

It remains to show that $i^\ast$ is trivial with $\F_p$-coefficients.  Let $S\subseteq SU(p)$ be a maximal torus and $\mathfrak{s}$ its Lie algebra, so that $\mathfrak{t}\cong \mathfrak{z}\oplus \mathfrak{s}$. By Proposition \ref{prop:centresuspension} the map $i$ is the double suspension of the inclusion $(\mathfrak{s}^2)^+\to C_2(\mathfrak{su}_p)^+$. By Lemma \ref{lem:suspension}, the latter is the (unreduced) suspension of the inclusion
\[
j \co S(\mathfrak{s}^2)\to C_2^1(\mathfrak{su}_p)\, ,
\]
where $S(\mathfrak{s}^2)\subseteq \mathfrak{s}^2$ is the unit sphere and $C_2^1(\mathfrak{su}_p)$ is the intersection of $C_2(\mathfrak{su}_p)$ with the unit sphere in $\mathfrak{su}_p^2$. Thus, we may equivalently show that $j^\ast$ is trivial with $\F_p$-coefficients.

The inclusion $j$ can be factored as
\[
S(\mathfrak{s}^2) \to SU(p)/S\times_{\Sigma_p} S(\mathfrak{s}^2)  \xrightarrow{\phi'} C_2^1(\mathfrak{su}_p)\,,
\]
where $\phi'$ is the action map defined in (\ref{eq:conjugationmapunitsphere}) and the first map is the inclusion of the fibre of the sphere bundle
\begin{equation} \label{eq:spherebundle0}
SU(p)/S\times_{\Sigma_p} S(\mathfrak{s}^2)\to (SU(p)/S)/\Sigma_p\,.
\end{equation}
We claim that the fibre inclusion is zero in mod-$p$ cohomology. Let $(E_r^{\ast,\ast},d_r)$ be the Serre spectral sequence of (\ref{eq:spherebundle0}). Then $E^{0,2p-3}_2\cong \F_p$ and $d_2\co E^{0,2p-3}_2\to E^{2p-2,0}_2$ is determined by the mod-$p$ Euler class of (\ref{eq:spherebundle0}). In Lemma \ref{lem:eulerclass} we show that the Euler class is non-zero, so $d_2$ is injective and thus $E_\infty^{0,2p-3}=0$. This proves the claim.
\end{proof}

\begin{remark} \label{rem:casesup}
Let us explain why the same line of arguments followed in this section fails to work for $SU(p)$. In the case of $\Hom(\Z^2,U(p))$ diagram (\ref{dgr:symmetricproduct}) takes the form
\[
\xymatrix{
\Hom(\Z^2,U(p)) \ar[r] \ar[d] & \C P^p \ar[d] \\
C_2(\mathfrak{u}_p)^+\ar[r] & S^{2p}
}
\]
where the right vertical map is the projection collapsing $\C P^{p-1}$. The obstruction for splitting the left vertical map comes from the $p$-th power operation
\[
\mathscr{P}^1\co H^2(\C P^p)\to H^{2p}(\C P^{p})\, .
\]
In contrast, for $SU(p)$ the moduli space of representations is
\[
\Hom(\Z^2,SU(p))/SU(p)\cong \C P^{p-1}
\]
(see e.g. \cite[Proposition 6.5]{ACG}), and $\C P^{p-1}$ does not have any non-trivial mod-$p$ Steenrod operations. Indeed, when $p=2$ we know that $C_2(\mathfrak{su}_p)^+$ is a stable summand of $\Hom(\Z^2,SU(p))_+$.

\end{remark}

 \appendix \section{Topology of commuting varieties}\label{sec:commutingvariety}

Suppose that $G$ is a compact connected Lie group of rank $r\geq 1$ and dimension $d$ and let 
$\mathfrak{g}$ be its Lie algebra. In this appendix we make some basic observations about the 
one-point compactification of the commuting variety
\[
C_n(\mathfrak{g})=\{(X_1,\dots,X_n)\in \mathfrak{g}^n\mid [X_i,X_j]=0\textnormal{ for all }1\leq i,j\leq n\}
\]
which we view as a subspace of the Euclidean space $\mathfrak{g}^n\cong \R^{nd}$. As before, we let $G$ act on $C_n(\mathfrak{g})^+$ 
diagonally via the adjoint representation, leaving invariant the point $+$ at infinity.

\subsection{General results} \label{sec:general} We start by observing that the $G$-action on $C_n(\mathfrak{g})^+$ has isotropy groups 
of maximal rank, i.e., the isotropy group at any point of $C_n(\mathfrak{g})^+$ contains a maximal torus of $G$.

\begin{proposition} \label{prop:cartansubalgebra}
Any $(X_{1},\dots,X_{n})\in C_{n}(\g)$ is contained in a Cartan subalgebra of $\g$.
\end{proposition}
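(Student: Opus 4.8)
The plan is to reduce the statement to a classical fact about commuting tuples in the Lie algebra of a compact Lie group: if $X_1,\dots,X_n\in\mathfrak g$ pairwise commute, then they all lie in a common maximal torus' Lie algebra (a Cartan subalgebra). I would build this up in the following steps.

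First I would recall that for a compact Lie group $G$ with Lie algebra $\mathfrak g$ equipped with an $\ad$-invariant inner product, each $X\in\mathfrak g$ generates a closed one-parameter subgroup whose closure is a torus $S_X\subseteq G$, and $X\in\operatorname{Lie}(S_X)=:\mathfrak s_X$. The centraliser $Z_G(X)$ is then a compact (connected, by a standard argument using that $S_X$ is connected and central in $Z_G(X)$, or by Borel--de Siebenthal type reasoning) subgroup containing a maximal torus of $G$; its Lie algebra $\z_{\mathfrak g}(X)$ is the centraliser $\{Y\in\mathfrak g\mid [X,Y]=0\}$. The key point I would establish is that $\z_{\mathfrak g}(X)$ is the Lie algebra of a compact connected group of the \emph{same rank} as $G$.

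The main argument is then an induction on $n$. For $n=1$, pick a maximal torus $S\subseteq S_{X_1}$ containing $S_{X_1}$ (possible since $S_{X_1}$ is a torus in $G$, hence contained in a maximal torus); then $X_1\in\mathfrak s_{X_1}\subseteq\operatorname{Lie}(S)$, which is a Cartan subalgebra. For the inductive step, given commuting $X_1,\dots,X_n$, set $H=Z_G(X_1)$; this is a compact connected group of rank $r$ (the hard input from the previous paragraph), and $X_2,\dots,X_n\in\mathfrak h:=\operatorname{Lie}(H)$, and they still pairwise commute. By induction there is a Cartan subalgebra $\t_H\subseteq\mathfrak h$ containing $X_2,\dots,X_n$; it is the Lie algebra of a maximal torus $S$ of $H$, which has rank $r$, hence $S$ is also a maximal torus of $G$ (same rank, and a maximal torus of $H\ni$ a maximal torus of $G$ is one of $G$). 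Finally $X_1$ lies in the centre of $\mathfrak h$, hence in every Cartan subalgebra of $\mathfrak h$, in particular $X_1\in\t_H$. Thus all of $X_1,\dots,X_n\in\t_H=\operatorname{Lie}(S)$, a Cartan subalgebra of $\g$.

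The step I expect to be the main obstacle is justifying cleanly that the centraliser $Z_G(X_1)$ of a single element is \emph{connected} and of \emph{full rank}, since one wants to stay in the category of compact connected Lie groups to run the induction. I would handle this by noting that $Z_G(X_1)=Z_G(S_{X_1})$ is the centraliser of a torus, and the centraliser of a torus in a compact connected Lie group is connected (a standard theorem, e.g. via the conjugacy of maximal tori applied to $S_{X_1}\cdot T$ for a maximal torus $T$), and it obviously contains any maximal torus through which $S_{X_1}$ factors, so it has rank $r$. With that in hand the induction goes through without further difficulty.
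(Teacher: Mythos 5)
Your proof is correct, but it takes a genuinely different route from the paper's. The paper exponentiates: it sets $g_i=\exp(X_i)$, observes that $[X_i,X_j]=0$ forces the $g_i$ to commute, notes that the path $t\mapsto(\exp(tX_1),\dots,\exp(tX_n))$ lies in the identity component of $\Hom(\Z^n,G)$, invokes a cited result of Baird (his Lemma~4.2) to place $(\exp(tX_1),\dots,\exp(tX_n))$ inside a single maximal torus for each $t$, and then pulls this back to the Lie algebra by choosing $t$ small enough that $\exp$ is injective. Your argument instead stays entirely at the Lie-algebra/centraliser level and runs an induction on $n$: the key input is that $Z_G(X_1)=Z_G(\overline{S}_{X_1})$ is the centraliser of a torus, hence connected and of full rank, so one may replace $G$ by $H=Z_G(X_1)$, find a Cartan subalgebra of $\mathfrak h$ containing $X_2,\dots,X_n$ by induction, and observe that $X_1$ lies in the centre of $\mathfrak h$ and hence in every Cartan subalgebra of $\mathfrak h$, while full rank guarantees a Cartan subalgebra of $\mathfrak h$ is one of $\g$. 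Both proofs are sound; the paper's is shorter but outsources the main content to Baird, whereas yours is self-contained modulo the classical theorem that centralisers of tori in compact connected groups are connected (your parenthetical justification via ``$S_{X_1}\cdot T$'' is not quite a proof — the standard argument closes up the group generated by $S_{X_1}$ and a chosen $g\in Z_G(S_{X_1})$ inside a maximal torus — but you correctly identified it as a standard fact, so this is only a matter of phrasing). In effect your induction internalises the torus-centraliser machinery that underlies Baird's result for $\Hom(\Z^n,G)_{\mathds 1}$ rather than importing it as a black box.
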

\begin{proof}
Suppose that $(X_{1},\dots,X_{n})\in C_{n}(\g)$ and let $g_{i}:=\exp(X_{i})$ for
every $1\le i\le n$. Note that $(g_{1},\dots,g_{n})\in
\Hom(\Z^{n},G)_{\BONE}$, the path--component of the space 
of commuting $n$-tuples $\Hom(\Z^{n},G)$ that contains the trivial representation $\BONE=(1,\dots,1)$.  Indeed, the Baker--Campbell--Hausdorff
formula shows that for every $1\le i,j\le n$
\[
g_{i}g_{j}=\exp(X_{i}+X_{j})=\exp(X_{j}+X_{i})=g_{j}g_{i}.
\]
Now the map $\gamma\co [0,1]\to \Hom(\Z^{n},G)$ given by $t\mapsto
(\exp(tX_{1}),\dots,\exp(tX_{n}))$ provides a path from $\BONE$ to $(g_{1},\dots,g_{n})$. In
particular, for every $t\in [0,1]$ the $n$-tuple
$(\exp(tX_{1}),\dots,\exp(tX_{n}))$ is contained in a maximal
torus of $G$, by \cite[Lemma 4.2]{Baird}. Choose $t>0$ small enough such that, for all $1\leq i \leq n$, $t X_i$ lies in a neighbourhood of $0\in \mathfrak{g}$ on which the exponential map is injective. Let $T$ be a maximal
torus containing $(\exp(tX_{1}),\dots,\exp(tX_{n}))$. Let $\t\subseteq
\g$ be the Lie algebra of $T$. It follows
that $tX_{i}\in \t$ and thus $X_{i}\in \t$ for all $1\le i\le n$. We
conclude that any $\underline{X}:=(X_{1},\dots,X_{n})\in C_{n}(\g)$
is contained in some Cartan subalgebra of $\g$.
\end{proof}

\begin{corollary} \label{cor:maxrankisotropy}
The $G$-action on $C_n(\mathfrak{g})^+$ has isotropy groups of maximal rank.
\end{corollary}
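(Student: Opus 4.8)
The plan is to read this off directly from Proposition \ref{prop:cartansubalgebra}, together with the elementary fact that a maximal torus acts trivially by the adjoint action on its own Lie algebra. There are two types of points to treat: the point $+$ at infinity, and the points of $C_n(\g)\subseteq C_n(\g)^+$.

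First, for the point at infinity the isotropy group is all of $G$, which trivially has maximal rank. Next, fix a point $\underline{X}=(X_1,\dots,X_n)\in C_n(\g)$. By Proposition \ref{prop:cartansubalgebra} there is a Cartan subalgebra $\t\subseteq\g$ with $X_i\in\t$ for every $1\le i\le n$. Let $T\subseteq G$ be the maximal torus with Lie algebra $\t$. Since $T$ is connected and abelian, every $t\in T$ is of the form $t=\exp(Y)$ with $Y\in\t$, and then $\ad(Y)$ annihilates $\t$, so $\operatorname{Ad}(t)=e^{\ad(Y)}$ restricts to the identity on $\t$. Consequently $T$ fixes the tuple $\underline{X}\in\t^n$, i.e.\ $T$ is contained in the isotropy group $G_{\underline{X}}$. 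Hence $G_{\underline{X}}$ contains a maximal torus of $G$ and therefore has maximal rank. As this exhausts all points of $C_n(\g)^+$, the corollary follows.

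There is essentially no obstacle here: the content is entirely in Proposition \ref{prop:cartansubalgebra}, and the remaining verification that $T$ acts trivially on $\t$ is standard. One only has to be mildly careful to include the point at infinity in the statement, which is immediate since $G$ fixes it.
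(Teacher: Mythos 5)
Your proof is correct and follows exactly the same route as the paper: handle the point at infinity trivially, invoke Proposition \ref{prop:cartansubalgebra} to place $\underline{X}$ in a Cartan subalgebra $\t$, and note that the corresponding maximal torus $T$ acts trivially on $\t$, hence lies in $G_{\underline{X}}$. The only difference is cosmetic — you spell out why $\operatorname{Ad}(t)|_\t=\operatorname{id}$ via $\exp$ and $\ad$, which the paper leaves implicit.
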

\begin{proof}
The point $+$ at infinity is fixed by $G$. By Proposition \ref{prop:cartansubalgebra}, any $\underline{X}=(X_1,\dots,X_n)\in C_n(\mathfrak{g})$ lies in a Cartan subalgebra $\mathfrak{t}$, which we view as the Lie algebra of a maximal torus $T\subseteq G$. Then the isotropy
subgroup 
\[
G_{\underline{X}}=\left\{g\in G~|~ \textnormal{Ad}_{g}(X_{i})=X_{i} \,,
\text{ for all } i=1,\dots, n\right\}
\]
where $\textnormal{Ad}_g\co \mathfrak{g}\to \mathfrak{g}$ is the adjoint action of $g\in G$, contains the maximal torus $T$. 
\end{proof}

Next we consider the homology of $C_n(\mathfrak{g})^+$. Let $T\subseteq G$ be a maximal torus, $\mathfrak{t}\subseteq \g$ its Lie algebra, and $W$ the Weyl group relative to $T$. Consider the vector bundle
\[
\mathfrak{t}^n\to G/T\times_{W}\mathfrak{t}^n\to (G/T)/W
\]
associated with the $W$-representation $\mathfrak{t}^n=\mathfrak{t}\oplus \cdots \oplus \mathfrak{t}$. The map
\begin{alignat*}{1}
\phi\co G/T\times_W \mathfrak{t}^{n} & \to C_{n}(\mathfrak{g}) \\
[gT,X_1,\dots,X_{n}] & \mapsto (\textnormal{Ad}_g(X_1),\dots,\textnormal{Ad}_g(X_{n}))\,,
\end{alignat*}
where $\textnormal{Ad}_g\co \mathfrak{g}\to \mathfrak{g}$ is the adjoint action of $g\in G$, is proper. Therefore it extends to a map from the Thom space
\[
\phi^+\co (G/T)_+\wedge_W (\mathfrak{t}^n)^+\to C_{n}(\mathfrak{g})^+\, .
\]
Let $\tilde{\phi}\co G/T\times_W (\mathfrak{t}^n)^+\to C_n(\mathfrak{g})^+$ be the composition of $\phi^+$ and the projection
\[
G/T\times_W (\mathfrak{t}^n)^+\to (G/T)_+\wedge_W (\mathfrak{t}^n)^+
\]
that collapses the section at infinity.

\begin{lemma} \label{lem:actionmapliealgebra}
The map $\tilde{\phi} \co G/T\times_W (\mathfrak{t}^n)^+\to C_n(\mathfrak{g})^+$ induces an isomorphism on homology with coefficients in $\Z[1/|W|]$, and so does the map $\phi^+$.
\end{lemma}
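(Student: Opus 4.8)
The plan is to prove the assertion for $\tilde\phi$ and for $\phi^+$ simultaneously by showing that each of these maps is a proper surjection all of whose fibres are connected and $\Lambda$-acyclic, where $\Lambda:=\Z[1/|W|]$, and then to invoke a Vietoris--Begle type argument. Write $f$ for either $\tilde\phi$ or $\phi^+$. Both maps go between compact, triangulable spaces: the target $C_n(\mathfrak{g})^+$ is the one-point compactification of a real algebraic variety, the source of $\phi^+$ is the Thom space of the vector bundle $G/T\times_W\mathfrak{t}^n\to (G/T)/W$, and the source of $\tilde\phi$ is the associated sphere bundle over the compact manifold $(G/T)/W$. Moreover $\phi$ is proper and surjective by Proposition \ref{prop:cartansubalgebra} together with the fact that $\textnormal{Ad}_g$ is an isometry of $\mathfrak{g}$, so $f$ is a proper surjection. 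Granting fibrewise $\Lambda$-acyclicity, the Leray spectral sequence of $f$ with constant coefficients degenerates: by proper base change the stalk of $R^q f_*\underline{\Lambda}$ at a point $y$ is $H^q(f^{-1}(y);\Lambda)$, which equals $\Lambda$ for $q=0$ (the fibres are connected) and vanishes for $q>0$ (the fibres are $\Lambda$-acyclic), so $f^\ast$ is an isomorphism on $H^\ast(-;\Lambda)$; since all spaces are of finite type, the universal coefficient theorem then gives that $f_\ast$ is an isomorphism on $H_\ast(-;\Lambda)$.

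The technical heart is the identification of the fibres. Fix $\underline{X}=(X_1,\dots,X_n)\in C_n(\mathfrak{g})$ and let $L=Z_G(\underline{X})^\circ$ be the identity component of its centraliser, a compact connected subgroup of $G$ of full rank, with Lie algebra $\mathfrak{l}=\mathfrak{z}_{\mathfrak{g}}(\underline{X})$. The key observation is that a Cartan subalgebra of $\mathfrak{g}$ contains $\underline{X}$ if and only if it is a Cartan subalgebra of $\mathfrak{l}$: since $[\mathfrak{l},X_i]=0$, the tuple $\underline{X}$ lies in the centre of $\mathfrak{l}$, hence in every Cartan subalgebra of $\mathfrak{l}$; conversely any Cartan subalgebra of $\mathfrak{g}$ through $\underline{X}$ is contained in $\mathfrak{l}=\mathfrak{z}_{\mathfrak{g}}(\underline{X})$ and so is a Cartan subalgebra of $\mathfrak{l}$. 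Unwinding the definition of $\phi$ and tracking the residual $W$-action on $G/T\times\mathfrak{t}^n$, one then checks that $\phi^{-1}(\underline{X})$ is homeomorphic to $L/N_L(T_L)$, the space of maximal tori of $L$, where $T_L$ is a fixed maximal torus of $L$ (necessarily also a maximal torus of $G$). Since the collapse map in the definition of $\tilde\phi$ is a homeomorphism away from the section at infinity, the fibres of $\tilde\phi$ and of $\phi^+$ over points of $C_n(\mathfrak{g})$ have the same description; the fibre of $\phi^+$ over the point at infinity is a single point, and that of $\tilde\phi$ over it is the section at infinity $(G/T)/W$, i.e.\ the space of maximal tori of $L=G$.

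Finally, $L/N_L(T_L)$ is connected (because $L$ is) and $\Lambda$-acyclic. Indeed $W_L:=N_L(T_L)/T_L$ acts freely on $L/T_L$, so $L/T_L\to L/N_L(T_L)$ is a $|W_L|$-fold covering and $H^\ast(L/N_L(T_L);S)\cong H^\ast(L/T_L;S)^{W_L}$ for any ring $S$ in which $|W_L|$ is invertible; by Borel's theorem $H^\ast(L/T_L;\Q)\cong\Q[W_L]$ as a graded $W_L$-module, whose invariants are $\Q$ concentrated in degree $0$, and since $H^\ast(L/T_L;\Z)$ is finitely generated and free this persists with $\Z[1/|W_L|]$-coefficients. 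As the inclusion $N_L(T_L)\subseteq N_G(T_L)$ induces an injection $W_L\hookrightarrow W$, the order $|W_L|$ divides $|W|$, so every fibre is $\Lambda$-acyclic, which completes the argument. The main obstacle is the fibre computation of the second paragraph: one must verify that passing to the $W$-quotient collapses the a priori positive-dimensional set of those $gT\in G/T$ with $\textnormal{Ad}_g\mathfrak{t}\supseteq\underline{X}$ precisely onto a copy of $L/N_L(T_L)$, which requires some careful bookkeeping with coset representatives. One should also be attentive to the exact hypotheses of the Vietoris--Begle/Leray step, namely local contractibility of all spaces (so that sheaf and singular cohomology agree) and the coefficient ring being a principal ideal domain (for the universal coefficient argument).
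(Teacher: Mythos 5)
Your proof is correct, but it takes a more self-contained route than the paper does. The paper's argument is a two-liner: it records (Corollary~\ref{cor:maxrankisotropy}) that the adjoint $G$-action on $C_n(\mathfrak{g})^+$ has isotropy groups of maximal rank and that $(C_n(\mathfrak{g})^+)^T=(\mathfrak{t}^n)^+$, and then invokes Baird's general theorem (\cite[Theorem~3.3]{Baird}) which says that for such a $G$-space $X$ the natural map $G/T\times_W X^T\to X$ is a $\Z[1/|W|]$-cohomology isomorphism; the statement for $\phi^+$ is then deduced separately by observing that $(G/T)/W$ is $\Z[1/|W|]$-acyclic so collapsing the section at infinity is also a homology isomorphism. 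What you do is, in effect, re-derive Baird's theorem in the case at hand: you identify the fibre of $\phi$ over $\underline X$ with $L/N_L(T_L)$ for $L=Z_G(\underline X)^\circ$, observe that $W_L\hookrightarrow W$ so $|W_L|$ is invertible in $\Lambda$ and hence the fibres are connected and $\Lambda$-acyclic, and then run a Vietoris--Begle/Leray argument. That fibre description is exactly what underlies Baird's proof (cf.\ also the use of \cite[Theorem~2.1]{Baird} in Lemma~\ref{lem:reducedconjugationmap} of this paper). Your version buys a self-contained, unified treatment of $\tilde\phi$ and $\phi^+$ (you just note that the fibre over $\infty$ is $(G/T)/W$ for one and a point for the other), at the cost of having to verify the Vietoris--Begle hypotheses and carry out the coset bookkeeping yourself; the paper's version is shorter precisely because it outsources all of this to the cited theorem.

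One small caution on the Vietoris--Begle/Leray step that you flag yourself: what you actually control are the \emph{fibres} $f^{-1}(y)$, whereas the Leray stalk statement concerns $\varinjlim_{U\ni y}H^q(f^{-1}(U))$. Proper base change (for a proper map of locally compact Hausdorff spaces) identifies these, and all your spaces are compact and semialgebraic (hence triangulable and locally contractible), so sheaf and singular cohomology agree and the argument goes through; but it is worth stating this explicitly, since the naive ``Vietoris--Begle with acyclic fibres'' requires precisely this compactness/properness input.
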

\begin{proof}
By Corollary \ref{cor:maxrankisotropy} the isotropy group of any point of $C_{n}(\mathfrak{g})^+$ under the adjoint action has maximal rank. Moreover, $(C_n(\mathfrak{g})^+)^T=(\mathfrak{t}^n)^+$. Thus, it follows from \cite[Theorem 3.3]{Baird} that $\tilde{\phi}$ induces an isomorphism with coefficients in $\Z[1/|W|]$. Furthermore, the homology of $(G/T)/W$ with $\Z[1/|W|]$-coefficients is trivial, so the map collapsing the section at infinity is a homology isomorphism as well.
\end{proof}

\begin{theorem} \label{thm:char0}
Let $n\geq 1$. Then $C_{n}(\mathfrak{g})^+$ is a $\Z[1/|W|]$-homology sphere of dimension $nr$ if $n$ is even, and of dimension $d+(n-1)r$ if $n$ is odd.
\end{theorem}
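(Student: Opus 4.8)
The plan is to reduce the computation of $\tilde H_*(C_n(\mathfrak{g})^+;\Z[1/|W|])$ to that of the Thom space $(G/T)_+\wedge_W(\mathfrak{t}^n)^+$ via Lemma \ref{lem:actionmapliealgebra}, and then to compute the latter using the transfer associated with the $|W|$-fold covering $G/T\to (G/T)/W$. Write $R=\Z[1/|W|]$. First I would observe that the Thom space $(G/T)_+\wedge_W(\mathfrak{t}^n)^+$ is the Thom space of the $R$-oriented (indeed, after inverting $|W|$ we may ignore orientation subtleties, but in fact $\mathfrak{t}^n$ is $W$-orientable precisely when $n$ is even, and this dichotomy is exactly what produces the two cases) vector bundle $\mathfrak{t}^n\to G/T\times_W\mathfrak{t}^n\to (G/T)/W$ of rank $nr$. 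By the Thom isomorphism — valid with $R$-coefficients once one checks orientability, which I address below — its reduced homology is a shift by $nr$ of $H_*((G/T)/W;R_{\mathrm{tw}})$, with possibly twisted coefficients.

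The key input is that the covering space projection $G/T\to (G/T)/W$ induces, after inverting $|W|$, a split surjection in homology whose image is the $W$-invariants: $H_*((G/T)/W;R)\cong H_*(G/T;R)^W$, and more generally with twisted coefficients $H_*((G/T)/W;R_\chi)\cong (H_*(G/T;R)\otimes R_\chi)^W$ for a character $\chi\co W\to\{\pm1\}$. Now $H_*(G/T;R)$ is a free $R$-module of rank $|W|$ concentrated in even degrees, with Poincaré series $\prod_i \frac{1-t^{2d_i}}{1-t^2}$ where $d_i$ are the degrees of the fundamental invariants; as a $W$-representation over $\Q$ it is the regular representation, so the $W$-invariants form a one-dimensional summand in degree $0$ (and dually in top degree $2(d-r)/2 = d-r$... more precisely $H_*(G/T)$ is Poincaré dual of dimension $d-r$, with $H_0$ and $H_{d-r}$ each the trivial summand). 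The subtlety is the coefficient twist: the Thom isomorphism for $\mathfrak{t}^n\to G/T\times_W\mathfrak{t}^n$ carries the twist by the determinant character of the $W$-action on $\mathfrak{t}^n$, which is $(\det_W\mathfrak{t})^{\otimes n}$; since $\det_W\mathfrak{t}$ is the sign character $\epsilon\co W\to\{\pm1\}$ (the action of $W$ on $\mathfrak{t}$ has determinant equal to the length-parity character), the twist is $\epsilon^n$. Thus $\tilde H_*(C_n(\mathfrak{g})^+;R)\cong \tilde H_{*-nr}\!\big((G/T)/W;R_{\epsilon^n}\big) \cong \big(H_{*-nr}(G/T;R)\otimes R_{\epsilon^n}\big)^W$.

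I would then finish by identifying the $W$-invariants in $H_*(G/T;R)\otimes R_{\epsilon^n}$. When $n$ is even the twist is trivial, and $H_*(G/T;R)^W$ is $R$ in degree $0$ and $R$ in degree $d-r$; but actually one wants the invariants in \emph{all} degrees, and over $\Q$ the regular representation has a unique trivial summand, placed in degree $0$, so the invariants are $R$ in degree $0$ only — wait, this conflicts with Poincaré duality unless $H_{d-r}(G/T)$ is also trivial, which it is, so the invariants are $R$ in degrees $0$ and $d-r$. Hmm, that would give a \emph{two}-cell answer, not a sphere. Let me reconsider: the correct statement is that $H^*(G/T;\Q)$ as a $W$-module has trivial isotypic component equal to $H^*(BT;\Q)^W\otimes(\text{coinvariants})$... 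The clean fact I should invoke instead is Borel's: $H^*((G/T)/W;\Q)=H^*(G/T;\Q)^W\cong H^*(BT;\Q)^W/(\text{positive-degree invariants})=\Q$ concentrated in degree $0$, i.e.\ $(G/T)/W$ is $\Q$-acyclic. Then for $n$ even the Thom iso immediately gives an $R$-homology sphere of dimension $nr$. For $n$ odd the twist $\epsilon$ is nontrivial, and the relevant group is $(H_*(G/T;R)\otimes R_\epsilon)^W = H_*(G/T;R)_\epsilon$, the $\epsilon$-isotypic part; by Poincaré duality on the $\Q$-acyclic... rather, the $\epsilon$-isotypic part of $H^*(G/T;\Q)$ is one-dimensional, sitting in the top degree $d-r$ (it is generated by the product of the positive roots, the Weyl-group-anti-invariant polynomial, or rather its image — the lowest-degree $\epsilon$-isotypic class in $H^*(BT)$ has degree $2N$ where $N=(d-r)/2$ is the number of positive roots, mapping to a generator of $H^{d-r}(G/T)$). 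Hence for $n$ odd, $\tilde H_*(C_n(\mathfrak{g})^+;R)$ is $R$ concentrated in degree $nr+(d-r)=d+(n-1)r$, an $R$-homology sphere of that dimension.

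\textbf{Main obstacle.} The delicate points are (i) pinning down the coefficient twist in the fibrewise Thom isomorphism — correctly identifying $\det_W(\mathfrak{t})$ with the sign/length character of $W$ — and (ii) the computation of the $\epsilon$-isotypic component of $H^*(G/T;\Q)$, for which I would cite the classical description of $H^*(G/T;\Q)\cong \Q[\mathfrak{t}^*]/(\Q[\mathfrak{t}^*]^W_+)$ as a graded $W$-module (the coinvariant algebra), in which each irreducible of $W$ appears with multiplicity equal to its dimension and the sign character appears exactly once, in degree $2N = d-r$. Everything else — the Thom isomorphism with $R$-coefficients, the transfer argument showing $H_*$ of the quotient is the invariants after inverting $|W|$, and Lemma \ref{lem:actionmapliealgebra} identifying $C_n(\mathfrak{g})^+$ with the Thom space in $R$-homology — is routine.
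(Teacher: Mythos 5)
Your proposal is correct and follows essentially the same route as the paper: both reduce to the Thom space $(G/T)_+\wedge_W(\mathfrak{t}^n)^+$ via Lemma \ref{lem:actionmapliealgebra}, and both resolve the computation into two cases depending on the parity of $n$ via the $W$-module structure of $H^\ast(G/T;k)$. The paper phrases the reduction through the K\"unneth isomorphism and then invokes $k^\sigma$-twisted Poincar\'e duality on the closed manifold $(G/T)/W\cong G/N$ (the $W$-action on $G/T$ is free, so $G/N$ is a genuine closed manifold with orientation local system $k^\sigma$), whereas you work with a fibrewise Thom isomorphism with $\epsilon^n$-twisted coefficients and then identify the sign-isotypic component of the coinvariant algebra directly via the discriminant $\Delta=\prod_{\alpha>0}\alpha$; these are two presentations of the same underlying fact.

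One piece of your scratchwork is genuinely wrong and worth flagging, even though you recover from it. You write that the invariants $H_\ast(G/T;R)^W$ are ``$R$ in degrees $0$ and $d-r$'' and that ``$H_{d-r}(G/T)$ is trivial'' as a $W$-module; that second claim is false. The $W$-action on $H^{d-r}(G/T;\Q)$ is by the \emph{sign} character, not trivially -- a simple reflection sends $\Delta\mapsto -\Delta$; equivalently, $W$ acts orientation-reversingly on $G/T$ (in the case $G=SU(2)$ the Weyl involution on $G/T\cong S^2$ is the antipodal map, of degree $-1$). Consequently there is no Poincar\'e duality conflict to resolve: the trivial isotypic component of $H^\ast(G/T;\Q)$ really is one-dimensional and concentrated in degree $0$, so $(G/T)/W$ is $\Q$-acyclic as you eventually assert via Borel. имHad you pushed your incorrect intermediate claim through, the $n$-even case would have produced a two-cell answer rather than a sphere. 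Your final conclusion in both parity cases is correct because you fall back on the right input (Borel acyclicity, respectively the sign-isotypic of the coinvariant algebra being one-dimensional in degree $2N=d-r$).
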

\begin{proof}
Let $k=\Z[1/|W|]$. According to Lemma \ref{lem:actionmapliealgebra} there is an isomorphism
\[
H^\ast(C_{n}(\mathfrak{g})^+;k)\cong H^\ast(G/T\times (\mathfrak{t}^{n})^+;k)^{W}\, .
\]

If $n$ is even, then $W$ acts orientation-preservingly on $(\mathfrak{t}^n)^+$ and thus trivially on $H^\ast((\mathfrak{t}^n)^+;k)$. As an ungraded $W$-module $H^{*}(G/T;k)$ is isomorphic to the group ring $k W$, and thus the trivial representation $H^\ast(G/T;k)^W$ is one-dimensional and must occur in degree $0$. This shows that
\[
H^i(C_n(\mathfrak{g})^+;k)\cong \begin{cases} k & \textnormal{if }i=0,nr \\ 0 & \textnormal{otherwise,} \end{cases}
\]
and hence that $C_n(\mathfrak{g})^+$ is a $k$-homology sphere of dimension $nr$.

On the other hand, if $n$ is odd, then $W$ acts orientation-reversingly on $(\mathfrak{t}^n)^+$, and so $H^\ast((\mathfrak{t}^n)^+;k)$ is the sum of the trivial $W$-module $k$ in degree zero and the sign representation $k^\sigma$ in degree $nr$. Since $|W|$ is invertible in $k$, there is an isomorphism
\[
\Bigl(H^\ast(G/T;k)\otimes k^\sigma \Bigr)^W\cong H^\ast((G/T)/W;k^\sigma)
\]
and, by Poincar{\'e} duality, this is $k$ in degree $d-r$ and zero in all other degrees. Taking into account the fact that $k^\sigma$ is in degree $nr$, it follows that
\[
\Bigl(H^\ast(G/T;k)\otimes H^\ast((\mathfrak{t}^n)^+;k) \Bigr)^W
\]
is $k$ in degrees $0$ and $d+(n-1)r$, and zero otherwise. This finishes the argument when $n$ is odd.
\end{proof}

Since $G$ is compact, we can fix an Ad-invariant inner product on $\mathfrak{g}^n$. Let $S(\mathfrak{g}^n)\subseteq \mathfrak{g}^n$ denote the unit sphere with respect to this inner product. Let
\[
C^1_n(\mathfrak{g}):= C_n(\mathfrak{g})\cap S(\mathfrak{g}^n)\, .
\]
We use the superscript $\lozenge$ to denote the unreduced suspension.

\begin{lemma} \label{lem:suspension}
There is a $G$-equivariant homeomorphism
\[
C_n(\mathfrak{g})^+\cong C_n^1(\mathfrak{g})^{\lozenge}
\]
where $G$ acts trivially on the suspension coordinate.
\end{lemma}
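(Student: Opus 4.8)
The plan is to identify $C_n(\mathfrak{g})$ with the open cone on $C_n^1(\mathfrak{g})$ and then pass to one-point compactifications. The starting point is that $C_n(\mathfrak{g})\subseteq \mathfrak{g}^n$ is a closed cone through the origin: if $(X_1,\dots,X_n)\in C_n(\mathfrak{g})$, then $[tX_i,tX_j]=t^2[X_i,X_j]=0$ for every $t\in\R$, so $C_n(\mathfrak{g})$ is invariant under scalar multiplication, and it is closed because the bracket is continuous. The diagonal adjoint action of $G$ on $\mathfrak{g}^n$ is by linear isometries for the fixed $\textnormal{Ad}$-invariant inner product, so it preserves the unit sphere $S(\mathfrak{g}^n)$ and commutes with scaling; consequently $C_n^1(\mathfrak{g})=C_n(\mathfrak{g})\cap S(\mathfrak{g}^n)$ is a closed and bounded, hence compact, $G$-invariant subspace.

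Next I would consider the radial map
\[
\rho\co C_n^1(\mathfrak{g})\times[0,\infty)\longrightarrow C_n(\mathfrak{g}),\qquad (v,t)\longmapsto tv .
\]
It is a continuous surjection — any nonzero $X\in C_n(\mathfrak{g})$ equals $\|X\|\cdot(X/\|X\|)$ with $X/\|X\|\in C_n^1(\mathfrak{g})$ — it collapses $C_n^1(\mathfrak{g})\times\{0\}$ to $0$, and on the complement of that slice it is a bijection with continuous inverse $X\mapsto(X/\|X\|,\|X\|)$. Hence $\rho$ descends to a homeomorphism from the open cone $C_n^1(\mathfrak{g})\times[0,\infty)/(C_n^1(\mathfrak{g})\times\{0\})$ onto $C_n(\mathfrak{g})$. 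Since $g\cdot(tv)=t(g\cdot v)$, the map $\rho$ is $G$-equivariant when $G$ acts trivially on the $[0,\infty)$-coordinate, so this homeomorphism is $G$-equivariant as well.

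To finish I would extend $\rho$ to $\bar\rho\co C_n^1(\mathfrak{g})\times[0,\infty]\to C_n(\mathfrak{g})^+$ by sending $C_n^1(\mathfrak{g})\times\{\infty\}$ to the point $+$ at infinity. Because $C_n^1(\mathfrak{g})$ is compact, every compact subset of $C_n(\mathfrak{g})$ is contained in a ball $\{\,\|X\|\le N\,\}$, so the sets $\{+\}\cup\rho\bigl(C_n^1(\mathfrak{g})\times(N,\infty)\bigr)$ form a neighbourhood basis of $+$; this shows $\bar\rho$ is continuous (continuity away from $C_n^1(\mathfrak{g})\times\{\infty\}$ being clear from $\rho$). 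The map $\bar\rho$ is a surjection that identifies precisely $C_n^1(\mathfrak{g})\times\{0\}$ to $0$ and $C_n^1(\mathfrak{g})\times\{\infty\}$ to $+$, so it factors through a continuous bijection $C_n^1(\mathfrak{g})^{\lozenge}\to C_n(\mathfrak{g})^+$ from the unreduced suspension. Its source is compact and its target is Hausdorff, being a one-point compactification of the locally compact Hausdorff space $C_n(\mathfrak{g})$, so this continuous bijection is a homeomorphism; and it is $G$-equivariant with $G$ acting trivially on $[0,\infty]$, hence on the suspension coordinate, which is exactly the assertion. The whole argument is elementary point-set topology, and the only step requiring any care is the continuity of $\bar\rho$ at the added point, which is precisely where compactness of $C_n^1(\mathfrak{g})$ — that is, $\textnormal{Ad}$-invariance of the inner product, making the unit sphere $G$-invariant and compact — is used.
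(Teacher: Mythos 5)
Your proof is correct and takes essentially the same approach as the paper: both exploit that $C_n(\mathfrak{g})$ is a cone (invariant under rescaling) and use the radial map to identify its one-point compactification with the unreduced suspension of the link $C_n^1(\mathfrak{g})$. The paper's version is terser — it fixes a homeomorphism $[0,1]\cong\mathbb{R}_{\geq 0}^+$ to define $S(\mathfrak{g}^n)^{\lozenge}\cong(\mathfrak{g}^n)^+$ globally and then restricts — but this is the same argument; your more detailed treatment of continuity at the point at infinity and the compact-to-Hausdorff step just makes explicit what the paper leaves implicit.
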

\begin{proof}
Fix a homeomorphism $\gamma \co [0,1]\stackrel{\cong}{\to} \R_{\geq 0}^+$ such that $\gamma(0)=0$ and $\gamma(1)=+$. Using this define a $G$-equivariant homeomorphism

\[
S(\mathfrak{g}^n)^{\lozenge}  \stackrel{\cong}{\to} (\mathfrak{g}^n)^+\,,\quad  (t,X)  \mapsto \begin{cases} \gamma(t)X & \textnormal{if } t<1 \\ + & \textnormal{if } t=1. \end{cases}
\]
Since rescaling preserves commutativity, this restricts to a $G$-equivariant homeomorphism $C_n^1(\mathfrak{g})^{\lozenge}\cong C_n(\mathfrak{g})^+$.
\end{proof}

Let us view $S(\mathfrak{t}^n)\subseteq (\mathfrak{t}^n)^+$ as the equatorial sphere. The map $\tilde{\phi} \co G/T\times_W (\mathfrak{t}^n)^+\to C_n(\mathfrak{g})^+$ then restricts to a map
\begin{equation} \label{eq:conjugationmapunitsphere}
\phi'\co G/T\times_W S(\mathfrak{t}^n) \to C_n^1(\mathfrak{g})\, .
\end{equation}

\begin{proposition} \label{prop:simplyconnected}
For every $n\geq 1$ the space $C_n(\mathfrak{g})^+$ is simply-connected, unless $n=1$ and $\dim(\mathfrak{g})=1$ in which case $C_n(\mathfrak{g})^+\cong S^1$.
\end{proposition}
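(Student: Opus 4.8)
The plan is to separate the cases $n=1$ and $n\geq 2$. When $n=1$ the claim is immediate: $C_1(\mathfrak{g})=\mathfrak{g}$, so $C_1(\mathfrak{g})^+$ is the one-point compactification of a Euclidean space, hence $C_1(\mathfrak{g})^+\cong S^{\dim\mathfrak{g}}$. Since $\dim\mathfrak{g}\geq r\geq 1$, this sphere is simply-connected exactly when $\dim\mathfrak{g}\geq 2$, and it equals $S^1$ precisely when $\dim\mathfrak{g}=1$, that is when $G\cong S^1$; this is the asserted exception.

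For $n\geq 2$ I would work through the suspension description of Lemma \ref{lem:suspension}, namely $C_n(\mathfrak{g})^+\cong C_n^1(\mathfrak{g})^{\lozenge}$. The key general fact is that the unreduced suspension $X^{\lozenge}$ of any nonempty path-connected space $X$ is simply-connected: cover $X^{\lozenge}$ by its two open cones, each of which is contractible, with intersection deformation retracting onto $X$ and hence path-connected, and apply van Kampen's theorem. Thus it suffices to verify that $C_n^1(\mathfrak{g})$ is nonempty and path-connected.

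Nonemptiness is clear, since $C_n^1(\mathfrak{g})$ contains the unit sphere $S(\mathfrak{t}^n)$ of the subspace $\mathfrak{t}^n\subseteq C_n(\mathfrak{g})$, which is positive-dimensional as $nr\geq 2$. For path-connectedness I would use that the map $\phi'\colon G/T\times_W S(\mathfrak{t}^n)\to C_n^1(\mathfrak{g})$ of (\ref{eq:conjugationmapunitsphere}) is surjective; this is a restatement of Proposition \ref{prop:cartansubalgebra}, since any $\underline{X}\in C_n^1(\mathfrak{g})$ lies in a Cartan subalgebra $\operatorname{Ad}_g(\mathfrak{t})$ and therefore equals $\phi'[gT,\operatorname{Ad}_{g^{-1}}\underline{X}]$. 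Now $S(\mathfrak{t}^n)=S^{nr-1}$ with $nr-1\geq n-1\geq 1$ is path-connected, and $G/T$ is path-connected because $G$ is connected; hence $G/T\times S(\mathfrak{t}^n)$ is path-connected, and so are its continuous images $G/T\times_W S(\mathfrak{t}^n)$ and $C_n^1(\mathfrak{g})$. Combining the last two paragraphs shows that $C_n(\mathfrak{g})^+$ is simply-connected whenever $n\geq 2$.

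I do not anticipate a serious obstacle; the only delicate point is pinning down the exceptional case, which is precisely why I would treat $n=1$ by hand instead of seeking a uniform argument. Indeed, for $n=1$ and $\dim\mathfrak{g}=1$ the space $C_1^1(\mathfrak{g})$ is $S^0$ --- the Weyl group is trivial and so cannot interchange its two points --- and its unreduced suspension is $S^1$ rather than being simply-connected, so no argument phrased purely in terms of $C_n^1(\mathfrak{g})^{\lozenge}$ can avoid this case distinction.
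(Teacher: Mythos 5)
Your proof is correct and takes essentially the same route as the paper: treat $n=1$ directly via $C_1(\mathfrak{g})^+\cong S^{\dim\mathfrak{g}}$, and for $n\geq 2$ use the identification of $C_n(\mathfrak{g})^+$ as the unreduced suspension of $C_n^1(\mathfrak{g})$ together with surjectivity of $\phi'$ (from Proposition \ref{prop:cartansubalgebra}) to show $C_n^1(\mathfrak{g})$ is path-connected. You just spell out the van Kampen step and the surjectivity check more explicitly than the paper does.
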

\begin{proof}
First assume $n=1$. Then $C_n(\mathfrak{g})^+\cong \mathfrak{g}^+\cong S^{\dim(\mathfrak{g})}$ which is simply-connected, unless $\dim(\mathfrak{g})=1$. Now assume $n\geq 2$. By Proposition \ref{prop:cartansubalgebra} the map $\phi'$ is surjective and hence, since $S(\mathfrak{t}^n)$ is path-connected, $C^1_n(\mathfrak{g})$ is path-connected as well. Since $C_n(\mathfrak{g})^+$ is the suspension of $C^1_n(\mathfrak{g})$, it is thus simply-connected.
\end{proof}

Next we describe, for the particular case of commuting pairs, the quotient of $C_{2}(\mathfrak{g})^+$ by the adjoint action of $G$.

\begin{proposition} \label{prop:chevalley}
Let $G$ be a compact connected Lie group of rank $r$. Then there is a homeomorphism $C_2(\mathfrak{g})^+/G\cong S^{2r}$.
\end{proposition}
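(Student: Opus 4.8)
The plan is to reduce the statement to the classical description of the adjoint quotient via the Chevalley restriction theorem. By Lemma \ref{lem:actionmapliealgebra}, the map $\phi^+\co (G/T)_+\wedge_W (\mathfrak{t}^2)^+\to C_2(\mathfrak{g})^+$ is surjective (this already follows from Proposition \ref{prop:cartansubalgebra}), so passing to $G$-orbits gives a continuous surjection $(\mathfrak{t}^2)^+/W\to C_2(\mathfrak{g})^+/G$. I would first check this is in fact a homeomorphism: the source and target are compact Hausdorff, so it suffices to check injectivity, i.e. that two commuting pairs $(X_1,X_2),(Y_1,Y_2)\in \mathfrak{t}^2$ are $G$-conjugate if and only if they are $W$-conjugate. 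The ``if'' direction is clear since $W=N_G(T)/T$ acts through $G$. For ``only if'', if $g\in G$ carries $(X_1,X_2)$ to $(Y_1,Y_2)$, then since all $X_i,Y_i$ lie in $\mathfrak{t}$, a standard argument (both $\mathfrak{t}$ and $\mathrm{Ad}_g\mathfrak{t}$ are Cartan subalgebras of the centralizer of the $Y_i$, hence conjugate by an element of that centralizer) lets us adjust $g$ so that it normalizes $T$, giving the required element of $W$. Thus $C_2(\mathfrak{g})^+/G\cong (\mathfrak{t}^2)^+/W$.

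The next step is to identify $(\mathfrak{t}^2)^+/W$ with a sphere. Since $\mathfrak{t}^2\cong \mathfrak{t}\oplus\mathfrak{t}\cong \R^{2r}$ as a $W$-representation, and one-point compactification commutes with passing to $W$-quotients, we have $(\mathfrak{t}^2)^+/W\cong (\mathfrak{t}^2/W)^+$. So it is enough to show that the quotient $\R^{2r}/W$, for the diagonal action of a finite reflection group $W$ on $\mathfrak{t}\oplus\mathfrak{t}$, is homeomorphic to $\R^{2r}$; then its one-point compactification is $S^{2r}$. Here the key input is the Chevalley–Shephard–Todd theorem applied to the \emph{two copies}: the ring of $W$-invariant polynomials on $\mathfrak{t}\oplus\mathfrak{t}$ is \emph{not} a polynomial ring in general (for $r\geq 2$), so one cannot argue purely algebraically. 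Instead I would invoke the topological fact that for a finite group generated by reflections acting \emph{on one copy} $V=\mathfrak{t}$, the quotient $V/W$ is homeomorphic to $V$; but for the diagonal action on $V\oplus V$ this fails and one needs a genuinely different argument.

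The cleanest route is to use Poincaré duality / the structure of the quotient as a cone. Concretely: $\mathfrak{t}^2/W$ is contractible (it is the continuous image of the contractible cone $\mathfrak{t}^2$, and the quotient of a cone by a group action fixing the cone point is again a cone on $S(\mathfrak{t}^2)/W$), so $(\mathfrak{t}^2/W)^+$ is the unreduced suspension of $S(\mathfrak{t}^2)/W$. By Theorem \ref{thm:char0} with $n=2$ (which gives that $C_2(\mathfrak{g})^+$, and hence its $\Z[1/|W|]$-homology-equivalent model $(\mathfrak{t}^2)^+/W$, is a $\Z[1/|W|]$-homology sphere of dimension $2r$) together with simple-connectivity from Proposition \ref{prop:simplyconnected}, one sees that $S(\mathfrak{t}^2)/W$ is a simply-connected $\Z[1/|W|]$-homology $(2r-1)$-sphere — but to conclude it is actually homeomorphic to $S^{2r-1}$ one must rule out torsion and show it is a genuine sphere, which needs more than the appendix's homological results. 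I therefore expect the real argument to be: invoke a known structural result that $S(\mathfrak{t}\oplus\mathfrak{t})/W$ is a sphere (this is classical for Weyl groups — the quotient of the unit sphere by the diagonal action of a Weyl group on two copies of the reflection representation is a topological sphere, cf. work on configuration spaces and the topology of $\mathfrak{t}^2/W$), or alternatively to use that $C_2^1(\mathfrak{g})$ itself is known to be a sphere. The \textbf{main obstacle} is precisely this last identification: passing from ``$\Z[1/|W|]$-homology sphere'' to ``honest topological sphere,'' which cannot be extracted from Theorem \ref{thm:char0} alone and will require either an explicit geometric model for $\mathfrak{t}^2/W$ or an appeal to the classification/recognition of low-dimensional quotient spaces.
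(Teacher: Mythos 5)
There is a genuine gap, and it occurs exactly where you throw up your hands. Your reduction to showing $(\mathfrak{t}^2)^+/W\cong S^{2r}$ is correct and matches the paper, and you are also right that one cannot hope to get an actual homeomorphism out of the homological statement of Theorem~\ref{thm:char0}. But your reason for abandoning the algebraic route is based on looking at the wrong invariant ring.

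The observation you missed is that $\mathfrak{t}^2=\mathfrak{t}\oplus\mathfrak{t}$ is $W$-equivariantly isomorphic to the \emph{complexification} $\mathfrak{t}\otimes_{\R}\C$, viewed as a complex vector space of dimension $r$. A real reflection $s\in W$ acting on $\mathfrak{t}\oplus\mathfrak{t}$ has fixed-point set of real codimension two, so as you say $W$ is not a reflection group on the real $2r$-dimensional space and the real invariant ring is not polynomial; but on $\mathfrak{t}\otimes\C$ the same $s$ has fixed-point set of \emph{complex} codimension one, so $W$ acts as a complex reflection group. Chevalley's theorem therefore \emph{does} apply to the $\C$-algebra of $W$-invariant polynomial functions on $\mathfrak{t}\otimes\C$: it is a polynomial ring on $r$ homogeneous generators $f_1,\dots,f_r$, and the resulting map $(f_1,\dots,f_r)\co \mathfrak{t}\otimes\C\to\C^r$ induces a homeomorphism $(\mathfrak{t}\otimes\C)/W\cong\C^r$ (this is \cite[Prop.~9.3]{LT}). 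One-point compactifying gives $(\mathfrak{t}^2)^+/W\cong (\C^r)^+\cong S^{2r}$, which is exactly what the paper does. Note also that this argument is visibly special to $n=2$: for $n\geq 3$ there is no analogous complex (or quaternionic) structure making $W$ a reflection group on $\mathfrak{t}^n$, so $C_n(\mathfrak{g})^+/G$ has no reason to be a sphere, and indeed $\overline{\textnormal{SP}}^{p}(S^n)$ is not one.
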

\begin{proof}
In Proposition \ref{prop:cartansubalgebra} we showed that every $(X_1,X_2)\in C_2(\mathfrak{g})$ is contained in a Cartan subalgebra $\mathfrak{t}\subseteq \mathfrak{g}$. Therefore, the quotient $C_2(\mathfrak{g})^+/G$ may be identified with the quotient of $(\mathfrak{t}^2)^+$ by the diagonal action of the Weyl group. We can identify the $W$-module $\mathfrak{t}^{2}$ with $\mathfrak{t}\otimes {\C}$, the complexification of $\mathfrak{t}$. By Chevalley's theorem the ring of $W$-invariant polynomial functions on $\mathfrak{t}\otimes \C$ is a polynomial algebra generated by $r$ homogeneous elements. These homogeneous elements provide a system of global coordinates defining a homeomorphism of $(\mathfrak{t}\otimes \C)/W$ with $\C^{r}$ (cf. \cite[Prop. 9.3]{LT}), and hence a homeomorphism of $(\mathfrak{t}\otimes \C)^+/W$ with $(\C^{r})^+\cong S^{2r}$.
\end{proof}

\begin{remark} \label{rem:degree}
The composition of the inclusion $(\mathfrak{t}^2)^+\to C_2(\mathfrak{g})^+$ with the projection $C_2(\mathfrak{g})^+\to C_2(\mathfrak{g})^+/G$ may be identified with the projection $(\mathfrak{t}^2)^+\to (\mathfrak{t}^2)^+/W$. By computing local degrees one sees that this map has degree $|W|$.
\end{remark}

Finally, using the classification of compact Lie algebras, it follows that
\[
\g\cong \z\oplus \g_{1}\oplus\cdots \oplus \g_{s}\,,
\] 
where $\z$ denotes the center of $\g$ and $\g_{1},\dots \g_{s}$ are simple Lie algebras. From here we get the following statement.
\begin{proposition} \label{prop:centresuspension}
For all $n\geq 0$, $C_{n}(\g)^{+}\cong \Sigma^{n\dim(\z)}\left(C_{n}(\g_{1})^{+}\wedge \cdots \wedge C_{n}(\g_{s})^{+} \right)$ where the $\mathfrak{g}_i$ are the simple factors of $\mathfrak{g}$.
\end{proposition}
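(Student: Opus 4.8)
The plan is to reduce everything to the elementary fact that commutativity in a direct sum of Lie algebras is tested summand by summand. First I would emphasise that the decomposition $\g\cong\z\oplus\g_{1}\oplus\cdots\oplus\g_{s}$ is one of Lie algebras, so that the bracket of $\g$ is computed componentwise along this decomposition and, in particular, $[\z,\g]=0$. Splitting an $n$-tuple $\underline{X}\in\g^{n}$ along $\g^{n}\cong\z^{n}\oplus\g_{1}^{n}\oplus\cdots\oplus\g_{s}^{n}$, the tuple $\underline{X}$ lies in $C_{n}(\g)$ precisely when each of its $s+1$ components lies in the corresponding commuting variety; and since $\z$ is abelian the $\z^{n}$-component is subject to no condition. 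This gives a homeomorphism
\[
C_{n}(\g)\;\cong\;\z^{n}\times C_{n}(\g_{1})\times\cdots\times C_{n}(\g_{s})
\]
of closed subspaces of $\mathfrak{g}^{n}\cong\R^{nd}$. One checks readily that it is natural for inclusions of subalgebras respecting the decomposition (in particular for a maximal torus $T$ with $\mathfrak{t}\cong\z\oplus\mathfrak{t}_{1}\oplus\cdots\oplus\mathfrak{t}_{s}$), and that it is $G$-equivariant for the adjoint action with $G$ acting trivially on $\z^{n}$, the connected group $G$ fixing each of the ideals $\z,\g_{1},\dots,\g_{s}$.

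Next I would pass to one-point compactifications. Every space occurring here is a closed subset of a Euclidean space, hence locally compact Hausdorff, so the canonical comparison map $(X\times Y)^{+}\to X^{+}\wedge Y^{+}$ is a homeomorphism; applying this $s$ times turns the product above into
\[
C_{n}(\g)^{+}\;\cong\;(\z^{n})^{+}\wedge C_{n}(\g_{1})^{+}\wedge\cdots\wedge C_{n}(\g_{s})^{+}\,.
\]
Finally $\z^{n}\cong\R^{n\dim(\z)}$, so $(\z^{n})^{+}\cong S^{n\dim(\z)}$, and smashing with this sphere is the $n\dim(\z)$-fold reduced suspension, which yields the asserted formula. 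The degenerate cases need no extra work: when $\g$ is abelian ($s=0$) the empty smash product is $S^{0}$ and the identity reads $C_{n}(\g)^{+}=(\R^{n\dim(\z)})^{+}=S^{n\dim(\z)}$, while for $n=0$ both sides collapse to $S^{0}$.

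I do not anticipate any genuine difficulty. The two points that deserve a sentence of care are the componentwise description of commuting tuples, which is immediate because the bracket on $\g$ is diagonal with respect to the decomposition into $\z$ and the simple ideals with the $\z$-slot always zero; and the compatibility of one-point compactification with finite products, which is legitimate here only because all of the spaces involved are locally compact Hausdorff. Both are standard.
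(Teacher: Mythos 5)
Your proposal is correct and follows essentially the same argument as the paper: decompose $C_n(\mathfrak{g})$ as a product over the direct-sum decomposition of the Lie algebra, use that $\mathfrak{z}$ is abelian so its factor is $\mathfrak{z}^n$, and pass to one-point compactifications to turn the product into a smash and the $\mathfrak{z}^n$-factor into a suspension. The extra care you take about local compactness and degenerate cases is sound but not more than the paper implicitly assumes.
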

\begin{proof}
By definition of the Lie bracket on a direct sum of Lie algebras we have that
\[
C_n(\mathfrak{g})\cong C_n(\mathfrak{z})\times C_n(\mathfrak{g}_1)\times \cdots \times C_n(\mathfrak{g}_s)\, .
\]
Since $\mathfrak{z}$ is abelian, we further have $C_n(\mathfrak{z})=\mathfrak{z}^n$, and the statement follows.
\end{proof}

\begin{remark}
We clarify the comment made in the introduction that $C_n(\mathfrak{g})$ is a possibly infinite subspace arrangement. The space $C_n(\mathfrak{g})\subseteq \mathfrak{g}^n$ can be written, tautologically, as a union of lines in $\mathfrak{g}^n$. However, unless $\mathfrak{g}$ is abelian, it cannot be written as a union of \emph{finitely} many linear subspaces of $\mathfrak{g}^n$. For example, $C_2(\mathfrak{su}_2)\subseteq \R^6$ is the determinantal variety of real $2\times 3$ matrices of rank less than two. Elementary linear algebra shows that a linear subspace $V\subseteq \R^6$ which is contained in $C_2(\mathfrak{su}_2)$ can have dimension at most three. However, with the singular point deleted, $C_2(\mathfrak{su}_2)$ is a $4$-dimensional manifold, namely the complement of the zero section in a rank two vector bundle over $\R P^2$ (see Example \ref{ex:su2}). More generally, since every commuting $n$-tuple in $\mathfrak{g}$ lies in a Cartan subalgebra of $\mathfrak{g}$, an argument like the one in \cite[Corollary 2.5]{Richardson} shows that $C_n(\mathfrak{g})$ is an irreducible real algebraic variety, and thus it cannot be written as a union of finitely many proper linear subspaces of $\mathfrak{g}^n$.
\end{remark}

\subsection{Mod-$p$ cohomology of $C_n(\mathfrak{u}_p)^+$ for $n>2$ even} \label{sec:cohomologycnup}

Let $p$ be a prime. To lighten notation we will omit the coefficient ring and all cohomology groups in this section are understood with $\F_p$-coefficients.  For a space $X$ with mod-$p$ cohomology of finite type, define the mod-$p$ Poincar{\'e} series by
\[
\Pi_X^p(t)=\sum_{k\geq 0} \dim_{\F_p}H^k(X)\, t^k\, .
\]
The objective of this section is to prove the following theorem.

\begin{theorem} \label{thm:commutingvarietymodp}
For every prime $p$ and every even integer $n\geq 4$
\[
\Pi_{C_{n}(\mathfrak{u}_p)^+}^p(t)= 1+t^{np}+(t^{n+1}+t^{np})(t^{2p-3}+t^{2p-2})\prod_{k=1}^{p-2}(1+t^{2k-1})\, .
\]
\end{theorem}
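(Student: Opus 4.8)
The plan is to transfer the computation to the ``unit-sphere commuting variety'' and then split it into a regular and a singular part, the singular part being governed by bundles over flag manifolds with prime-to-$p$ structure groups and the regular part by a sphere bundle over a flag manifold modulo its Weyl group. Since $\mathfrak{u}_p=\mathfrak{z}\oplus\mathfrak{su}_p$ with $\mathfrak{z}$ one--dimensional, Proposition \ref{prop:centresuspension} gives $C_n(\mathfrak{u}_p)^+\cong\Sigma^n C_n(\mathfrak{su}_p)^+$, and Lemma \ref{lem:suspension} gives $C_n(\mathfrak{su}_p)^+\cong\Sigma C_n^1(\mathfrak{su}_p)$, so $C_n(\mathfrak{u}_p)^+\cong\Sigma^{n+1}C_n^1(\mathfrak{su}_p)$ and the theorem is equivalent to the assertion that, for $n\geq4$ even,
\[
\Pi^p_{C_n^1(\mathfrak{su}_p)}(t)=\Bigl(1+(t^{2p-3}+t^{2p-2})\prod_{k=1}^{p-2}(1+t^{2k-1})\Bigr)\bigl(1+t^{n(p-1)-1}\bigr).
\]
Writing $S\subseteq SU(p)$ for a maximal torus with Lie algebra $\mathfrak{s}$, one has $\dim S(\mathfrak{s}^n)=n(p-1)-1$, and by Theorem \ref{thm:char0} (using $n$ even) $C_n^1(\mathfrak{su}_p)$ is a $\Z[1/p!]$-homology sphere of that dimension; thus the two corner summands $1$ and $t^{n(p-1)-1}$ are forced, the remaining classes are $p$-torsion--related, and the product form of the answer already suggests that a mod-$p$ homology sphere $S^{n(p-1)-1}$ splits off.

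I would then stratify $C_n^1(\mathfrak{su}_p)$ by the partition $\lambda\vdash p$ recording the common eigenvalue multiplicities --- well defined since, by Proposition \ref{prop:cartansubalgebra}, every commuting tuple is simultaneously diagonalisable and is scalar on each joint eigenspace. The open top stratum $\lambda=(1^p)$ is $SU(p)/S\times_{\Sigma_p}S(\mathfrak{s}^n)^{\mathrm{free}}$ (the locus of trivial $\Sigma_p$--stabiliser), on which the map $\phi'$ of (\ref{eq:conjugationmapunitsphere}) is a homeomorphism, and the complementary closed singular locus is a finite union of strata, each a bundle over a partial flag manifold $SU(p)/L_\lambda$ whose fibre is a compactified configuration space of $\ell(\lambda)<p$ points of $\R^n$ and whose structure groups --- the relative Weyl group $N(L_\lambda)/L_\lambda$ and the symmetric groups permuting coincident points --- all have order prime to $p$. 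Hence the mod-$p$ cohomology of each such stratum is controlled by the bundle structure in the naive way, the deepest stratum $\lambda=(p-1,1)$ contributing $\C P^{p-1}\times S^{n-1}$, and the whole singular contribution is computed by an induction using this stratification. The cofibre sequence
\[
\bigl(C_n^1(\mathfrak{su}_p)^{\mathrm{sing}}\bigr)_+\longrightarrow C_n^1(\mathfrak{su}_p)_+\longrightarrow (SU(p)/S)_+\wedge_{\Sigma_p}\bigl(S(\mathfrak{s}^n)/S(\mathfrak{s}^n)^{\mathrm{sing}}\bigr)
\]
then reduces the theorem to computing the mod-$p$ cohomology of the last term together with the connecting homomorphism. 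For the last term one runs the Serre spectral sequence of the fibration over $SU(p)/N(S)=(SU(p)/S)/\Sigma_p$ with fibre $S(\mathfrak{s}^n)/S(\mathfrak{s}^n)^{\mathrm{sing}}$; the inputs are the $\F_p$-cohomology of the flag manifold $SU(p)/S$ as a $\Sigma_p$-module (equivalently the equivariant cohomology $H^*_{\Sigma_p}(SU(p)/S)$) and the Euler class of the bundle associated to $\mathfrak{s}^n=\mathfrak{s}^{\oplus n}$, which is the $n$-th power of the one associated to $\mathfrak{s}$ and is governed by the non-vanishing result of Lemma \ref{lem:eulerclass}.

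The main obstacle is this last computation --- determining $H^*_{\Sigma_p}(SU(p)/S;\F_p)$ and the relevant equivariant fibre cohomology, and then proving that every differential in the resulting spectral sequence and in the filtration spectral sequence vanishes in the range where it could destroy the expected classes. This is where the hypotheses are used. That $n$ is even makes all of the vector bundles built from the $\Sigma$-representations $\mathfrak{t}^n$ and $\mathfrak{s}^n$ mod-$p$ orientable (the sign character $\det\mathfrak{s}=\mathrm{sign}$ enters only to the $n$-th power), so that Thom isomorphisms apply and the answer comes out as the untwisted product displayed above, with the $\Z[1/p!]$-homology sphere $S^{n(p-1)-1}$ splitting off. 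That $n\geq4$ forces the ``low'' block of classes (in degrees from $2p-3$ to $2p-2+(p-2)^2$) and the ``high'' block (its translate by $n(p-1)-1$) to lie in disjoint degree ranges, so that no extension, cancellation, or filtration differential can connect them; for $n=2$ these ranges overlap and the formula genuinely fails, as already seen for $p=2$ in Example \ref{ex:su2}. In the assembled answer the factor $(t^{2p-3}+t^{2p-2})$ appears as a Bockstein pair produced by a Gysin sequence whose Euler class is divisible by $p$, while $\prod_{k=1}^{p-2}(1+t^{2k-1})$ is the Poincar\'e series of the flag-manifold directions that survive in $H^*_{\Sigma_p}(SU(p)/S)$.
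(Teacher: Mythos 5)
Your overall reduction to $C_n^1(\mathfrak{su}_p)$ via Lemma \ref{lem:suspension} and Proposition \ref{prop:centresuspension} matches the paper, and you correctly identify the sphere bundle over $U(p)/N$ as the central object. However, your proposal diverges from the paper's argument in two places, and both divergences contain genuine errors.

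First, the mechanism you give for the answer factoring as a product is a degree-disjointness claim: that for $n\geq 4$ the ``low block'' (degrees $2p-3$ to $2p-2+(p-2)^2$) and its translate by $n(p-1)-1$ occupy disjoint ranges. This is false once $p$ is large. Take $p=7$, $n=4$: the low block sits in degrees $[11,37]$ while the translated block sits in degrees $[34,60]$, so they overlap in degrees $34$--$37$; more generally the required inequality $(p-2)^2+2<n(p-1)$ fails at $n=4$ for all $p\geq 7$. So range separation cannot be what forces the splitting, and the lemma you invoke, Lemma \ref{lem:eulerclass}, actually says the opposite of what you attribute to it: the Euler class $\chi_n=\chi_2^{n/2}$ \emph{vanishes} for $n>2$ because $\gamma_\emptyset^2=0$ in $H^*(U(p)/N;\F_p)$. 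It is precisely this vanishing, not a non-vanishing and not a degree count, that collapses the Gysin sequence of the sphere bundle into split short exact sequences and gives the product formula, uniformly in $p$ for all even $n\geq 4$.

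Second, you bypass Lemma \ref{lem:reducedconjugationmap} entirely. The paper applies the Vietoris--Begle theorem to show $\phi'\co SU(p)/S\times_{\Sigma_p} S(\mathfrak{s}^n)\to C_n^1(\mathfrak{su}_p)$ is a mod-$p$ cohomology isomorphism outright (the point-fibres are quotients $K/N_K(S)$ with Weyl group of order prime to $p$). You instead propose an inductive stratification by joint-eigenspace multiplicity, separating a free open stratum from a singular closed locus and running two spectral sequences to compute and reassemble. In principle such a stratification can be made to work, but as written you have not supplied the inductive computation of the singular stratum's cohomology, the identification of the connecting maps, nor any control on multiplicative extensions in the filtration spectral sequence; and the device you were counting on to kill those extensions --- disjoint degree ranges --- does not hold. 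The Vietoris--Begle step is a real shortcut: it renders your entire stratification argument unnecessary and, once combined with $\chi_n=0$ for $n>2$, the Poincar\'e series of $U(p)/N$ from \cite{GJ} plugs directly into the Gysin sequence to give the stated answer.

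To repair your write-up you should replace the disjoint-ranges reasoning with the observation that $\chi_2$ is a unit multiple of $\gamma_\emptyset$ and $\gamma_\emptyset^2=0$, hence $\chi_n=0$ for $n\geq 4$ even; and you should either carry out the inductive stratification argument in full with an explicit handle on the extension problems, or else invoke Lemma \ref{lem:reducedconjugationmap} and work directly with the single sphere bundle.
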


Our computation is based on a recent computation due to Guerra and Jana \cite{GJ} of the mod-$p$ cohomology of the unordered complete flag manifold in $\C^p$ which we restate here for convenience. Let $T\subseteq U(p)$ be a maximal torus and $N$ its normaliser.
\begin{theorem}[{\cite[Theorem 6.8]{GJ}}] \label{thm:flagvariety} If $p$ is an odd prime, there is an isomorphism of graded $\F_p$-vectorspaces
\[
H^\ast(U(p)/N;\F_p)\cong  \F_p\{1,\alpha_S,\gamma_S\,|\, S\subset \{1,\dots,p-2\}\}\,,
\]
where $S$ is allowed to be empty, and the degrees of the generators are
\[
|\alpha_S|=2p-3+\sum_{i\in S} (2i-1)\,,\quad |\gamma_S|=2p-2+\sum_{i\in S} (2i-1)\, .
\]
\end{theorem}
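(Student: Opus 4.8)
The plan is to realise the unordered flag manifold as a free Weyl‑group quotient of the ordinary flag manifold and to compute with the resulting Cartan--Leray spectral sequence, after first reducing everything modulo $p$ to a Sylow $p$‑subgroup. Write $N=N_{U(p)}(T)$ for the normaliser of the maximal torus; it is the monomial subgroup $S^1\wr\Sigma_p$, with $N/T=\Sigma_p$ acting on $U(p)/T$ by right translation. This action is \emph{free}, so $U(p)/N=(U(p)/T)/\Sigma_p$ is a closed manifold of dimension $p^2-p$ and is a model for the Borel construction $(U(p)/T)_{h\Sigma_p}$; hence there is a spectral sequence
\[
E_2^{s,t}=H^s\bigl(\Sigma_p;H^t(U(p)/T;\F_p)\bigr)\ \Longrightarrow\ H^{s+t}(U(p)/N;\F_p).
\]
Since $p$ is odd, the Sylow $p$‑subgroup $P=\Z/p=\langle\sigma\rangle$ generated by a $p$‑cycle is a trivial‑intersection subgroup with $N_{\Sigma_p}(P)=P\rtimes\Z/(p-1)$. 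The transfer therefore splits the restriction $H^{*}(U(p)/N;\F_p)\hookrightarrow H^{*}\bigl((U(p)/T)/P;\F_p\bigr)$ onto the stable elements, and for a TI Sylow subgroup the stability condition collapses, in positive degrees, to invariance under $\Z/(p-1)$. Thus it suffices to compute $H^{*}\bigl((U(p)/T)/P;\F_p\bigr)=H^{*}\bigl((U(p)/T)_{h\Z/p};\F_p\bigr)$ together with its $\Z/(p-1)$‑action, via the $\Z/p$‑version of the above spectral sequence, whose base contributes $H^{*}(B\Z/p;\F_p)=\F_p[y]\otimes\Lambda(z)$.

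The input I need is the $\F_p[\Z/p]$‑module structure of the coinvariant algebra
\[
R=\F_p[x_1,\dots,x_p]/(e_1,\dots,e_p)\cong H^{*}(U(p)/T;\F_p),
\]
where $\sigma$ cyclically permutes the $x_i$. Over $\Q$ this is the regular representation of $\Sigma_p$, so $R|_{\Z/p}$ is free of rank $(p-1)!$; modulo $p$ it decomposes as a sum of Jordan blocks $J_k=\F_p[\Z/p]/(\sigma-1)^k$ ($1\le k\le p$), and the first task is to pin down the multiplicities of the $J_k$ and, crucially, their distribution across the internal degrees of $R$. I would attack this either by a direct analysis of the $\sigma$‑action on the Schubert (monomial) basis of $R$, or by reducing modulo $p$ the known degree‑by‑degree $\Sigma_p$‑module (``fake degree'') decomposition of $R$. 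Once this is known, $E_2$ follows at once: for $1\le k\le p-1$ one has $\widehat H^{*}(\Z/p;J_k)\cong\F_p$ in every degree (the norm acts as $(\sigma-1)^{p-1}=0$ on such a block, and Tate cohomology of $\Z/p$ is $2$‑periodic), while $\widehat H^{*}(\Z/p;J_p)=0$. Hence each non‑free block placed in internal degree $d$ contributes one $\F_p[y]\otimes\Lambda(z)$‑tower to $E_2$ starting in total degree $d$, and the free blocks contribute nothing in positive filtration.

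It remains to run the spectral sequence. It is a spectral sequence of modules over $H^{*}(B\Z/p;\F_p)$, so every differential is $y$‑ and $z$‑linear; the image and kernel of the total differential on each tower are therefore $H^{*}(B\Z/p)$‑submodules, i.e.\ \emph{truncations} of towers. Since the abutment is the $\F_p$‑cohomology of the closed orientable manifold $(U(p)/T)/P$ (the $p$‑cycle, being an even permutation, acts orientation‑preservingly), it is finite‑dimensional and concentrated in degrees $\le p^2-p$; combined with Poincar\'e duality for this manifold, the multiplicative structure, and the identification of $d_2$ with the Bockstein‑type extension data of the Jordan blocks, this should force the precise truncation pattern and hence $E_\infty$. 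Finally one takes $\Z/(p-1)$‑invariants: the generator of $\Z/(p-1)$ scales $y$ and $z$ on $B\Z/p$ and acts correspondingly on $R$, and the surviving invariant classes are exactly those indexed by $S\subseteq\{1,\dots,p-2\}$, in degrees $2p-3+\sum_{i\in S}(2i-1)$ and $2p-2+\sum_{i\in S}(2i-1)$; the exterior factor $\prod_{i=1}^{p-2}(1+t^{2i-1})$ is the shadow of the Jordan‑block degree pattern that survives the invariance condition, and the pair in degrees $2p-3,\,2p-2$ is the bottom of the surviving range.

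The main obstacle is the pair of algebraic inputs highlighted above: the explicit $\F_p[\Z/p]$‑decomposition of the graded coinvariant algebra $R$, and the evaluation of the spectral‑sequence differentials (equivalently, determining which truncations of the towers occur). Everything else --- the reduction to the Sylow subgroup, the passage to $\Z/(p-1)$‑invariants, and the degree bookkeeping --- is routine. As an independent check, one can also compute $H^{*}(BN;\F_p)=H^{*}_{\Sigma_p}\bigl((\mathbb{C}P^\infty)^p;\F_p\bigr)$ by Nakaoka‑type methods and feed it into the Eilenberg--Moore spectral sequence of the fibration $U(p)/N\to BN\to BU(p)$, which should reproduce the same answer.
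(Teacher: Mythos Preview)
This theorem is not proved in the paper at all: it is quoted verbatim from Guerra--Jana \cite{GJ} and used as a black box. So there is no ``paper's own proof'' to compare against. What the paper does reveal about the Guerra--Jana argument is that it runs a spectral sequence converging to $H^\ast(U(p)/N;\F_p)$ which is a module over $H^\ast(BN;\F_p)$, and that the image of $H^\ast(BN)\to H^\ast(U(p)/N)$ is the subalgebra generated by $\alpha_\emptyset$ and $\gamma_\emptyset$ (see the remarks following Lemma~\ref{lem:eulerclass}). That is consistent with, and quite close to, the Eilenberg--Moore route you mention at the end as a cross-check; your primary Cartan--Leray/Sylow approach is a different but equally standard packaging of the same information.

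As a plan your outline is sound, and you have honestly flagged the two genuine obstacles: the graded $\F_p[\Z/p]$-module structure of the coinvariant ring $R$, and the differentials truncating the towers. Two small things to tighten. First, in the reduction to $\Z/(p-1)$-invariants you should separate out the $s=0$ column: free summands $J_p$ contribute nothing in positive filtration but do contribute to $E_2^{0,\ast}=R^{\Z/p}$, and over $\F_p$ the invariants $R^{\Sigma_p}$ (or $R^{\Z/p}$) need not be concentrated in degree~$0$, so the ``stable elements $=$ normaliser-invariants'' shortcut only governs the positive-filtration part. Second, your orientation argument for $(U(p)/T)/P$ is correct (a $p$-cycle is even for odd $p$ and the Weyl group acts on the top class of the coinvariant algebra by sign), but it is worth saying explicitly that this is what makes Poincar\'e duality available to pin down the truncation pattern. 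None of this is a gap in the strategy; it is exactly the bookkeeping you would have to do to turn the plan into a proof.
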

It follows that the mod-$p$ Poincar{\'e} series of $U(p)/N$ is (see \cite[Corollary 6.9]{GJ})
\[
\Pi^p_{U(p)/N}(t)=1+(t^{2p-3}+t^{2p-2})\prod_{k=1}^{p-2} (1+t^{2k-1})\, .
\]
This formula remains valid when $p=2$ in which case $U(2)/N\cong \R P^2$.

To compute the cohomology of $C_n(\mathfrak{u}_p)^+$ we first recall from Lemma \ref{lem:suspension} and Proposition \ref{prop:centresuspension} that
\[
C_n(\mathfrak{u}_p)^+ \cong \Sigma^n C_n(\mathfrak{su}_p)^+\cong \Sigma^n C_n^1(\mathfrak{su}_p)^{\lozenge}\, ,
\]
so it will suffice to compute the cohomology of $C_n^1(\mathfrak{su}_p)$. This will be achieved by an argument similar to the one used in the proof of Lemma \ref{lem:actionmapliealgebra}.

Let $S\subseteq SU(p)$ be a maximal torus with Lie algebra $\mathfrak{s}\subseteq \mathfrak{su}_p$, and write $S(\mathfrak{s}^n)\subseteq \mathfrak{s}^n$ for the unit sphere.

\begin{lemma} \label{lem:reducedconjugationmap}
For every $n\geq 1$ the map defined in (\ref{eq:conjugationmapunitsphere})
\[
\phi' \co SU(p)/S\times_{\Sigma_p} S(\mathfrak{s}^{n}) \to C_{n}^1(\mathfrak{su}_p)
\]
induces an isomorphism on mod-$p$ cohomology.
\end{lemma}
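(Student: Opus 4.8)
The plan is to prove that $\phi'$ is a proper surjection all of whose fibres are connected with vanishing reduced mod-$p$ cohomology, and then to conclude by the Vietoris--Begle mapping theorem (equivalently, by the collapse at $E_2$ of the Leray spectral sequence of $\phi'$, whose sheaf $R^0\phi'_{*}\underline{\F_p}$ is then the constant sheaf and whose higher direct images vanish). Note that the route of Lemma~\ref{lem:actionmapliealgebra} via \cite[Theorem~3.3]{Baird} is unavailable here, since the order $|W|=p!$ of the Weyl group of $SU(p)$ is not invertible in $\F_p$; restricting to the unit sphere is precisely what repairs this, and making that precise is the crux of the proof.

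Properness is automatic because the source $SU(p)/S\times_{\Sigma_p}S(\mathfrak{s}^{n})$ is compact, and surjectivity follows from Proposition~\ref{prop:cartansubalgebra}, together with the observation that rescaling to norm $1$ preserves both commutativity and membership in a Cartan subalgebra. To identify the fibre over a point $\underline{X}=(X_1,\dots,X_n)\in C_n^1(\mathfrak{su}_p)$, note that a point of $\phi'^{-1}(\underline{X})$ is a coset $gS$ with $\underline{X}\in\textnormal{Ad}_g(\mathfrak{s}^{n})$, two such cosets being identified exactly when they differ by an element of $N=N_{SU(p)}(S)$; hence $gS\mapsto\textnormal{Ad}_g(\mathfrak{s})$ identifies $\phi'^{-1}(\underline{X})$ with the space of Cartan subalgebras of $\mathfrak{su}_p$ containing $\underline{X}$. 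The centraliser $Z:=Z_{SU(p)}(\underline{X})$ acts transitively on this space, with the stabiliser of a chosen Cartan subalgebra equal to the normaliser $N_Z(S')$ of the corresponding maximal torus $S'\subseteq Z$, so $\phi'^{-1}(\underline{X})\cong Z/N_Z(S')$, which is connected since $Z$ is.

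The decisive point is that, since $\underline{X}$ lies on the unit sphere, it is nonzero, so some $X_i$ is a nonzero skew-Hermitian traceless matrix and therefore has at least two distinct eigenvalues; the common eigenspace decomposition of $X_1,\dots,X_n$ then has $k\ge 2$ blocks and $Z\cong S\big(U(d_1)\times\cdots\times U(d_k)\big)$ with each $d_i\le p-1$. Its Weyl group $W_Z\cong\Sigma_{d_1}\times\cdots\times\Sigma_{d_k}$ has order $d_1!\cdots d_k!$, every prime factor of which is $<p$; hence $|W_Z|$ is a unit in $\F_p$. Since $Z/N_Z(S')\cong(Z/S')/W_Z$, the transfer argument used in the proof of Lemma~\ref{lem:actionmapliealgebra} applies (now with $\F_p$-coefficients, because $|W_Z|$ is invertible in $\F_p$) and shows that this space has trivial reduced mod-$p$ cohomology. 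As the spaces involved are semialgebraic, hence locally contractible, the Vietoris--Begle theorem now gives that $\phi'^{*}$ is an isomorphism on mod-$p$ cohomology. The main obstacle is exactly this fibre computation together with the recognition that passing to the unit sphere deletes the single problematic fibre — the one over $\underline{X}=0$, namely $SU(p)/N$, which by Theorem~\ref{thm:flagvariety} is emphatically \emph{not} $\F_p$-acyclic — and that every remaining fibre only involves symmetric groups $\Sigma_d$ with $d<p$, for which the prime $p$ is harmless.
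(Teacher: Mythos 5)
Your proof is correct and follows essentially the same route as the paper's: Vietoris--Begle applied to the closed (indeed proper) surjection $\phi'$, with $\F_p$-acyclicity of fibres coming from the fact that the fibre over a nonzero $\underline{X}$ is the quotient of a compact connected group by the normaliser of a maximal torus whose Weyl group has order prime to $p$. The only difference is cosmetic: the paper invokes \cite[Lemma~3.2]{Baird} to write the fibre as $SU(p)_{\underline{X}}^0/N_{SU(p)_{\underline{X}}^0}(S)$ and observes that its Weyl group embeds in the stabiliser in $\Sigma_p$ of $X_1\neq 0$ (a proper Young subgroup), while you identify the centraliser as $S(U(d_1)\times\cdots\times U(d_k))$ with Weyl group $\Sigma_{d_1}\times\cdots\times\Sigma_{d_k}$ via the simultaneous eigenspace decomposition — the same $p'$-order conclusion by a slightly more explicit computation.
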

\begin{proof}
This is an application of the Vietoris-Begle mapping theorem in the form stated, \emph{e.g.}, in \cite[Theorem 2.1]{Baird}. Since $\phi'$ is induced by the restriction of an action map of a compact group, it is closed. As every $(X_1,\dots,X_n)\in C_n^1(\mathfrak{su}_p)$ is contained in a Cartan subalgebra of $\mathfrak{su}_p$, the map $\phi'$ is surjective. Then we need only show that $\phi'$ has $\F_p$-acyclic fibres.

Let $\underline{X}=(X_1,\dots,X_n)\in C_n^1(\mathfrak{su}_p)$. Without loss of generality we may assume that $\underline{X}\in S(\mathfrak{s}^n)$ and that $X_1\neq 0$. It is easy to check (see \cite[Lemma 3.2]{Baird}) that
\[
(\phi')^{-1}(\underline{X})\cong SU(p)_{\underline{X}}^0/N_{SU(p)_{\underline{X}}^0}(S)\,,
\]
where $SU(p)_{\underline{X}}^0$ is the connected component of the identity of the isotropy group of $\underline{X}$ (in fact, these isotropy groups are connected, but we shall not need this here). The Weyl group of $SU(p)_{\underline{X}}^0$ is contained in the isotropy group of $X_1\in \mathfrak{s}$ for the action of $\Sigma_p$ on $\mathfrak{s}$. As $X_1\neq 0$, this isotropy group is a proper Young subgroup of $\Sigma_p$ and thus has order prime to $p$. It follows that $(\phi')^{-1}(\underline{X})$ is $\F_p$-acyclic, by the usual argument that the cohomology of the quotient of a compact connected Lie group by the normaliser of a maximal torus is $k$-acyclic for any field $k$ whose characteristic is prime to the order of the Weyl group.
\end{proof}

Now we consider the fibre bundle
\begin{equation} \label{eq:spherebundle}
S(\mathfrak{s}^n)\to SU(p)/S\times_{\Sigma_p} S(\mathfrak{s}^n) \to (SU(p)/S)/\Sigma_p\,,
\end{equation}
whose fibre is a sphere of dimension $n(p-1)-1$ and whose base may be identified with $U(p)/N$. When $n$ is even, the bundle is orientable. In this case, the mod-$p$ cohomology of the total space, and hence that of $C_n^1(\mathfrak{su}_p)$, can be computed using the Gysin exact sequence
\[
\cdots \to H^k(U(p)/N) \xrightarrow{\cdot \chi_n} H^{k+n(p-1)}(U(p)/N) \to H^{k+n(p-1)}(E)\to H^{k+1}(U(p)/N) \xrightarrow{\cdot \chi_n} \cdots\, .
\]
Here $E=SU(p)/S\times_{\Sigma_p} S(\mathfrak{s}^n)$ is the total space and $\chi_n\in H^{n(p-1)}(U(p)/N)$ the mod-$p$ Euler class of (\ref{eq:spherebundle}). 

Next, we determine $\chi_n$. Let $C_p\subseteq \Sigma_p$ be a Sylow-$p$ subgroup and let $x\in H^2(C_p)\cong \F_p$ be a generator. The inclusion induces an isomorphism
\[
H^{2p-2}(\Sigma_p)\cong H^{2p-2}(C_p)
\]
by which we may view $x^{p-1}$ as a generator of $H^{2p-2}(\Sigma_p)$. Since $\Sigma_p$ is the fundamental group of $U(p)/N$, $x^{p-1}$ determines an element in $H^{2p-2}(U(p)/N)$. Inspection of the proof of Theorem \ref{thm:flagvariety} shows that this element is, up to a unit, the generator $\gamma_\emptyset$ (see \cite[p. 4]{GJ} where $\gamma_{\emptyset}$ is referred to as $\gamma_{1,1}$).

\begin{lemma} \label{lem:eulerclass}
For the mod-$p$ Euler class of (\ref{eq:spherebundle}) we have
\[
\chi_n=\begin{cases} u\gamma_{\emptyset} & \textnormal{if }n=2, \\ 0 & \textnormal{if } n>2 \end{cases}
\]
where $u\in \mathbb{F}_p^\times$.
\end{lemma}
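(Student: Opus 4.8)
The plan is to identify $\chi_n$ with the mod-$p$ Euler class of the vector bundle underlying the sphere bundle (\ref{eq:spherebundle}), to reduce the case of general even $n$ to the case $n=2$ by complexification, and to compute the $n=2$ class by comparison with a Sylow $p$-subgroup of $\Sigma_p$.

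Write $E_n := SU(p)/S \times_{\Sigma_p} \mathfrak{s}^n \to (SU(p)/S)/\Sigma_p \cong U(p)/N$ for the real vector bundle whose sphere bundle is (\ref{eq:spherebundle}); then $\chi_n$ is the mod-$p$ reduction of $e(E_n)$. For $n$ even there is an isomorphism of real $\Sigma_p$-representations $\mathfrak{s}^n \cong \mathfrak{s}\otimes_{\mathbb R}{\mathbb C}^{n/2}$ (the $\Sigma_p$-action being only on the $\mathfrak{s}$-factor), so $E_n$ is the underlying real bundle of $\mathcal{E}^{\oplus n/2}$, where $\mathcal{E} := SU(p)/S\times_{\Sigma_p}(\mathfrak{s}\otimes_{\mathbb R}{\mathbb C})$ is a complex vector bundle of rank $p-1$. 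Since the top Chern class is multiplicative and computes the Euler class of the underlying real bundle, this gives $\chi_n = \bigl(c_{p-1}(\mathcal E)\bmod p\bigr)^{n/2} = \chi_2^{\,n/2}$, where $\chi_2 = c_{p-1}(\mathcal E)\bmod p$.

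Next I would compute $\chi_2$. The covering $SU(p)/S \cong U(p)/T \to U(p)/N$ is the universal cover, classified by a map $f\co U(p)/N\to B\Sigma_p$ inducing the identity on $\pi_1 = \Sigma_p$, and $\mathcal{E} = f^*\mathcal{L}$ for the flat bundle $\mathcal{L} := E\Sigma_p\times_{\Sigma_p}(\mathfrak{s}\otimes_{\mathbb R}{\mathbb C})$ over $B\Sigma_p$. Because $\mathfrak{s}\otimes_{\mathbb R}{\mathbb C}\oplus{\mathbb C}\cong{\mathbb C}^p$ as $\Sigma_p$-representations, $c_{p-1}(\mathcal{L})$ equals the degree-$(2p-2)$ Chern class of the complexified permutation bundle; restricting to a Sylow subgroup $C_p\leq\Sigma_p$ this splits as $\bigoplus_{j=0}^{p-1}L_j$ with $c_1(L_j) = jx$ (where $x\in H^2(C_p;{\mathbb F}_p)$ is the generator used just before the lemma), so $c_{p-1}(\mathcal{L})|_{BC_p} = (p-1)!\,x^{p-1}\equiv -x^{p-1}$ by Wilson's theorem. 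Since restriction to $C_p$ is injective on mod-$p$ cohomology and $H^{2p-2}(B\Sigma_p;{\mathbb F}_p)$ is one-dimensional, spanned by $x^{p-1}$, we get $c_{p-1}(\mathcal{L})\bmod p = -x^{p-1}$; and as $f^*(x^{p-1})$ is a unit multiple of $\gamma_\emptyset$ (the identification recorded just before the lemma, following \cite{GJ}), it follows that $\chi_2 = f^*(c_{p-1}(\mathcal L)\bmod p) = u\gamma_\emptyset$ for some $u\in{\mathbb F}_p^\times$. This proves the case $n=2$.

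For even $n>2$ we then have $\chi_n = u^{n/2}\gamma_\emptyset^{\,n/2}$, so it remains to prove $\gamma_\emptyset^2 = 0$ in $H^*(U(p)/N;{\mathbb F}_p)$. When $p\le 3$ this is immediate from the Poincar\'e series recalled above, since $\gamma_\emptyset^2$ would lie in degree $4p-4$, which exceeds the top nonzero degree $p^2-2p+2$. For $p\ge 5$ the degree count no longer suffices, and establishing $\gamma_\emptyset^2=0$ here is the main obstacle. My approach would be to use that $\gamma_\emptyset$ is a unit multiple of $f^*(x^{p-1})$, and that $x^{p-1}$ is the mod-$p$ Bockstein $\beta$ of the degree-$(2p-3)$ generator of $H^*(B\Sigma_p;{\mathbb F}_p)$, whose $f^*$-image is a nonzero multiple of $\alpha_\emptyset$; hence $\gamma_\emptyset$ is a unit multiple of $\beta(\alpha_\emptyset)$, and the Leibniz rule together with $\beta\gamma_\emptyset=0$ gives $\gamma_\emptyset^2 = c\,\beta(\alpha_\emptyset\gamma_\emptyset)$ for a unit $c$. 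Thus it suffices to show $\alpha_\emptyset\gamma_\emptyset = 0$, i.e.\ that $f^*$ annihilates $x^{p-1}$ times the degree-$(2p-3)$ generator — a statement one reads off the multiplicative structure of $H^*(U(p)/N;{\mathbb F}_p)$ in \cite{GJ}. (Equivalently, since $x^{2p-2} = -\mathscr{P}^1(x^{p-1})$ in $H^*(B\Sigma_p;{\mathbb F}_p)$ one has $\gamma_\emptyset^2 = -\mathscr{P}^1(\gamma_\emptyset)$, so the claim reduces to the vanishing of $\mathscr{P}^1$ on $\gamma_\emptyset$.) Granting this, $\chi_n = u^{n/2}\gamma_\emptyset^{n/2} = 0$ for all even $n>2$, completing the proof; everything other than the vanishing of $\gamma_\emptyset^2$ for $p\ge 5$ — the splitting principle, Wilson's theorem, and the comparison with $C_p$ — is routine.
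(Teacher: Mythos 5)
Your argument matches the paper's in its essential structure: complexify to reduce to $n=2$, restrict to the Sylow $p$-subgroup $C_p\le\Sigma_p$ to compute $\chi_2$ as a unit times $\gamma_\emptyset$, and then reduce $\chi_n=\chi_2^{n/2}=0$ for even $n>2$ to the vanishing of $\gamma_\emptyset^2$ in $H^*(U(p)/N;\F_p)$. The one place you diverge is the final step. You flag $\gamma_\emptyset^2=0$ for $p\ge 5$ as the "main obstacle" and route through a Bockstein/Leibniz argument reducing it to $\alpha_\emptyset\gamma_\emptyset=0$, which you then read off from \cite{GJ}. The paper short-circuits this: the proof of \cite[Theorem 6.8]{GJ} shows that the image of $H^*(BN)\to H^*(U(p)/N)$ is literally the quotient ring $\F_p[\alpha_\emptyset,\gamma_\emptyset]/(\gamma_\emptyset^2,\alpha_\emptyset^2,\alpha_\emptyset\gamma_\emptyset)$, and since this is a subring of $H^*(U(p)/N)$ one gets $\gamma_\emptyset^2=0$ directly, with no Bockstein detour needed. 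Your detour is valid (modulo verifying $\beta\gamma_\emptyset=0$ and the precise unit in $\gamma_\emptyset=c\,\beta(\alpha_\emptyset)$), but both $\gamma_\emptyset^2=0$ and $\alpha_\emptyset\gamma_\emptyset=0$ come from the same source in \cite{GJ}, so the reduction buys nothing. Otherwise correct.
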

\begin{proof}
From the viewpoint of representation theory $\mathfrak{s}$ is the $(p-1)$-dimensional real standard representation of $\Sigma_p$. Since $n$ is even, we have that
\[
\mathfrak{s}^n \cong (\mathfrak{s} \otimes \C)^{n/2}
\]
as $\Sigma_p$-representations. Now $\chi_n$ is the Euler class of the vector bundle over $U(p)/N$ defined by the representation $\mathfrak{s}^n$, thus
\[
\chi_n=\chi_2^{n/2}\, .
\]

To show that $\chi_2$ equals $\gamma_{\emptyset}$ up to a unit, it is enough to show, by the discussion preceding the lemma, that the Euler class in $H^{2p-2}(C_p)$ associated with the restriction of $\mathfrak{s} \otimes \C$ to a $C_p$-representation is $u x^{p-1}$ for some $u\in \F_p^\times$. But it is well-known that this restriction is the direct sum of $p-1$ non-trivial one-dimensional $C_p$-representations, each of which has Euler class a non-zero multiple of the generator $x\in H^2(C_p)$. 

Now suppose $n>2$. To show that $\chi_n=0$  it is enough to show that $\gamma_\emptyset^2=0$ in $H^\ast(U(p)/N)$. The proof of Theorem \ref{thm:flagvariety} shows that the image of the natural map $H^\ast(BN)\to H^\ast(U(p)/N)$ is $\F_p[\alpha_\emptyset, \gamma_\emptyset]/(\gamma_\emptyset^2,\alpha_\emptyset^2,\alpha_\emptyset\gamma_\emptyset)$. Since this is a subring, it follows that $\gamma_\emptyset^2=0$.
\end{proof}

\begin{proof}[Proof of Theorem \ref{thm:commutingvarietymodp}]
We consider the Gysin sequence displayed above. If $n> 2$, then $\chi_n=0$ by Lemma \ref{lem:eulerclass}, and we deduce for all $k\geq 0$
\[
\tilde{H}^k(E)\cong \tilde{H}^{k}(U(p)/N)\oplus H^{k-n(p-1)+1}(U(p)/N)\, .
\]
Thus, by Lemma \ref{lem:reducedconjugationmap} and the remarks preceding it, we obtain for the reduced Poincar{\'e} polynomial $\tilde{\Pi}_{C_n(\mathfrak{u}_p)^+}^p(t):=\Pi_{C_n(\mathfrak{u}_p)^+}^p(t)-1$
\[
\tilde{\Pi}_{C_n(\mathfrak{u}_p)^+}^p(t)=t^{n+1}\, \tilde{\Pi}^p_{E}(t)=t^{n+1}\,\tilde{\Pi}^p_{U(p)/N}(t)+t^{np}\,\Pi_{U(p)/N}^p(t)\, .
\]
Plugging in the Poincar{\'e} series for $U(p)/N$ yields the desired result.
\end{proof}

The situation for $n=2$ is more subtle. The proof of Theorem \ref{thm:flagvariety} given in \cite{GJ} shows that multiplication by the Euler class $\chi_2=\gamma_\emptyset$ annihilates positive degree elements in the $E_\infty$-page of the spectral sequence computing $H^\ast(U(p)/N)$. However, there may be multiplicative extensions in the spectral sequence, which is why we cannot conclude that multiplication by $\chi_2$ annihilates positive degree elements in $H^\ast(U(p)/N)$. However, since $|\gamma_{\emptyset}|=2p-2$ and $|\alpha_{\emptyset}|=2p-3$, the class $\alpha_{\emptyset}\in H^{2p-3}(U(p)/N)$ survives the Gysin sequence showing:
\begin{lemma} \label{lem:degree2p}
For any prime $p$, $H^{2p}(C_2(\mathfrak{u}_p)^+;\F_p)\cong \F_p$.
\end{lemma}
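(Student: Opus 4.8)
The plan is to trace $H^{2p}(C_2(\mathfrak{u}_p)^+;\F_p)$ through the suspension identifications of Appendix~\ref{sec:commutingvariety} down to the sphere bundle (\ref{eq:spherebundle}), and then to extract it from the Gysin sequence of that bundle.

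\emph{Reduction to the flag bundle.} Since $\mathfrak{u}_p=\mathfrak{z}\oplus\mathfrak{su}_p$ with $\dim\mathfrak{z}=1$, Proposition~\ref{prop:centresuspension} gives $C_2(\mathfrak{u}_p)^+\cong\Sigma^2 C_2(\mathfrak{su}_p)^+$, and Lemma~\ref{lem:suspension} identifies $C_2(\mathfrak{su}_p)^+$ with the unreduced suspension $C_2^1(\mathfrak{su}_p)^{\lozenge}$. As $C_2^1(\mathfrak{su}_p)$ is path-connected (by the argument in the proof of Proposition~\ref{prop:simplyconnected}), its unreduced suspension has the reduced cohomology of $\Sigma C_2^1(\mathfrak{su}_p)$, so
\[
\tilde{H}^{2p}(C_2(\mathfrak{u}_p)^+)\cong\tilde{H}^{2p-3}(C_2^1(\mathfrak{su}_p)).
\]
By Lemma~\ref{lem:reducedconjugationmap} the right-hand side is isomorphic to $H^{2p-3}(E)$, where $E=SU(p)/S\times_{\Sigma_p}S(\mathfrak{s}^2)$ is the total space of the bundle (\ref{eq:spherebundle}) with $n=2$: an oriented $S^{2p-3}$-bundle over $U(p)/N$ whose $\F_p$-Euler class is $\chi_2\in H^{2p-2}(U(p)/N)$.

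\emph{The Gysin sequence.} Writing $B=U(p)/N$, the fibre dimension of $E\to B$ is $2p-3$, so the segment of the Gysin sequence around total degree $2p-3$ reads
\[
0=H^{-1}(B)\xrightarrow{\cup\chi_2}H^{2p-3}(B)\xrightarrow{\pi^\ast}H^{2p-3}(E)\longrightarrow H^0(B)\xrightarrow{\cup\chi_2}H^{2p-2}(B).
\]
By Lemma~\ref{lem:eulerclass} we have $\chi_2=u\gamma_\emptyset$ with $u\in\F_p^\times$, and $\gamma_\emptyset$ is a nonzero element of $H^{2p-2}(B)$ by Theorem~\ref{thm:flagvariety}; hence cup product with $\chi_2$ sends the generator of $H^0(B)$ to a nonzero class and is injective. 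Exactness then forces $\pi^\ast\colon H^{2p-3}(B)\xrightarrow{\ \cong\ }H^{2p-3}(E)$.

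\emph{Conclusion.} Finally, Theorem~\ref{thm:flagvariety} shows that the only generator of $H^\ast(U(p)/N;\F_p)$ in degree $2p-3$ is $\alpha_\emptyset$: every $\alpha_S$ or $\gamma_S$ with $S\neq\emptyset$ lies in degree at least $2p-2$, and $\gamma_\emptyset$ lies in degree $2p-2$. Hence $H^{2p-3}(U(p)/N)\cong\F_p$, and unwinding the identifications above yields $H^{2p}(C_2(\mathfrak{u}_p)^+;\F_p)\cong\F_p$; for $p=2$ one has $U(2)/N\cong\R P^2$ and the same computation applies. The only place where genuine geometric input enters, as opposed to bookkeeping of degrees through suspension isomorphisms, is the non-vanishing of the Euler class $\chi_2$, which is exactly Lemma~\ref{lem:eulerclass}; I do not anticipate any further obstacle.
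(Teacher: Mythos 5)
Your proof is correct and follows the same route as the paper's: the paper's justification is the terse remark preceding the lemma statement, namely that since $|\alpha_\emptyset|=2p-3 < 2p-2=|\gamma_\emptyset|$ and the Euler class $\chi_2=u\gamma_\emptyset$ is nonzero, the class $\alpha_\emptyset$ survives in the Gysin sequence of (\ref{eq:spherebundle}). You have simply spelled out the chain of suspension identifications and both exactness arguments (injectivity of $\pi^\ast$ from $H^{-1}(B)=0$, surjectivity from injectivity of $\cup\chi_2\colon H^0(B)\to H^{2p-2}(B)$) that the paper leaves implicit.
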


\begin{remark}
The mod-$p$ cohomology of $C_n(\mathfrak{u}_p)^+$ for odd $n$ is not computed as easily; in this case there is no Euler class and the differentials in the Serre spectral sequence associated with (\ref{eq:spherebundle}) must be determined by other means. These differentials are non-trivial as can be seen in the simplest case $n=1$.
\end{remark}

\end{document}